\newtheorem{theorem} {Theorem}
\newtheorem{lemma} {Lemma}
\newtheorem{dnt} {Definition}
\newtheorem{claim} {Claim}
\newtheorem{cor} {Corollary}
\newtheorem{prop} {Proposition}
\newtheorem{obs}{Observation}
\newtheorem{con} {Conjecture}
\newtheorem{open problem} {Open Problem}
\numberwithin{lemma}{section}
\numberwithin{theorem}{section}
\numberwithin{cor}{section}
\numberwithin{prop}{section}
\numberwithin{con}{section}
\numberwithin{claim}{section}
\numberwithin{obs}{section}
\numberwithin{dnt}{section}
\title{$L(2,1)$-Labeling of the iterated Mycielski of graphs and some related to matching problems}
\author{
	Kamal Dliou \\
	National School of Applied Sciences(ENSA)\\
	Ibn Zohr University\\
	B.P 1136, Agadir, Morocco \\
	\texttt{dlioukamal@gmail.com}
		\And
		Hicham El Boujaoui \\
		National School of Applied Sciences(ENSA)\\
		Ibn Zohr University\\
		B.P 1136, Agadir, Morocco \\
		\texttt{h.elboujaoui@uiz.ac.ma} \\
	
	\And
		Mustapha Kchikech \\
		Modeling and combinatorial laboratory\\
		Polydisciplinary faculty of Safi, Cadi Ayyad University
\\
		Sidi Bouzid, B.P. 4162 - 46000, Safi, Morocco \\
		\texttt{m.kchikech@uca.ac.ma} \\

}
\begin{document}

\maketitle

\begin{abstract}
In this paper, we study the $L(2, 1)$-Labeling of the Mycielski and the iterated Mycielski of graphs in general. For a graph $G$ and all $t\geq 1$, we give sharp bounds for $\lambda(M^t(G))$ the $L(2, 1)$-labeling number of the $t$-th iterated Mycielski in terms of the number of iterations $t$, the order $n$, the maximum degree $\bigtriangleup$, and $\lambda(G)$ the $L(2, 1)$-labeling number of $G$. For $t=1$, we present necessary and sufficient conditions between the $4$-star matching number of the complement graph and $\lambda(M(G))$ the $L(2, 1)$-labeling number of the Mycielski of a graph, with some applications to special graphs. For all $t\geq 2$, we prove that for any graph $G$ of order $n$, we have $2^{t-1}(n+2)-2\leq \lambda(M^t(G))\leq 2^{t}(n+1)-2$. Thereafter, we characterize the graphs achieving the upper bound $2^t(n+1)-2$, then by using the Marriage Theorem and Tutte's characterization of graphs with a perfect $2$-matching, we characterize all graphs without isolated vertices achieving the lower bound $2^{t-1}(n+2)-2$. We determine the $L(2, 1)$-labeling number for the Mycielski and the iterated Mycielski of some graph classes.
\end{abstract}

\keywords{Frequency assignment\and $L(2,1)$-Labeling \and Mycielski construction \and Matching}

\section{Introduction}
\par The graphs considered in this paper are finite, simple, and undirected. For graph terminology, we refer to~\cite{dougla}.
\par In 1992, J.R. Griggs and R.K. Yeh \cite{griggs} studied a variation of the frequency assignment problem \cite{hale}, where close transmitters must receive different channels and closer transmitters must receive different channels at least two apart. This problem is known as the $L(2,1)$-Labeling problem, the main target is to come up with a frequency assignment with low-frequency bandwidth.
\par Formally, the $L(2, 1)$-\textit{labeling} of a graph $G=(V,E)$, is a function $f$ from the vertex set $V$ to the set of all nonnegative integers, such that $|f(x)-f(y)|\geq 2$ if $d_G(x, y)=1$ and $|f(x)-f(y)|\geq 1$ if $d_G(x, y)=2$, where $d_G(x, y)$ is the distance between the vertices $x$ and $y$ in $G$. The \textit{span} of an $L(2,1)$-labeling  $f$ is the difference between the largest and the smallest label used by $f$. We may always consider zero as the smallest label used, so that the span is the highest label assigned. A $k$-$L(2,1)$-\textit{labeling} is an $L(2, 1)$-labeling with no label greater than $k$, the minimum $k$ so that $G$ has a $k$-$L(2, 1)$-labeling is called the $L(2,1)$-\textit{labeling number} or $\lambda$-\textit{number} of $G$, and denoted by  $\lambda(G)$. An $L(2, 1)$-labeling with span $\lambda(G)$ is called a $\lambda$-\textit{labeling}. 
\par The $L(2,1)$-labeling has been extensively studied (see surveys \cite{tiziana,yeh}). The determination of the exact value of $\lambda(G)$ is an NP-Hard problem for graphs in general, it is NP-Complete to determine whether a graph admits an $L(2,1)$-labeling with span at most $\lambda \geq 4$ \cite{fiala}, the problem remains NP-Complete even restricted to some graph families (see NP-completeness results references in \cite{tiziana}). Therefore, the aim of the research was to bound the $\lambda$-number for graphs. By using the greedy algorithm, Griggs and Yeh \cite{griggs} proved that  $\lambda(G)\leq \bigtriangleup^2+2\bigtriangleup$ for any graph $G$, where $\bigtriangleup$ is the maximum degree of $G$. This upper bound was later improved by Gon\c{c}alves  in \cite{goncal} to $\bigtriangleup^2+\bigtriangleup-2$, and it is the best known upper bound for $\lambda(G)$ in terms of the maximum degree for graphs in general. Griggs and Yeh \cite{griggs} conjectured that $\lambda(G)\leq \bigtriangleup^2$,   for any graph $G$ with $\bigtriangleup\geq 2$, it is called $\bigtriangleup^2$-conjecture and is one of the most captivating open problems about graph labeling with distance conditions. This conjecture was proven to be true by Havet et al. \cite{havet} for graphs with a large maximum degree. The $L(2,1)$-labeling number attracted attention not only for general graphs but also when considering specific graph classes. The decision version of the $L(2, 1)$-labeling problem has been proven to be polynomial for complete graphs, paths, cycles, wheels, trees, complete $k$-partite graphs, among other few graph classes. For an overview on the subject of the $L(2,1)$-labeling (and its generalizations), we refer the reader to the surveys \cite{tiziana,yeh}.
\par In this paper, we investigate the $L(2, 1)$-labeling of the Mycielski and the iterated Mycielski of graphs. In search of triangle-free graphs with a large chromatic number, Mycielski \cite{mycielski} used the following transformation.
\begin{dnt}\label{def1}
	For a given graph $G=(V, E)$ of order $n$ with $V=\{v_1,v_2,\ldots,v_n\}$. The Mycielski graph of $G$,  denoted $M(G)$, is the graph with vertex set $V\cup V'\cup \{u\}$, where $V'=\{v'_i : v_i \in V\}$ and edge set $E \cup \{v_iv'_j : v_iv_j \in E\} \cup \{v'_iu : v'_i \in V'\}$. The vertex $v'_i$ is called the copy of the vertex $v_i$ and $u$ is called the root of $M (G)$.	
\end{dnt}
The $t$-th iterated Mycielski graph of $G$, denoted $M^t (G)$, is defined recursively with $M^0(G)=G$ and for $t\geq 1$ $M^t(G)=M(M^{t-1}(G))$. If $t=1$, $M^1(G)$ is the Mycielski graph of $G$ and is denoted simply $M(G)$.
It is known that $\chi(M(G))=\chi(G)+1$, and $\omega(M (G))=max(2,\omega(G))$, for any graph $G$, where $\chi(G)$ and $\omega(G)$ are respectively the chromatic number and the clique number of $G$. Many aspects and invariants of the Mycielski graphs have been studied (see for example \cite{boro,cara,changhuang,fisher,larsen,lin,shao}), Mycielski graphs are known to be hard-to-color instances and are used for testing coloring algorithms \cite{cara}. The $L(2,1)$-labeling of the Mycielski of graphs has been previously investigated in \cite{lin} and \cite{shao}. A $4$-\textit{star matching}  $H$ of a graph $G$ is a subgraph such that $H$ is a collection of vertex disjoint star graphs $K_{1,1}$, $K_{1,2}$, $K_{1,3}$ or $K_{1,4}$. The $4$-\textit{star matching number} is the maximum order of a $4$-star matching of $G$. In \cite{lin}, W. Lin and P. Lam gave sufficient conditions on the $4$-star matching number of the complement graph $\overline{G}$, so that $\lambda(M(G))\leq 2n$ and $\lambda(M(G))=2n+k$, for any $k\geq 1$. This allows them to prove that $\lambda(M(G))$ can be computed in polynomial time for graphs with diameter at most $2$, and then give the $\lambda$-number of the Mycielski of complete graph $K_n$, and the Mycielski of the graph join of complete graph and the empty graph. Z. Shao and R. Solis-Oba in \cite{shao}, also studied the $L(2,1)$-labeling number of the Mycielski and the iterated Mycielski of graphs. The authors as well gave the $\lambda$-number of the Mycielski of complete graph, and depending on the number of iterations determine  the exact value or give bounds for $\lambda(M^t(K_n))$, then provided bounds for $\lambda(M^t(G))$ for any graph~$G$. \par
In this paper, we continue the work started by Lin and Lam \cite{lin}, and Shao and Solis-Oba \cite{shao}.
In Section~\ref{sec2}, we give some preliminary results about the Mycielski and iterated Mycielski of graphs, and some previous results on the $L(2,1)$-labeling number of graphs.
\par Section~\ref{sec3} is dedicated to  the $L(2,1)$-labeling number of $M(G)$. First, we provide bounds involving the order $n$, the maximum degree $\bigtriangleup$ and the $\lambda$-number of $G$. Then we complete the equivalence relationship between the $4$-star matching number and the $L(2,1)$-labeling number of the Mycielski of a graph. Afterward, we give applications of this result to the $L(2,1)$-labeling number of the Mycielski of some particular graphs, not mentioned in \cite{lin}. The end of Section~\ref{sec3} is dedicated to graphs with a lower bound  $\lambda(M(G))=n+1$, we give a condition for a graph implying that $\lambda(M(G))=n+1$. Then we determine the $L(2,1)$-labeling number of $M(P_n)$ and $M(C_n)$ the Mycielski graph of path and cycle respectively, which allow us to determine all the connected graphs realizing $\lambda(M(G))$ equal to $4$, $6$ and $7$ respectively. 
\par Section~\ref{sec4} is devoted to the $t$-th iterated Mycielski of graphs with $t\geq 2$. As in Section~\ref{sec3}, we give bounds for $\lambda(M^t(G))$ in terms of the number of iterations $t$, the order,  the maximum degree, and $\lambda(G)$. Then we show that for all $t\geq 2$, $\lambda(M^t(K_n))=|M^t(K_n)|-1=2^t(n+1)-2$, then we characterize all graphs having $\lambda(M^t(G))=|M^t(G)|-1=2^t(n+1)-2$. Later, we give a necessary and sufficient condition for any graph $G$ without isolated vertices achieving a lower bound $2^{t-1}(n+2)-2$ for the $\lambda$-number of the iterated Mycielski of $G$, we apply that to get an upper bound that can be calculated in polynomial time for any graph $G$, then we determine $\lambda(M^t(P_n))$, and $\lambda(M^t(C_n))$. Finally, we propose a weak version of the $\bigtriangleup^2$-conjecture for the $L(2,1)$-labeling of the Mycielski and iterated Mycielski of graphs. 
\section{Preliminaries and previous results}
\label{sec2}
\par For a graph $G$, let $\bigtriangleup_{M^t}$, $deg_{M^t}(x)$, and $d_{M^t}(x,y)$ denote  respectively, the maximum degree, the degree of a vertex $x$, and the distance between the vertices $x$ and $y$ in $M^t(G)$. If $t=1$, we denote simply $\bigtriangleup_M$, $deg_M(x)$, and $d_M(x,y)$. As a consequence of Definition ~\ref{def1}, we have the following.
\begin{lemma}\label{lem2.1}
	If $G$ is a graph of order $n$, then  $|M^t(G)|=2^t(n+1)-1$.
\end{lemma}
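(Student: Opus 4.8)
The plan is to reduce the claim to a one-step recurrence coming directly from Definition~\ref{def1} and then solve that recurrence. First I would observe that for any graph $H$ of order $m$, the Mycielski construction $M(H)$ has exactly $2m+1$ vertices: the vertex set is $V(H)\cup V(H)'\cup\{u\}$, where $V(H)'$ is a disjoint copy of $V(H)$ (so $|V(H)'|=m$) and $u$ is a single new root vertex, and these three sets are pairwise disjoint. Hence $|M(H)|=m+m+1=2m+1$.

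Applying this with $H=M^{t-1}(G)$ and writing $a_t:=|M^t(G)|$, the recursive definition $M^t(G)=M(M^{t-1}(G))$ yields the recurrence
\begin{equation*}
a_t=2a_{t-1}+1,\qquad a_0=|M^0(G)|=|G|=n.
\end{equation*}
The cleanest way to finish is to note that this recurrence linearizes after the substitution $b_t:=a_t+1$, which satisfies $b_t=2b_{t-1}$ with $b_0=n+1$; therefore $b_t=2^t(n+1)$ and $a_t=2^t(n+1)-1$, as required. Equivalently, I could run a direct induction on $t$: the base case $t=0$ gives $2^0(n+1)-1=n$, and assuming $a_{t-1}=2^{t-1}(n+1)-1$ gives $a_t=2\bigl(2^{t-1}(n+1)-1\bigr)+1=2^t(n+1)-1$.

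There is no real obstacle here; the only point requiring care is the very first step, namely verifying from Definition~\ref{def1} that the three blocks $V(H)$, $V(H)'$, and $\{u\}$ are genuinely disjoint so that no vertices are double-counted, which is what makes the order multiply (approximately) by $2$ and add $1$ at each iteration. Once the recurrence $a_t=2a_{t-1}+1$ is in hand, the formula follows by the routine telescoping argument above.
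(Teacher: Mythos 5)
Your proposal is correct and follows essentially the same route as the paper: both establish from Definition~\ref{def1} that one application of the Mycielski construction sends an order-$m$ graph to an order-$(2m+1)$ graph, and then iterate, the paper by a brief induction and you by the equivalent telescoping of the recurrence $a_t=2a_{t-1}+1$. Your added care about the disjointness of $V(H)$, $V(H)'$, and $\{u\}$ is a fair point to make explicit, but there is no substantive difference in the argument.
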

\begin{proof} 
	From Definition ~\ref{def1}, we have $|M(G)|=2n+1=2(n+1)-1$. By using induction, we can show that $|M^t(G)|=2^t(n+1)-1$.
\end{proof}
\begin{obs}\label{lem2.2}
	If $H$ is a subgraph of a graph $G$, then for any $t\geq 1$, $M^t(H)$ is a subgraph of $M^t(G)$. 
\end{obs}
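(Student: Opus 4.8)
The plan is to prove the statement by induction on the number of iterations $t$, with the base case $t=1$ carrying the only real content and the inductive step following by a clean reduction to that base case.

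For the base case $t=1$, I would start from a subgraph $H=(V_H,E_H)$ of $G=(V_G,E_G)$, so that $V_H\subseteq V_G$ and $E_H\subseteq E_G$. Using Definition~\ref{def1}, the vertex set of $M(H)$ is $V_H\cup V_H'\cup\{u_H\}$ and that of $M(G)$ is $V_G\cup V_G'\cup\{u_G\}$. The first step is to set up the natural embedding: a vertex $v_i\in V_H$ maps to the same vertex $v_i\in V_G$, its copy $v_i'\in V_H'$ maps to the copy $v_i'\in V_G'$ (this is legitimate because $V_H\subseteq V_G$ forces $V_H'\subseteq V_G'$), and the root $u_H$ is identified with the root $u_G$. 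Under this identification the vertex set of $M(H)$ is a subset of the vertex set of $M(G)$.

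Next I would verify that every edge of $M(H)$ is carried to an edge of $M(G)$, checking the three edge types of the Mycielski construction separately. An original edge $v_iv_j\in E_H$ lies in $E_G$, hence in $M(G)$; a cross edge $v_iv_j'$ of $M(H)$ arises from some $v_iv_j\in E_H\subseteq E_G$, so $v_iv_j'$ is also a cross edge of $M(G)$; and a spoke edge $v_i'u_H$ of $M(H)$ comes from $v_i'\in V_H'\subseteq V_G'$, so $v_i'u_G$ is a spoke edge of $M(G)$, which agrees with $v_i'u_H$ after identifying the roots. This establishes $M(H)\subseteq M(G)$ and completes the base case.

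For the inductive step, I would assume $M^{t-1}(H)$ is a subgraph of $M^{t-1}(G)$ and apply the base case with $M^{t-1}(H)$ and $M^{t-1}(G)$ in the roles of $H$ and $G$, concluding that $M^t(H)=M\bigl(M^{t-1}(H)\bigr)$ is a subgraph of $M\bigl(M^{t-1}(G)\bigr)=M^t(G)$. The argument is entirely structural, so I do not expect a genuine obstacle; the only point requiring care is the bookkeeping of the three vertex types (originals, copies, root) and the consistent identification of the roots at each level, which must be stated explicitly to make the edge-preservation checks unambiguous.
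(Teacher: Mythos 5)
Your proof is correct: the natural embedding (originals to originals, copies to copies, roots identified) preserves all three edge types of the Mycielski construction, and the induction on $t$ then goes through without issue. The paper gives no proof at all for this statement---it is recorded as an Observation deemed immediate from Definition~1---and your argument is precisely the routine verification the paper leaves implicit, so there is nothing to contrast.
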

\begin{lemma}\label{lem2.3}
	For a graph $G$ of order $n$ and maximum degree $\bigtriangleup$. For any $t\geq1$, we have $\bigtriangleup_{M^t}=max(2^{t-1}(n+1)-1, 2^t\bigtriangleup)$.
\end{lemma}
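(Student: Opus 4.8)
The plan is to compute the maximum degree of $M^t(G)$ by tracking how the degree of each type of vertex evolves under the Mycielski construction and then take the maximum over all vertices. Recall from Definition~\ref{def1} that $M(G)$ has three classes of vertices: the original vertices $v_i \in V$, their copies $v'_i \in V'$, and the single root $u$. The plan is to write down the degree of each class in $M(G)$ in terms of degrees in $G$, then iterate.

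**First I would** record the base relations. For an original vertex $v_i$, its neighbors in $M(G)$ are its $G$-neighbors together with the copies $v'_j$ of those neighbors, so $deg_M(v_i) = 2\,deg_G(v_i)$. For a copy vertex $v'_i$, its neighbors are the $G$-neighbors $v_j$ of $v_i$ plus the root $u$, giving $deg_M(v'_i) = deg_G(v_i) + 1$. Finally the root $u$ is adjacent to all of $V'$, so $deg_M(u) = n = |M(G)| - (n+1) = 2(n+1)-1 - (n+1)$; more usefully, $deg_M(u)$ equals the order of $G$. Taking the maximum, and noting $2\,deg_G(v_i) \le 2\bigtriangleup$ while the copies contribute at most $\bigtriangleup+1 \le 2\bigtriangleup$ (for $\bigtriangleup\ge 1$), the dominant competitors are $2\bigtriangleup$ from the original vertices and $n$ from the root. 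This establishes $\bigtriangleup_M = \max(n, 2\bigtriangleup)$, which matches the claimed formula at $t=1$ since $2^{0}(n+1)-1 = n$.

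**Then I would** set up the induction on $t$. Applying the construction to $M^{t-1}(G)$, which by Lemma~\ref{lem2.1} has order $N_{t-1} := |M^{t-1}(G)| = 2^{t-1}(n+1)-1$, the same three-class analysis gives: original-type vertices of $M^t(G)$ have degree $2\,deg_{M^{t-1}}(x)$, so the largest is $2\bigtriangleup_{M^{t-1}}$; copy-type vertices have degree $deg_{M^{t-1}}(x)+1 \le \bigtriangleup_{M^{t-1}}+1$; and the new root has degree $N_{t-1} = 2^{t-1}(n+1)-1$. Invoking the induction hypothesis $\bigtriangleup_{M^{t-1}} = \max(2^{t-2}(n+1)-1,\ 2^{t-1}\bigtriangleup)$, the doubling term becomes $2\bigtriangleup_{M^{t-1}} = \max(2^{t-1}(n+1)-2,\ 2^{t}\bigtriangleup)$. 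The whole maximum is therefore $\max\bigl(2^{t-1}(n+1)-2,\ 2^{t}\bigtriangleup,\ \bigtriangleup_{M^{t-1}}+1,\ 2^{t-1}(n+1)-1\bigr)$.

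**The main obstacle I expect** is a bookkeeping one rather than a conceptual one: verifying that the new root's degree $2^{t-1}(n+1)-1$ and the doubled term $\max(2^{t-1}(n+1)-2, 2^t\bigtriangleup)$ collapse to the stated $\max(2^{t-1}(n+1)-1,\ 2^t\bigtriangleup)$. The point is that $2^{t-1}(n+1)-1$ strictly dominates $2^{t-1}(n+1)-2$, and that the copy-type contribution $\bigtriangleup_{M^{t-1}}+1$ is always absorbed (it never beats the root term, since $\bigtriangleup_{M^{t-1}} \le N_{t-1}-1 = 2^{t-1}(n+1)-2$ for any graph on $N_{t-1}$ vertices, so $\bigtriangleup_{M^{t-1}}+1 \le 2^{t-1}(n+1)-1$). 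Once these comparisons are checked, the four-term maximum simplifies exactly to $\max(2^{t-1}(n+1)-1,\ 2^t\bigtriangleup)$, completing the induction. I would take care to confirm the copy-term bound holds even in the degenerate cases (e.g.\ $\bigtriangleup$ small or $n$ small) so that no edge case breaks the simplification.
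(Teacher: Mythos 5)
Your proposal is correct and takes essentially the same route as the paper: induction on $t$, using the three degree formulas of the Mycielski construction ($deg_M(x)=2\,deg_{M^{t-1}}(x)$ for original vertices, $deg_M(x')=deg_{M^{t-1}}(x)+1$ for copies, and $deg_M(u)=|M^{t-1}(G)|=2^{t-1}(n+1)-1$ for the new root), then comparing the three contributions. The only difference is cosmetic: the paper splits the inductive step into two cases according to which term of $\max\bigl(2^{t-2}(n+1)-1,\ 2^{t-1}\bigtriangleup\bigr)$ dominates, whereas you absorb the copy-vertex term uniformly via $\bigtriangleup_{M^{t-1}}+1\leq 2^{t-1}(n+1)-1$, which avoids the case analysis and handles the degenerate cases in one stroke.
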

\begin{proof} 
	By  Definition ~\ref{def1}, we have $deg_{M}(u)=n$, $deg_{M}(x)=2deg_G(x)$, and $deg_{M}(x')=deg_G(x)+1$ for all $x \in V$, where $x'$ is the copy of the vertex $x$ in $M(G)$. Then $\bigtriangleup_{M}=max(n, 2\bigtriangleup)$. 	Suppose that for $k\geq 1$, we have $\bigtriangleup_{M^k}=max(2^{k-1}(n+1)-1, 2^k\bigtriangleup)$. \par
	For $k+1$, if $2^{k-1}(n+1)-1 \geq 2^k\bigtriangleup$, then $\bigtriangleup_{M^k}=2^{k-1}(n+1)-1$. Let $v$ be a vertex of  $M^k(G)$, such that $deg_{M^k}(v)=\bigtriangleup_{M^k}$. From Definition ~\ref{def1} $deg_{M^{k+1}}(v)=2deg_{M^k}(v)=2^k(n+1)-2 \geq deg_{M^{k+1}}(x)$, for all $x \in V_{M^k} \cup V'_{M^k} $. Also  $deg_{M^{k+1}}(u^{k+1})=|M^k(G)|=2^k(n+1)-1 > deg_{M^{k+1}}(v)$, where $u^{k+1}$ is the root of $M^{k+1}(G)$. So $\bigtriangleup_{M^{k+1}}=deg_{M^{k+1}}(u^{k+1})=2^k(n+1)-1$.
	\par  Otherwise, if $2^k\bigtriangleup \geq 2^{k-1}(n+1)$, then by the inductive hypothesis $\bigtriangleup_{M^k}=2^k\bigtriangleup$.  We have $deg_{M^{k+1}}(x)=2deg_{M^k}(x)\leq 2^{k+1} \bigtriangleup$, for all $x \in V_{M^k}$. For $x' \in V'_{M^k}$, $deg_{M^{k+1}}(x')=deg_{M^k}(x)+1 \leq 2^k\bigtriangleup+1\leq 2^{k+1} \bigtriangleup $. Also $ deg_{M^{k+1}}(u^{k+1})=2^k(n+1)-1<2^{k+1} \bigtriangleup$. Thus, $\bigtriangleup_{M^{k+1}}=2^{k+1} \bigtriangleup$. It follows that $\bigtriangleup_{M^{k+1}}=max(2^k(n+1)-1,2^{k+1}\bigtriangleup)$.	
\end{proof}
Notice that $M(G)$ is a connected graph if and only if $G$ has no isolated vertices. The \textit{diameter} of a graph $diam(G)$, is the greatest distance between any pair of vertices in $G$. If $G$ is disconnected, then $diam(G)$ is considered to be infinite. In \cite{fisher}, D.C Fisher et al. proved that $diam(M(G))=min(max(2, diam(G)), 4)$, for every graph $G$ without isolated vertices. The following lemmas are a consequence of the proof of this result and the definition of $M(G)$.
\begin{lemma}\label{lem2.4} \cite{fisher}
	For $v_i$ and $v_j$ two non-isolated vertices in $G$. We have
	$d_{M}(u,v'_i)=1$, $d_{M}(u,v_i)=2$, $d_{M}(v'_i,v'_j)=2$, $d_{M}(v_i,v'_i)=2$, $d_{M}(v_i,v'_j)=min(3,d(v_i,v_j))$, and $d_{M}(v_i,v_j)=min(4,d(v_i,v_j))$.	
\end{lemma}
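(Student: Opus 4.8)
The plan is to read off the adjacency structure of $M(G)$ directly from Definition~\ref{def1} and then handle each of the six distance claims by exhibiting a short path (for the upper bound) together with an obstruction argument (for the lower bound). The adjacencies I would record first are: $u$ is adjacent to every copy $v'_i$ and to nothing else; the copy set $V'$ is independent; $v_i$ is adjacent to $v_j$ exactly when $v_iv_j\in E$; $v_i$ is adjacent to $v'_j$ exactly when $v_iv_j\in E$; in particular $v_i$ is never adjacent to $u$, and $v'_i$ is never adjacent to $v_i$ (there are no loops in $G$). I will use repeatedly that a non-isolated $v_i$ supplies a neighbour $v_k$ with $v_iv_k\in E$, hence the edges $v_iv_k$ and $v_iv'_k$ in $M(G)$.

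First I would dispose of the three constant distances. The equality $d_{M}(u,v'_i)=1$ is immediate. For $d_{M}(u,v_i)$, note that $u$ is not adjacent to $v_i$, while a neighbour $v_k$ of $v_i$ gives the path $u\,v'_k\,v_i$, so the distance is $2$. For $d_{M}(v'_i,v'_j)$ with $i\neq j$, the two copies are non-adjacent but share the neighbour $u$, giving $2$. For $d_{M}(v_i,v'_i)$, these are non-adjacent, and a neighbour $v_k$ of $v_i$ is adjacent to both $v_i$ and $v'_i$, so the distance is again $2$; here the non-isolation hypothesis is exactly what is used.

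For the two $\min$ formulas I would separate the upper and lower bounds. The upper bounds come from explicit paths: when $d(v_i,v_j)\le 3$ the shortest $G$-path between $v_i$ and $v_j$ already lies inside $M(G)$, giving $d_{M}(v_i,v_j)\le d(v_i,v_j)$, and when $d(v_i,v_j)\le 2$ the same path gives $d_{M}(v_i,v'_j)\le d(v_i,v_j)$ (the last edge $v_kv_j$ yields $v_kv'_j$). For large or infinite $G$-distance I route through the root: choosing neighbours $v_a$ of $v_i$ and $v_b$ of $v_j$ produces $v_i\,v'_a\,u\,v'_j$ of length $3$ and $v_i\,v'_a\,u\,v'_b\,v_j$ of length $4$, so $d_{M}(v_i,v'_j)\le 3$ and $d_{M}(v_i,v_j)\le 4$ in every case.

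The heart of the argument, and the step I expect to be the only genuine obstacle, is the matching lower bound, i.e.\ showing that the root cannot create an unexpected shortcut. I would introduce the projection $\phi$ on $V(M(G))\setminus\{u\}$ defined by $\phi(v_i)=\phi(v'_i)=v_i$, and check that every edge of $M(G)$ not incident to $u$ is sent by $\phi$ to an edge of $G$. The key observation is that a path realizing $d_{M}(v_i,v'_j)\le 2$ or $d_{M}(v_i,v_j)\le 3$ cannot pass through $u$: since $u$ is adjacent only to copies while $v_i,v_j\in V$, the vertex $u$ can be neither a neighbour of an endpoint lying in $V$ nor, by a short check on the possible path length, an interior vertex. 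Hence such a path avoids $u$ entirely and projects under $\phi$ to a walk of the same length in $G$ between $v_i$ and $v_j$, forcing $d(v_i,v_j)\le 2$ (respectively $\le 3$), which contradicts the hypothesis in the remaining cases. Combining the upper and lower bounds then yields $d_{M}(v_i,v'_j)=\min(3,d(v_i,v_j))$ and $d_{M}(v_i,v_j)=\min(4,d(v_i,v_j))$.
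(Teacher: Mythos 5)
Your proof is correct, and in fact it supplies something the paper itself omits: the paper states this lemma with a citation to Fisher et al.\ \cite{fisher}, remarking only that it follows from their proof of $diam(M(G))=min(max(2,diam(G)),4)$ and the definition of $M(G)$, so there is no in-paper proof to compare against. Your argument is the natural self-contained verification. The three constant distances follow exactly as you say from the adjacency table of $M(G)$ and the non-isolation hypothesis; the upper bounds in the two $\min$ formulas come from lifting shortest $G$-paths into $M(G)$ and routing through the root via $v_i\,v'_a\,u\,v'_j$ and $v_i\,v'_a\,u\,v'_b\,v_j$; and the one genuinely delicate point, that the root cannot create shortcuts, is handled cleanly by your projection $\phi(v_i)=\phi(v'_i)=v_i$, which sends every edge of $M(G)$ not incident to $u$ to an edge of $G$, combined with the length count showing that any path through $u$ between $v_i\in V$ and $v'_j$ has length at least $3$, and between $v_i,v_j\in V$ has length at least $4$ (the two path-neighbours of $u$ must be copies, and $u$ is adjacent to no vertex of $V$). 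Two minor points worth making explicit if this were written out in full: the two $\min$ formulas are asserted only for $i\neq j$, the case $i=j$ being the separate claim $d_M(v_i,v'_i)=2$; and the image under $\phi$ of a $u$-avoiding path is a walk in $G$ rather than a path, which, as you correctly note, still bounds $d(v_i,v_j)$ from above by its length.
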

If $v_i$ is an isolated vertex in $G$, then $v_i$ is isolated in $M(G)$, and $v'_i$ is adjacent to the root $u$.
\begin{lemma} \label{lem2.5}
	If $G$ is a graph without isolated vertices. For $t\geq 1$,	$diam(M^t(G))=min(max(2, diam(G)), 4)$.
\end{lemma}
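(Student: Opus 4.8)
The plan is to prove the statement by induction on $t$, using the single-step result of Fisher et al.\ quoted just above the lemma --- namely that $diam(M(H))=min(max(2,diam(H)),4)$ for every graph $H$ without isolated vertices --- as both the base case and the engine of the inductive step. Throughout, write $d=diam(G)$ and set $c=min(max(2,d),4)$, the claimed value; the key elementary fact is that $2\leq c\leq 4$ for every choice of $d\geq 1$.

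The base case $t=1$ is immediate: the formula $diam(M(G))=min(max(2,d),4)=c$ is precisely the Fisher et al.\ result, applicable because $G$ has no isolated vertices by hypothesis.

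For the inductive step I would first record the auxiliary fact that $M^{t-1}(G)$ has no isolated vertices whenever $t\geq 2$. Since $G$ has no isolated vertices, the remark preceding Lemma~\ref{lem2.4} --- that $M(\cdot)$ is connected exactly when its argument has no isolated vertices --- shows $M(G)$ is connected; iterating, $M^{k}(G)$ is connected, hence free of isolated vertices, for every $k\geq 1$, and in particular for $k=t-1$. This legitimises applying the single-step formula to $H=M^{t-1}(G)$, giving $diam(M^{t}(G))=min(max(2,diam(M^{t-1}(G))),4)$.

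It then remains to substitute the inductive hypothesis $diam(M^{t-1}(G))=c$ and observe that $c$ is a fixed point of the map $x\mapsto min(max(2,x),4)$ on $[2,4]$: because $c\geq 2$ we have $max(2,c)=c$, and because $c\leq 4$ we have $min(c,4)=c$, whence $diam(M^{t}(G))=c$, closing the induction. None of these steps is a genuine obstacle; the only point requiring care is the bookkeeping that keeps every intermediate iterated Mycielskian isolated-vertex-free, so that Fisher's formula is validly invoked at each stage.
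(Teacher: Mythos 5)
Your proof is correct and follows essentially the same route as the paper's: induction on $t$ using Fisher et al.'s single-step formula applied to $M^{t-1}(G)$. Your fixed-point observation replaces the paper's explicit case analysis on $diam(G)$, and you are slightly more careful in justifying that each iterate is free of isolated vertices, but the argument is the same.
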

\begin{proof}
	Based on \cite{fisher}, we have $diam(M(G))=min(max(2, diam(G)),4)$. Suppose that for $k\geq 1$, we have $diam(M^k(G))=min(max(2, diam(G)), 4)$.  We have $M^{k+1}(G)=M(M^k(G))$, so $diam(M^{k+1}(G))= min(max(2, diam(M^k(G)), 4)$. If $diam(G)=1 \text{ or } 2$, then by the inductive hypothesis $diam(M^k(G))=2$, it follows that $diam(M^{k+1}(G))=2$. If $diam(G)=3$, by the inductive hypothesis $diam(M^k(G))=3$ and so $diam(M^{k+1}(G))=3$. By using the same argument if $diam(G)\geq 4$, we get that $diam(M^{k+1}(G))=4$.
\end{proof}
By Lemma~\ref{lem2.5},  if the diameter of a graph $G$ is $1$ or $2$, then the diameter of the $t$-th iterated Mycielski $M^t(G)$ is $2$, for any $t\geq 1$. It is clear from the definition of the $L(2,1)$-Labeling, that any vertices at distance less or equal to $2$ must be assigned  distinct labels. So for any diameter two graph $G$, all the vertices must be assigned  different labels $\lambda(G)\geq |G|-1$. These arguments will also be used throughout the paper.	
\par We recall some previous results on the $L(2,1)$-labeling of graphs.
\begin{lemma} \label{lem2.6}
	\cite{griggs}
	If $G$ is a graph of maximum degree $\bigtriangleup \geq 1$, then $\lambda(G) \geq \bigtriangleup+1$. If  $\lambda(G)=\bigtriangleup +1$, then for every  vertex $v$ of degree $\bigtriangleup $, $f(v)=0$ or $\bigtriangleup +1$ for any $\lambda$-labeling $f$.
\end{lemma}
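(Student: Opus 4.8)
The plan is to argue directly from the local structure around a vertex of maximum degree. Fix any $L(2,1)$-labeling $f$ with span $k=\lambda(G)$, let $v$ be a vertex with $\deg(v)=\bigtriangleup$, and let $w_1,\ldots,w_\bigtriangleup$ be its neighbors. The two defining constraints of the labeling supply everything I need: each $w_i$ is at distance $1$ from $v$, so $|f(w_i)-f(v)|\geq 2$; and any two neighbors $w_i,w_j$ are at distance at most $2$, since they share the common neighbor $v$, so $|f(w_i)-f(w_j)|\geq 1$, i.e. the neighbors receive pairwise distinct labels. Hence $f(w_1),\ldots,f(w_\bigtriangleup)$ are $\bigtriangleup$ distinct labels, all lying in $\{0,1,\ldots,k\}\setminus\{f(v)-1,f(v),f(v)+1\}$.

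First I would bound the number of labels available to the neighbors by a pigeonhole count. Writing $a=f(v)$, the forbidden set $\{a-1,a,a+1\}$ meets $\{0,\ldots,k\}$ in exactly $3$ elements when $1\leq a\leq k-1$, but in only $2$ elements when $a=0$ or $a=k$, since then one of $a\pm 1$ lies outside the range. The $\bigtriangleup$ distinct neighbor labels must fit in what remains, which forces $k+1-3\geq\bigtriangleup$ in the interior case and $k+1-2\geq\bigtriangleup$ in the boundary case. The weaker conclusion $k\geq\bigtriangleup+1$ holds in every case, establishing the first assertion.

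For the second assertion I would read off the same inequality under the extra hypothesis $k=\bigtriangleup+1$. If $f(v)=a$ were an interior label with $1\leq a\leq k-1$, the interior count would demand $k\geq\bigtriangleup+2$, contradicting $k=\bigtriangleup+1$; therefore $a\in\{0,k\}=\{0,\bigtriangleup+1\}$, which is exactly the claim.

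There is no real obstacle here beyond keeping the boundary cases honest: the entire point is that placing $f(v)$ at $0$ or at $k$ blocks only two labels rather than three and so frees one extra slot for the neighbors, and this single saved slot is precisely what separates the tight value $\bigtriangleup+1$ from the looser $\bigtriangleup+2$. The only subtlety worth checking carefully is that distinct neighbors are genuinely forced onto distinct labels, which holds because any two of them lie at distance at most $2$ through their common neighbor $v$.
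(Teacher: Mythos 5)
Your proof is correct. The paper does not reprove this lemma---it cites it directly from Griggs and Yeh---and your argument is precisely the standard one from that source: the $\bigtriangleup$ neighbors of a maximum-degree vertex $v$ must receive pairwise distinct labels (being at distance at most $2$ through $v$), all avoiding $\{f(v)-1,f(v),f(v)+1\}$, and the pigeonhole count together with the boundary-versus-interior distinction for $f(v)$ yields both the bound $\lambda(G)\geq\bigtriangleup+1$ and the forced values $f(v)\in\{0,\bigtriangleup+1\}$ in the tight case.
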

For $t\geq 1$, from Lemma ~\ref{lem2.6} and Lemma ~\ref{lem2.3}, an obvious lower bound for $\lambda(M^t(G))$ would be $max(2^{t-1}(n+1), 2^t\bigtriangleup+1)$. 
\begin{lemma} \label{lem2.7} \cite{changkuo}
	If $H$ is a subgraph of a graph $G$, then $\lambda(H) \leq \lambda(G)$.
\end{lemma}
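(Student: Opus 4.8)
The plan is to take an optimal labeling on the larger graph and simply restrict it to the subgraph, so that no new labels are introduced and the span can only decrease. Concretely, I would let $f$ be a $\lambda$-labeling of $G$, that is, an $L(2,1)$-labeling whose span equals $\lambda(G)$, and consider its restriction $g = f|_{V(H)}$ to the vertex set of $H$. Since $g$ uses only labels already used by $f$, the span of $g$ is at most $\lambda(G)$; the entire task is therefore to check that $g$ is a genuine $L(2,1)$-labeling of $H$.

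The single fact that drives the verification is the monotonicity of distance under passing to a subgraph: for any two vertices $x,y \in V(H)$ we have $d_G(x,y) \leq d_H(x,y)$, because every walk joining $x$ and $y$ inside $H$ is also a walk inside $G$, so $G$ cannot need more edges to connect them than $H$ does. I would then split according to the two constraints of the definition. If $d_H(x,y)=1$, then $xy$ is an edge of $H$, hence an edge of $G$, so $d_G(x,y)=1$ and the fact that $f$ is valid on $G$ already guarantees $|g(x)-g(y)|=|f(x)-f(y)|\geq 2$. If $d_H(x,y)=2$, then by monotonicity $d_G(x,y)\in\{1,2\}$; in either subcase $f$ forces $|f(x)-f(y)|\geq 1$, and thus $|g(x)-g(y)|\geq 1$. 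Hence $g$ satisfies both conditions and is an $L(2,1)$-labeling of $H$.

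Having produced a valid labeling of $H$ of span at most $\lambda(G)$, I would conclude by the minimality in the definition of $\lambda(H)$ that $\lambda(H)\leq \lambda(G)$. I do not anticipate any serious obstacle here; the only point requiring a little care is the distance-two case, where a pair at distance $2$ in $H$ may collapse to distance $1$ in $G$. This is harmless precisely because the distance-one requirement ($\geq 2$) dominates the distance-two requirement ($\geq 1$), so the inherited labels can never violate the weaker condition. The argument applies uniformly whether $H$ is obtained from $G$ by deleting vertices, deleting edges, or both.
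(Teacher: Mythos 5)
Your proof is correct: restricting a $\lambda$-labeling of $G$ to $V(H)$ and checking the two distance conditions, with the key observation that a pair at distance $2$ in $H$ may be at distance $1$ in $G$ but the stronger separation $\geq 2$ then subsumes the required $\geq 1$, is exactly the standard argument. The paper itself gives no proof of this lemma (it is quoted from Chang and Kuo), so there is nothing to compare against; your write-up would serve as a complete proof.
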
 
\begin{theorem}\label{th2.1} \cite{griggs}
	If $G$ is a diameter $2$ graph with maximum degree $\bigtriangleup$, then $\lambda(G) \leq \bigtriangleup^2$.
\end{theorem}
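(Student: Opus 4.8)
The plan is to reduce the statement to a purely combinatorial question about orderings of the vertices. First I would record the Moore-type bound on the order: fixing any vertex $v$, since $\mathrm{diam}(G)=2$ every other vertex lies within distance $2$ of $v$, and the number of such vertices is at most $1+\bigtriangleup+\bigtriangleup(\bigtriangleup-1)=\bigtriangleup^2+1$; hence $n\le \bigtriangleup^2+1$. Next I would exploit the defining feature of diameter-$2$ graphs already noted in the excerpt: any two vertices are at distance $1$ or $2$, so in every $L(2,1)$-labeling all labels are pairwise distinct. Consequently every distance-$2$ constraint is met automatically by an injective labeling, and the only genuine requirement is that adjacent vertices receive labels differing by at least $2$.

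The key step is the following equivalence. Given an $L(2,1)$-labeling, list the vertices $w_1,\dots,w_n$ in increasing order of label; then $f(w_{i+1})-f(w_i)\ge 1$ always, and $\ge 2$ precisely when $w_iw_{i+1}\in E$, so the span is at least $(n-1)+\big|\{i:w_iw_{i+1}\in E\}\big|$. Conversely, from any ordering $w_1,\dots,w_n$ one builds a valid labeling by setting $f(w_1)=0$ and incrementing by $1$ or by $2$ according to whether consecutive vertices are non-adjacent or adjacent: distinctness handles all distance-$2$ pairs, the step of $2$ handles each adjacent consecutive pair, and non-consecutive adjacent pairs differ by at least $2$ automatically. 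Hence $\lambda(G)=(n-1)+g(G)$, where $g(G)$ is the minimum, over all vertex orderings, of the number of consecutive pairs that are adjacent in $G$. Since $n-1\le \bigtriangleup^2$, it suffices to produce an ordering with at most $\bigtriangleup^2-n+1$ consecutive adjacent pairs.

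The cleanest way to control $g(G)$ is through the complement: consecutive non-adjacent pairs in $G$ are edges of $\overline{G}$, so an ordering realizing few adjacent consecutive pairs is essentially a path, or a union of few paths, in $\overline{G}$. If $\overline{G}$ has a Hamiltonian path then $g(G)=0$ and $\lambda(G)\le n-1\le \bigtriangleup^2$; more generally, concatenating a minimum path cover of $\overline{G}$ gives $g(G)\le \pi(\overline{G})-1$, so I would aim to bound the path-cover number $\pi(\overline{G})$. I expect the main obstacle to lie exactly here, in the dense regime where $n$ is close to the Moore bound $\bigtriangleup^2+1$: then there is essentially no slack, and one must actually guarantee an (almost) Hamiltonian path in $\overline{G}$. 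Here I would invoke a degree condition --- $\overline{G}$ has minimum degree at least $n-1-\bigtriangleup$, which is large relative to $n$ when $n$ is near $\bigtriangleup^2$ --- and apply a Dirac/Ore-type or Chv\'atal--Erd\H{o}s-type sufficient condition for Hamiltonicity, while the complementary sparse regime ($n$ small) is handled directly by letting the slack $\bigtriangleup^2-n+1$ absorb the few forced gaps. Balancing these two regimes so that the forced-gap count never exceeds $\bigtriangleup^2-n+1$ is the delicate part of the argument.
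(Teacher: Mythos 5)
The paper itself gives no proof of this statement---it is quoted verbatim from Griggs and Yeh \cite{griggs}---so there is no in-paper argument to compare against; your plan is, in outline, a reconstruction of the argument in that source. All of your reductions are sound: the Moore bound $n\le\bigtriangleup^2+1$; the fact that diameter two forces injective labelings; the exact formula $\lambda(G)=(n-1)+g(G)$, where $g(G)$ is the minimum over vertex orderings of the number of consecutive adjacent pairs (your two inequalities establishing this are both correct); and the bound $g(G)\le p_v(\overline{G})-1$ obtained by concatenating a minimum path cover of $\overline{G}$.

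The one real issue is the final ``balancing,'' which you flag as delicate but do not carry out, and your claim that the sparse regime is ``handled directly'' by the slack is not quite true as stated. Concretely: the minimum degree of $\overline{G}$ is at least $n-1-\bigtriangleup$, so the Ore-type condition for a Hamiltonian path (degree sum at least $n-1$ over non-adjacent pairs) holds, and hence $\lambda(G)\le n-1\le\bigtriangleup^2$, precisely when $n\ge 2\bigtriangleup+1$. In the complementary range $n\le 2\bigtriangleup$, the trivial estimate $g(G)\le n-1$ gives $\lambda(G)\le 2n-2\le 4\bigtriangleup-2$, which is at most $\bigtriangleup^2$ only for $\bigtriangleup\ge 4$. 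This leaves genuine leftover cases: $\bigtriangleup=3$, $n=6$ (where $2n-2=10>9$) and $\bigtriangleup=2$, $n=4$ (where $2n-2=6>4$). These are easily dispatched---for instance, when $n\ge\bigtriangleup+2$ the complement $\overline{G}$ has no isolated vertices and therefore admits a path cover by at most $\lceil n/2\rceil$ paths, which suffices for the first case, and the only diameter-two graph with $\bigtriangleup=2$ and $n=4$ is $C_4$, with $\lambda(C_4)=4=\bigtriangleup^2$---but without some such patch your sketch is not yet a proof. With those two small cases filled in, your argument is complete and essentially identical to the one cited.
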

In the proof of Theorem ~\ref{th2.1}, Griggs and Yeh proved that for a graph $G$ of order $n$ and maximum degree $\bigtriangleup \geq (n-1)/2\geq 3$, we have $\lambda(G)<\bigtriangleup^2$. Since $\bigtriangleup_{M}=max(n,2\bigtriangleup)$ and $|M(G)|=2n+1$, it means  the $\bigtriangleup^2$-conjecture is true for the Mycielski of any graph $G$ of order $n\geq 3$.
\par The \textit{path covering number} of a graph $p_v(G)$, is the smallest number of vertex-disjoint paths needed to cover all the vertices of a graph $G$. The complement graph $\overline{G}$ of a graph $G$ is the graph whose vertex set is $V$ and where $xy\in E(\overline{G})$ if only if $xy \notin E(G)$. In \cite{george}, Georges et al. related the path covering number of the complement graph $\overline{G} $ to the $L(2,1)$-labeling number of $G$, in the following.
\begin{theorem} \label{th2.2} \cite{george}
	For any graph $G$ of order $n$, we have\\
	$\bullet$ $\lambda(G)\leq n-1$ if and only if $p_v(\overline{G})=1$.\\
	$\bullet$ $\lambda(G)= n+r-2 \text{ if and only if } p_v(\overline{G})=r\geq 2$.	
\end{theorem}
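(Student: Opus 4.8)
The plan is to prove both statements together by establishing the two matching inequalities that pin down $\lambda(G)$ in terms of $r:=p_v(\overline{G})$: a construction giving $\lambda(G)\le n+r-2$, and an extraction giving $p_v(\overline{G})\le\max(1,\lambda(G)-n+2)$. The first bullet is then the case $r=1$ (which yields $\lambda(G)\le n-1$, and conversely), while the second is the exact value $\lambda(G)=n+r-2$ for $r\ge 2$. The crucial elementary translation I would record first is this: in any $L(2,1)$-labeling $f$, two vertices with $|f(x)-f(y)|=1$ must be non-adjacent in $G$ (otherwise the distance-one condition forces difference $\ge 2$), and two vertices may share a label only if $d_G(x,y)\ge 3$. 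Hence both ``consecutive labels'' and ``equal labels'' encode non-adjacency in $G$, i.e. adjacency in $\overline{G}$; this is the bridge between the two invariants.

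\emph{Upper bound (cover $\Rightarrow$ labeling).} Take a minimum path cover $P_1,\dots,P_r$ of $\overline{G}$ and concatenate the paths into one ordering of $V$. Give the vertices of $P_1$ the labels $0,1,\dots,|P_1|-1$, then leave one integer unused, then give $P_2$ the next consecutive block, leave a gap, and continue. Successive vertices inside a path are adjacent in $\overline{G}$, hence non-adjacent in $G$, so their labels (differing by $1$) meet the distance-two condition; every other pair of vertices receives labels differing by at least $2$, which satisfies both conditions irrespective of their distance in $G$; and all labels are distinct. The largest label is $(n-1)+(r-1)=n+r-2$, so $\lambda(G)\le n+r-2$, which for $r=1$ is exactly $\lambda(G)\le n-1$.

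\emph{Lower bound (labeling $\Rightarrow$ cover).} Given a labeling $f$ of span $k$, sort the vertices by label (breaking ties arbitrarily) and cut the sorted list wherever two successive labels differ by at least $2$. Each resulting block is a path in $\overline{G}$, because successive vertices in a block have equal or consecutive labels and are therefore $\overline{G}$-edges; this already produces a path cover. Counting label-jumps against the span gives the bookkeeping identity $k=(n-z)+h-1$, where $z$ is the number of repeated labels and $h$ the number of unused integers in $\{0,\dots,k\}$, so that $k-n+2=h-z+1$. It then remains to exhibit a path cover of $\overline{G}$ of size at most $\max(1,h-z+1)$.

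\emph{Main obstacle.} The block cover just described has size at most $h+1$, which is too large exactly when labels repeat ($z>0$); a repeat is permitted only between vertices at distance $\ge 3$ in $G$, and whole equal-label classes form cliques of $\overline{G}$ that are completely joined to the neighbouring classes, creating extra $\overline{G}$-edges across the gaps that the block cover ignores. The delicate step is to use these edges to merge blocks, absorbing the $z$ repeats and bringing the cover down to $\max(1,h-z+1)=\max(1,k-n+2)$ paths; this is where the real work lies, and one must also handle the degenerate cases (isolated vertices of $G$, and graphs where $\overline{G}$ is disconnected, which forces the diameter of $G$ to be at most $2$ and hence an injective labeling). Once $p_v(\overline{G})\le\max(1,k-n+2)$ is proved for every labeling, setting $k=\lambda(G)$ and comparing with the construction yields $\lambda(G)\le n+p_v(\overline{G})-2$ and $p_v(\overline{G})\le\max(1,\lambda(G)-n+2)$ simultaneously; these force equality $\lambda(G)=n+r-2$ when $r\ge 2$ and the equivalence $\lambda(G)\le n-1\iff r=1$, completing the theorem.
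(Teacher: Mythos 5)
This statement is quoted from \cite{george} and the paper gives no proof of it, so your attempt can only be measured against correctness. Your construction direction (a path cover of $\overline{G}$ with $r$ paths yields a labeling of span $n+r-2$) is correct, and so is the bookkeeping identity $k-n+2=h-z+1$. The genuine gap is in the converse, and you have both deferred it and misdiagnosed it. You write that equal-label classes are ``completely joined to the neighbouring classes, creating extra $\overline{G}$-edges across the gaps''; but the labeling conditions certify non-adjacency in $G$ only for pairs whose labels are equal or differ by exactly $1$, i.e.\ for pairs \emph{inside} a block. Two classes separated by a gap have labels differing by at least $2$, and the labeling says nothing about adjacency between them, so no merging edges are guaranteed where you claim them. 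A concrete instance: $G=K_2\cup K_1$ with the optimal labeling $0,2,0$ has one repeat, one gap and two blocks, yet $p_v(\overline{G})=1$; the $\overline{G}$-edge that merges the two blocks joins labels $0$ and $2$ and is certified by nothing in your scheme. Since the whole converse rests on reducing $h+1$ blocks to $h-z+1$ paths, and you explicitly leave that step as ``where the real work lies'' with an incorrect mechanism, the proof does not close.

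The missing idea is metric rather than label-arithmetic: two vertices with the same label are at distance at least $3$ in $G$, so any third vertex $w$ can be $G$-adjacent to at most one member of a label class (two such members would be at distance at most $2$ through $w$). Hence a class of size $s$ supplies $s-1$ vertices adjacent in $\overline{G}$ to all but at most one vertex of any other set, giving $z$ ``near-dominating'' vertices in total, and it is these --- not edges across gaps --- that absorb the $z$ surplus blocks. Alternatively, one can prove only the case $r=1$ (that $\lambda(G)\le n-1$ if and only if $\overline{G}$ has a Hamiltonian path) and deduce the case $r\ge 2$ by adding $r-2$ isolated vertices to $G$: this leaves $\lambda$ unchanged, turns $\overline{G}$ into $\overline{G}\vee K_{r-2}$, and the elementary inequality $p_v(H\vee K_s)\ge p_v(H)-s$ converts non-traceability of that join into $\lambda(G)\ge n+r-2$. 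Without one of these ingredients the lower bound, and with it both equivalences, remains unproved.
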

\section{The Mycielski of a graph $M(G)$}\label{sec3}
\subsection{Bounds for the $L(2,1)$-labeling number of $M(G)$}
\begin{theorem}\label{th3.1}
	Let $G$ be a graph of order $n\geq 1$,  and maximum degree $\bigtriangleup\geq 0$, we have 
	$$max(n+1, 2(\bigtriangleup+1)) \leq \lambda(M (G))\leq (n+1)+\lambda(G).$$
\end{theorem}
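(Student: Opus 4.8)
For the lower bound the plan is to treat the two quantities in the maximum separately. The bound $\lambda(M(G))\ge n+1$ is immediate: the root $u$ is adjacent to all $n$ copies $v'_1,\dots,v'_n$, so $deg_M(u)=n$ and hence $\bigtriangleup_M\ge n$; Lemma~\ref{lem2.6} then gives $\lambda(M(G))\ge \bigtriangleup_M+1\ge n+1$. For the bound $\lambda(M(G))\ge 2(\bigtriangleup+1)$, which I only need when $\bigtriangleup\ge 1$ (for $\bigtriangleup=0$ it is dominated by $n+1$ since $n\ge 1$), I would exhibit a set of $2\bigtriangleup+3$ vertices that are pairwise at distance at most $2$ in $M(G)$; such a set must receive $2\bigtriangleup+3$ pairwise distinct labels, forcing span at least $2\bigtriangleup+2$. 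Concretely, fix a vertex $v$ of degree $\bigtriangleup$ with neighbours $w_1,\dots,w_\bigtriangleup$ in $G$ and take $S=\{v,\,v',\,u\}\cup\{w_1,\dots,w_\bigtriangleup\}\cup\{w'_1,\dots,w'_\bigtriangleup\}$. Using Lemma~\ref{lem2.4} one verifies all pairwise distances: $v$ is adjacent to every $w_l$ and every $w'_l$; $v'$ is adjacent to every $w_l$ and to $u$; $u$ is adjacent to every $w'_l$; the copies are pairwise at distance $2$ and each is at distance $2$ from its original; and $d_{M}(v,v')=d_{M}(v,u)=d_{M}(u,w_l)=2$, while the $w_l$ are pairwise within distance $2$ via $v$. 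Every pair in $S$ is therefore within distance $2$, which yields the claim.

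For the upper bound the plan is constructive. Start from a $\lambda$-labeling $f$ of $G$ with labels in $\{0,1,\dots,\lambda(G)\}$ and keep it on the original vertices, $g(v_i)=f(v_i)$. By Lemma~\ref{lem2.4} the constraints among originals in $M(G)$ are exactly those arising from pairs at $G$-distance at most $2$, so $g$ restricted to $V$ is already feasible. Next assign the $n$ copies the distinct labels $\lambda(G)+2,\lambda(G)+3,\dots,\lambda(G)+n+1$ in any order: since every copy label is at least $\lambda(G)+2$ and every original label is at most $\lambda(G)$, each copy differs from every original by at least $2$, covering all copy–original constraints, and the copies are pairwise at distance $2$ and receive distinct labels. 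The only remaining vertex is the root $u$, which must differ by at least $2$ from all copies (it is adjacent to them) and must avoid the labels of all originals (it is at distance $2$ from each). Placing $u$ on any label $\ell\le\lambda(G)$ not used by $f$ satisfies both requirements simultaneously, because $\ell\le\lambda(G)$ is at least $2$ below every copy label and $\ell$ avoids the originals, and this produces the required span $\lambda(G)+n+1$.

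The hard part is exactly this placement of the root, and it works verbatim only when the chosen $\lambda$-labeling $f$ leaves some label in $\{0,\dots,\lambda(G)\}$ unused. When every $\lambda$-labeling of $G$ is onto $\{0,\dots,\lambda(G)\}$ (as happens, for instance, for $P_4$), there is no free low label; one sees that $u$ together with the copies induces a star $K_{1,n}$, whose optimal labeling needs a window of $n+2$ consecutive integers, so the naive two-block layout only delivers $\lambda(G)+n+2$. To recover the sharp bound I would break the strict separation of the two blocks: set $g(u)=\lambda(G)+1$, keep $n-1$ copies on $\lambda(G)+3,\dots,\lambda(G)+n+1$, and relocate one copy $v'_j$ to a label $\ell\le\lambda(G)-1$ chosen so that $\ell$ differs by at least $2$ from the labels of the neighbours of $v_j$ and avoids the labels at $G$-distance $2$ from $v_j$ (a natural candidate is the copy of the vertex carrying the top label $\lambda(G)$, whose neighbours all lie at most $\lambda(G)-2$). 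Proving that such a free interior slot always exists is the crux, and this is precisely where the distance structure of Lemma~\ref{lem2.4} and, in the tight regime, a matching/Hall-type argument of the kind developed later in the paper come into play; I expect this surjective case to be the main obstacle, the generic (non-surjective) case being routine.
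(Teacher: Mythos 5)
Your lower bound is correct and is essentially the paper's own argument in disguise: your set $S$ is precisely the vertex set of the embedded copy of $M(K_{1,\bigtriangleup})$ inside $M(G)$, which the paper handles via Observation~\ref{lem2.2}, Lemma~\ref{lem2.7} and the diameter-two property of $M(K_{1,\bigtriangleup})$ rather than by checking pairwise distances by hand. The upper bound, however, has a genuine gap, and it is exactly the one you flag yourself: your construction (originals kept on their $h$-labels, copies stacked on $\lambda(G)+2,\ldots,\lambda(G)+n+1$, root inserted into an unused label $\ell\le\lambda(G)$) needs a $\lambda$-labeling of $G$ that is not onto $\{0,\ldots,\lambda(G)\}$, and such a labeling need not exist.

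Worse, your fallback for the surjective case does not close the gap; it provably fails on the very example you cite. For $G=P_4$ the only $3$-labelings are $(1,3,0,2)$ and $(2,0,3,1)$, both surjective. Take $h=(1,3,0,2)$, root at $\lambda(G)+1=4$, copies on $6,7,8$, and try to relocate one copy $v'_j$ to a label $\ell$: adjacency to the root forces $\ell\le 2$, adjacency to the $G$-neighbours of $v_j$ forces $|\ell-h(v_i)|\ge 2$ for each neighbour $v_i$, and distance two forces $\ell\ne h(v_j)$ and $\ell\ne h(v_i)$ for $d_G(v_i,v_j)=2$. Checking $j=1,2,3,4$ in turn, the admissible sets are respectively $\{\ell\le 1\}\setminus\{0,1\}$, $\{\ell\le 2\}\cap\{\ell\ge 3\}$, $\{0\}\setminus\{0,1\}$, and $\{2\}\setminus\{2\}$ --- all empty. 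So no relocation exists, and no Hall-type argument can rescue this particular layout. The paper's proof avoids the issue with a different block ordering, which is the missing idea: copies get the \emph{low} labels $0,\ldots,n-1$, originals get $n+h(v_i)$, and the root goes on top at $(n+1)+\lambda(G)$. The root is then adjacent only to copies lying far below it, and the one delicate interface (copy label $n-1$ against original label $n$) is resolved by indexing so that the copy receiving $n-1$ is the copy of the vertex with $h$-label $0$: a vertex is never adjacent to its own copy, and its $G$-neighbours have $h$-labels at least $2$, hence $f$-labels at least $n+2$. With that reordering no free slot is ever needed, which is why the paper's proof is uniform while yours splits into a routine case and an unresolved one.
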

\begin{proof}
	According to the definition of the Mycielski of a graph, the degree of the root $deg_M(u)=n$, then  $\lambda(M(G))\geq n+1$. Otherwise, for $\bigtriangleup\geq 1$, we have the star graph $K_{1,\bigtriangleup}$ is a subgraph of $G$. Then by Observation ~\ref{lem2.2} and Lemma ~\ref{lem2.7}, we have  $ \lambda(M (G))\geq \lambda(M (K_{1,\bigtriangleup}))$. Since $diam(K_{1,\bigtriangleup})=2$ and $|K_{1,\bigtriangleup}|= \bigtriangleup+1$, it follows that $diam(M(K_{1,\bigtriangleup}))=2$, and $ \lambda(M (K_{1,\bigtriangleup}))\geq |M(K_{1,\bigtriangleup})|-1 =2(\bigtriangleup+1)$. Thus, $ \lambda(M (G))\geq 2(\bigtriangleup+1)$. 
	\par For the upper bound, let $h$ be a  $\lambda$-labeling of $G$. We denote $M (G)$ the Mycielski graph of $G$, with vertex set $V(M(G))=\{v_i,v'_i,u : 1\leq i \leq n\}$, where  $v'_i$ is the copy of $v_i$ in $M (G)$  and $u$ is the root. Since every $\lambda$-labeling must assign the label $0$ to a vertex of $G$, we consider without loss of generality that
	$h(v_n)=0$. We define the following labeling $f$ on $V(M (G))$.	
	$$ f(x)=\begin{cases}
	i-1 \hspace{65pt}if\,\,  x=v'_i,\,\, 1\leq i \leq n,\\
	n+h(v_i) \hspace{45pt}if\,\,  x=v_i,\,\, 1\leq i \leq n, \\
	(n+1)+\lambda(G) \hspace{25pt}if\,\,  x=u.
	\end{cases}  $$	
	Now we will check that $f$ is an $L(2,1)$-labeling of $M (G)$, we get five cases.
	\begin{itemize}
		\item We have $ |f(v'_i)-f(v'_j)|=|i-j|\geq 1$ and $d_{M }(v'_i,v'_j)=2$, for all  $1\leq i,j\leq n$  $ i \neq j$.
		\item  By Lemma ~\ref{lem2.4}, if $d_{M}(v_i,v_j)=1$  (respectively $=2$), then $ d_G(v_i,v_j)=1$ (respectively $=2$). We have $ |f(v_i)-f(v_j)|=|h(v_i)-h(v_j)|$. This means  $|f(v_i)-f(v_j)|\geq 2$,  if  $ d_{M }(v_i,v_j)=1 $ and  $|f(v_i)-f(v_j)|\geq 1$,  if  $d_{M }(v_i,v_j)=2$.
		\item For all $1\leq i,j\leq n$, we have  $|f(v_i)-f(v'_j)|=|n+h(v_i)-j+1|$. The distance two conditions are respected for all the following cases,\\
		$i)$ If $1\leq j\leq n-1$,  then $ |f(v_i)-f(v'_j)|\geq 2$.\\
		$ii)$ If $j=n$ and $i=n$, we have  $ |f(v_n)-f(v'_n)|=1$, and $d_M(v_n,v'_n)\geq 2$.\\
		$iii)$ If $j=n$ and $ d_G(v_i,v_n)=1$, we have $ |h(v_i)-h(v_n)|\geq 2$ , so $h(v_i)\geq 2$. It follows that $|f(v_i)-f(v'_n)|\geq 2$.\\
		$iv)$ If $j=n$ and $ d_G(v_i,v_n)\geq 2$, by Lemma ~\ref{lem2.4} we have  $d_{M }(v_i,v'_n)\geq 2$, and $|f(v_i)-f(v'_n)|\geq 1$.
		\item  For all $1\leq i\leq n $,  $ |f(u)-f(v'_i)|=|(n+1)+\lambda (G)-i+1| \geq 2.$
		\item For all $1\leq i\leq n$,   $ |f(u)-f(v_i)|=|(n+1)+\lambda (G)-(n+h(v_i))|\geq 1$, and $d_{M }(u,v_i)\geq 2$.
	\end{itemize}
	\par 
	So $f$ is an $L(2,1)$-labeling of $M (G)$ with span $(n+1)+\lambda(G)$. Hence $\lambda(M(G) )\leq (n+1)+\lambda(G).$
\end{proof}
\begin{cor} \label{cor3.1}
	If $G$ is a diameter $2$ graph of maximum degree $\bigtriangleup$, then $ \lambda(M(G)) \leq 2(\bigtriangleup^2+1)$.	
\end{cor}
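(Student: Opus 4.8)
The plan is to chain the general upper bound from Theorem~\ref{th3.1} with two standard facts about diameter~$2$ graphs, so that everything is expressed in terms of $\bigtriangleup$ alone. Theorem~\ref{th3.1} already gives $\lambda(M(G)) \leq (n+1) + \lambda(G)$ for every graph $G$ of order $n$. Hence it suffices to bound the two summands $\lambda(G)$ and $n$ by functions of $\bigtriangleup$ and then add.

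For the first summand, I would invoke the Griggs--Yeh estimate of Theorem~\ref{th2.1} directly: since $G$ has diameter $2$, we have $\lambda(G) \leq \bigtriangleup^2$. For the second summand I would use a Moore-type counting argument to bound the order. Fix a vertex $v$ of maximum degree $\bigtriangleup$ (we may assume $\bigtriangleup \geq 1$, the case $\bigtriangleup=0$ being degenerate). Because $diam(G)=2$, every vertex other than $v$ is at distance $1$ or $2$ from $v$; there are at most $\bigtriangleup$ vertices at distance $1$, and each such neighbour, being already adjacent to $v$, introduces at most $\bigtriangleup-1$ further vertices at distance $2$. This gives $n \leq 1 + \bigtriangleup + \bigtriangleup(\bigtriangleup-1) = \bigtriangleup^2 + 1$, so $n+1 \leq \bigtriangleup^2 + 2$.

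Substituting both bounds into Theorem~\ref{th3.1} yields $\lambda(M(G)) \leq (\bigtriangleup^2+2) + \bigtriangleup^2 = 2(\bigtriangleup^2+1)$, which is exactly the claimed inequality. I do not expect any genuine obstacle here: the corollary is essentially an assembly of Theorem~\ref{th3.1}, Theorem~\ref{th2.1}, and the elementary vertex-count bound for diameter~$2$ graphs. The only points worth a moment of care are recognising that the order $n$ must itself be controlled (the raw bound $(n+1)+\lambda(G)$ is not yet a function of $\bigtriangleup$), and checking the small/degenerate cases so that the Moore bound and the diameter hypothesis are both legitimately in force.
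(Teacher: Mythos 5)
Your proposal is correct and follows exactly the paper's own route: combine the upper bound of Theorem~\ref{th3.1} with $\lambda(G)\leq \bigtriangleup^2$ from Theorem~\ref{th2.1} and the Moore bound $n\leq \bigtriangleup^2+1$ (which the paper simply cites to Hoffman--Singleton while you rederive it by the standard counting argument). There is nothing to add.
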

\begin{proof} 
	By Theorem ~\ref{th2.1} for a diameter $2$ graph,  we have  $\lambda(G) \leq \bigtriangleup^2$. Also, we have $|G|=n\leq \bigtriangleup^2+1$, known as the Moore bound due to Hoffman and Singleton \cite{hoffman}. By combining this with the upper bound of Theorem ~\ref{th3.1}, we get that  $ \lambda(M(G)) \leq 2(\bigtriangleup^2+1)$.	
\end{proof}
\par The bound $2(\bigtriangleup^2+1)$ in Corollary ~\ref{cor3.1}, can only be attained by the Mycielski of diameter two Moore graphs \cite{hoffman}, since the diameter of the Mycielski of these graphs is two, and these are the only diameter two graphs with order $\bigtriangleup^2+1$ and  $\lambda$-number equal to $\bigtriangleup^2$ \cite{griggs}. The only known graphs achieving this bound are $C_5$ the cycle of order $5$, the Petersen graph, and the Hoffman-Singleton graph. 
\subsection{$L(2,1)$-labeling number of the Mycielski and the star matching of the complement}
\par By using the upper bound of Theorem~\ref{th3.1} and Theorem~\ref{th2.2}, we can link the $\lambda$-number of $M(G)$ to the path covering of the complement graph $\overline{G}$. So if $p_v(\overline{G})=1$, i.e.  $\overline{G}$ has a Hamiltonian path, then $\lambda (M(G))\leq 2n$, the equality holds for diameter two graphs. Also if  $p_v(\overline{G})\geq 2$, then $\lambda(M(G))\leq 2n+p_v(\overline{G})-1$. But for more relevant conditions, the study of the path covering of the complement of $M(G)$ is required. \par
We can see that for any graph $G$,  $\overline{M}(G)$ the complement of the Mycielski graph of $G$ is a connected graph. The neighborhood of $u$ in $\overline{M}(G) $ is $V$. For all $1 \leq i \leq n$, $v_iv'_i\in E(\overline{M}(G))$. For $i\neq j$, $v'_iv'_j \in E(\overline{M}(G))$. Also  $v_iv'_j,v_iv_j \in E(\overline{M}(G)) $ if and only if $v_iv_j \notin E(G)$.  The subgraph induced by the set $V$ is $\overline{G}$. The subgraph induced by the set $V'$, is the complete graph on $n$ vertices.\par
Let $m$ be an integer greater or equal to $2$. An $m$-\textit{star matching} $H$ of $G$ is a subgraph of $G$, such that each component of $H$ is isomorphic to a star graph $K_{1,i}$, with $1 \leq i\leq m$. The $m$-\textit{star matching number}, denoted $s_m(G)$,  is the maximum order of an $m$-star matching of $G$, an $m$-star matching of order $s_m(G)$, is said to be maximum. If $s_m(G)=|G|$, we say that $G$ has a perfect $m$-star matching, a perfect $m$-star matching is known also as star-factor  or $\{K_{1,1},K_{1,2},\ldots,K_{1,m}\}$-factor \cite{kano,las}, the problem of finding whether or not a graph $G$ admits a perfect $m$-star matching can be solved in polynomial time \cite{kirk}. 
In \cite{lin}, Lin and Lam studied the $m$-star matching and the $m$-star matching number $s_m(G)$. They delivered an algorithm to compute $s_m(G)$ running in $O(|V||E|)$. Then they related the $4$-star matching number of $\overline{G}$ to the path covering number of $\overline{M}(G)$. In the following we denote by $i_4(G)$ the number of vertices unmatched in a maximum $4$-star matching of $G$, i.e. $i_4(G)=n-s_4(G)$. 
\begin{theorem}\label{th3.2} \cite{lin}
	For any graph $G$, we have\\
	$(i)$ if $i_4(\overline{G})\leq 4$, then $p_v(\overline{M}(G))=1$.\\
	$(ii)$ If $i_4(\overline{G})\geq 5$, then $p_v(\overline{M}(G))= \lceil \frac{i_4(\overline{G})}{2} \rceil -1$.
\end{theorem}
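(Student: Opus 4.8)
The plan is to prove both statements by relating a \emph{maximum} $4$-star matching of $\overline{G}$ to a \emph{minimum} path cover of $\overline{M}(G)$ through an explicit construction in each direction, exploiting the structure of $\overline{M}(G)$ recalled above: the copies $V'$ induce a clique $K_n$, the root $u$ is adjacent exactly to $V$, each $v_i$ is adjacent to its copy $v'_i$, and $v_iv_j,v_iv'_j\in E(\overline{M}(G))$ precisely when $v_iv_j\notin E(G)$. The first ingredient I would establish is a \emph{maximality lemma}: in any maximum $4$-star matching of $\overline{G}$, the set $W$ of unmatched vertices is independent (two adjacent unmatched vertices would form a $K_{1,1}$), and moreover every $w\in W$ is $\overline{G}$-adjacent only to centers of \emph{full} stars $K_{1,4}$ --- if $w$ were adjacent to a non-full center we could enlarge that star, and if $w$ were adjacent to a leaf $l$ we could detach $l$ (or re-center the star at $l$ with leaves including $w$) to gain one covered vertex. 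Hence the neighborhood of $v_w$ in $\overline{M}(G)$ reduces to $\{u,v'_w\}\cup\{v_c,v'_c : c\ \text{a full center with}\ wc\in E(\overline{G})\}$, and this restriction drives both bounds.

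For the upper bound the key building block is that each star $K_{1,i}$ of $\overline{G}$ with center $c$ and leaves $l_1,\dots,l_i$ spans a path in $\overline{M}(G)$ on all $2(i+1)$ associated vertices \emph{with both endpoints in $V'$}; one checks this directly (e.g.\ $v'_{l_1}-v_{l_1}-v_c-v_{l_2}-v'_{l_2}-v'_{l_3}-v_{l_3}-v'_c-\cdots$), using that $v_c$ is adjacent to every vertex of the star and that each $v_{l_j}$ reaches $v'_c$ and $v'_{l_j}$. Since $V'$ is a clique, these local paths concatenate through their $V'$-endpoints into one core path covering all matched vertices. It then remains to insert $u$ and the unmatched vertices: $u$ may be placed internally between two $V$-vertices (absorbing two troublesome vertices), and the two endpoints of a single path can themselves be unmatched vertices. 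I would show that up to four unmatched vertices can be absorbed into one Hamiltonian path, giving case $(i)$ ($p_v(\overline{M}(G))=1$ when $i_4(\overline{G})\le 4$, and note $\overline{M}(G)$ is connected so $p_v\ge 1$), while every further pair forces one additional path, yielding $p_v(\overline{M}(G))\le \lceil i_4(\overline{G})/2\rceil-1$ in case $(ii)$.

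For the matching lower bound I would force path endpoints. Each $v_w$ with $w\in W$ has only the single private neighbor $v'_w$ outside the shared hubs $u$ and the full centers' vertices $v_c,v'_c$; thus $v_w$ can be internal only by consuming a slot at one of these hubs, and the supply of such slots is bounded (each hub offers at most two, and they are simultaneously needed to carry the matched structure). Counting hub-incidences against the $2p$ endpoints available to a cover by $p$ paths yields $2p\ge i_4(\overline{G})-2$, hence $p\ge \lceil i_4(\overline{G})/2\rceil-1$; together with the construction this gives equality in $(ii)$, and since fewer than five unmatched vertices never force a second path it reconfirms $(i)$.

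The main obstacle is the tight bookkeeping in the lower bound, precisely because the full-star centers are \emph{not} inert: each $v'_c$ --- and the universal vertices that appear when $\overline{G}$ has isolated vertices --- can legitimately route an unmatched vertex internally, so one must show that rescuing an unmatched vertex through a full center's copy always costs an equal amount of connectivity elsewhere (a matched vertex then being pushed to an endpoint), so that no cover beats $\lceil i_4(\overline{G})/2\rceil-1$. Pinning down this exchange, together with the precise threshold at four unmatched vertices and the parity accounting that produces the ceiling, is where the real work lies; the construction and the maximality lemma are comparatively routine.
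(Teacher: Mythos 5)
First, note that the paper does not prove this statement at all: Theorem~\ref{th3.2} is imported verbatim from Lin and Lam \cite{lin}, so there is no internal proof to compare against; I can only assess your argument on its own terms. Your maximality lemma is correct, and your upper-bound construction is sound and complete in outline: each star $K_{1,i}$ of $\overline{G}$ does span a path on its $2(i+1)$ associated vertices with both ends in $V'$, these concatenate through the clique $V'$, the root $u$ can be spliced in between two $V$-vertices (or between two unmatched vertices via $v'_{w_1}-v_{w_1}-u-v_{w_2}-v'_{w_2}$), two further unmatched vertices can serve as the path's ends, and each additional pair costs one path. This settles $(i)$ entirely (since $p_v\geq 1$ trivially) and gives $p_v(\overline{M}(G))\leq \lceil i_4(\overline{G})/2\rceil-1$ in $(ii)$.

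The genuine gap is the lower bound in $(ii)$, and it is exactly where you say the ``real work lies'': the inequality $2p\geq i_4(\overline{G})-2$ does not follow from counting hub-incidences of the \emph{unmatched} vertices alone. Take $\overline{G}=K_{1,9}$: here $i_4(\overline{G})=5$, and the five unmatched leaves $v_w$ collectively see six internal slots at the hubs $u$, $v_c$, $v'_c$, so your count forces no endpoint at all and does not rule out $p=1$; the true obstruction is that all nine leaves compete for those six slots, i.e.\ the matched vertices must enter the count. The ``exchange'' you invoke (rescuing an unmatched vertex through a full center pushes a matched vertex to an endpoint) is precisely the content of the lower bound, and it is asserted, not proved; as stated the argument would fail. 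A clean way to close the gap is to reverse the roles of the two structures: given a cover by $p$ paths, project every path-edge of the form $v_iv_j$, $v_iv'_j$ or $v'_iv_j$ ($i\neq j$) to the edge $ij$ of $\overline{G}$. The resulting subgraph $F\subseteq\overline{G}$ has maximum degree at most $4$ (each of $v_i,v'_i$ carries at most two path-edges), so by Theorem~\ref{th3.5} the non-isolated part of $F$ has a perfect $4$-star matching, whence $i_4(\overline{G})\leq |Z|$ where $Z$ is the set of $F$-isolated vertices. For $i\in Z$ the path-neighbours of $v_i$ lie in $\{u,v'_i\}$, so every such $v_i$ except at most the two served by $u$ is a path endpoint, giving $|Z|\leq 2p+2$ and hence $p\geq\lceil i_4(\overline{G})/2\rceil-1$. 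This replaces the delicate exchange bookkeeping with a direct construction of a large $4$-star matching from the path cover, and avoids having to fix a maximum star matching in advance.
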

We show that the converse holds in both cases, similarly to Theorem ~\ref{th2.2} in~\cite{george}.
\begin{theorem}\label{th3.3}
	For any graph $G$, we have\\
	$(a)$  $i_4(\overline{G})\leq 4$ if and only if $p_v(\overline{M}(G))=1$.\\
	$(b)$  $ \lceil \frac{i_4(\overline{G})}{2} \rceil=r\geq  3$ if and only if $p_v(\overline{M}(G))=r-1$.
\end{theorem}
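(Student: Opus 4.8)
The plan is to derive both equivalences directly from Theorem~\ref{th3.2}, whose two cases already supply the forward implications once they are reorganized around the dividing value $i_4(\overline{G})=5$. The key underlying observation is that, for an integer $i_4(\overline{G})$, one has $\lceil i_4(\overline{G})/2\rceil\geq 3$ exactly when $i_4(\overline{G})\geq 5$; thus the two regimes of Theorem~\ref{th3.2} partition $p_v(\overline{M}(G))$ into the single value $1$ (when $i_4(\overline{G})\leq 4$) and values at least $2$ (when $i_4(\overline{G})\geq 5$). Establishing this clean dichotomy is essentially the whole argument, and each converse then drops out by contraposition, exactly as in the analogous equivalence of Theorem~\ref{th2.2}.

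For part $(a)$, the forward implication is precisely Theorem~\ref{th3.2}$(i)$. For the converse I would argue by contrapositive: assuming $i_4(\overline{G})\geq 5$, Theorem~\ref{th3.2}$(ii)$ gives $p_v(\overline{M}(G))=\lceil i_4(\overline{G})/2\rceil-1\geq \lceil 5/2\rceil-1=2$, so $p_v(\overline{M}(G))\neq 1$. Hence $p_v(\overline{M}(G))=1$ forces $i_4(\overline{G})\leq 4$, completing the equivalence.

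For part $(b)$, I would first record the translation that $\lceil i_4(\overline{G})/2\rceil=r\geq 3$ is equivalent to $i_4(\overline{G})\geq 5$ together with $\lceil i_4(\overline{G})/2\rceil=r$. The forward implication is then immediate from Theorem~\ref{th3.2}$(ii)$, which yields $p_v(\overline{M}(G))=r-1$. For the converse, suppose $p_v(\overline{M}(G))=r-1$ with $r\geq 3$, so that $p_v(\overline{M}(G))\geq 2$. Were $i_4(\overline{G})\leq 4$, Theorem~\ref{th3.2}$(i)$ would give $p_v(\overline{M}(G))=1$, a contradiction; hence $i_4(\overline{G})\geq 5$, and Theorem~\ref{th3.2}$(ii)$ applies to give $\lceil i_4(\overline{G})/2\rceil=p_v(\overline{M}(G))+1=r$.

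I do not expect a serious obstacle here, since Theorem~\ref{th3.2} does all the structural work; the only point that genuinely requires care is the boundary arithmetic around $i_4(\overline{G})=5$. Concretely, I must verify that the ceiling map $i_4(\overline{G})\mapsto\lceil i_4(\overline{G})/2\rceil-1$ stays at least $2$ throughout the range $i_4(\overline{G})\geq 5$ and never equals $1$ there, so that the values $p_v(\overline{M}(G))=1$ and $p_v(\overline{M}(G))\geq 2$ are cleanly separated and neither converse can leak across the threshold. Once that separation is checked, both equivalences follow by the short contrapositive deductions above.
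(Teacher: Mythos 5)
Your proposal is correct as a matter of logic, and for part $(a)$ it coincides with the paper's own one-line argument (Theorem~\ref{th3.2}$(i)$ plus the contrapositive of Theorem~\ref{th3.2}$(ii)$). For part $(b)$, however, you take a genuinely different and much shorter route than the paper. Reading Theorem~\ref{th3.2} at face value --- in particular the exact equality $p_v(\overline{M}(G))=\lceil i_4(\overline{G})/2\rceil-1$ in case $(ii)$ --- the two cases partition all graphs and send $p_v(\overline{M}(G))$ into the disjoint ranges $\{1\}$ and $\{2,3,\dots\}$, so both converses drop out by contraposition exactly as you say; your boundary check that $\lceil i_4(\overline{G})/2\rceil\geq 3$ holds precisely when $i_4(\overline{G})\geq 5$ is the only arithmetic needed and it is right. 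The paper instead proves the converse of $(b)$ by induction on $r$: it first shows a structural claim (when $p_v(\overline{M}(G))=2$ the root $u$ is never an end-vertex of a path in a minimum path covering), then joins end-vertices of two different paths by adding an edge $xy$ to $\overline{G}$, producing a graph $H$ with $p_v(\overline{M}(H))=p_v(\overline{M}(G))-1$ and $i_4(\overline{H})\geq i_4(\overline{G})-2$, and squeezes $i_4(\overline{G})$ into the interval $[2r-1,2r]$ using part $(a)$ and the inductive hypothesis. What that longer route buys is that the backward implication of $(b)$ is derived from statement $(i)$ and the single consequence ``$i_4(\overline{G})\geq 5$ implies $p_v(\overline{M}(G))\geq 2$'' of $(ii)$, rather than from the full equality in $(ii)$; your deduction leans on that full equality for the converse as well. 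That reliance is legitimate given Theorem~\ref{th3.2} as stated, but it is the one dependency worth flagging explicitly: if the cited result supplied only a one-sided bound on $p_v(\overline{M}(G))$ in case $(ii)$, your converse of $(b)$ would no longer follow formally, whereas the paper's graph-modification argument would still go through.
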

\begin{proof}
	\par $(a)$ Considering $(i)$  and the contraposition of $(ii)$ in Theorem ~\ref{th3.2}, we get the necessity and sufficiency.
	\par $(b)$ We use induction on $r$. Let $r=3$.
	\begin{claim}
		If $p_v(\overline{M}(G))=2$, then the root $u$ is not an end-vertex of a path in a minimum path covering of $\overline{M}(G)$.   
	\end{claim}
	\begin{proof}
		If $p_v(\overline{M}(G))=2$, let $P^1$ and $P^2$ be the two paths of a minimum path covering of $\overline{M}(G)$, suppose that  $u$ is an end-vertex of $P^1$. Since $u$ is adjacent in $\overline{M}(G)$ to every vertex in $V$, a vertex in $V$ cannot be an end-vertex of $P^2$, otherwise $\overline{M}(G)$ has a Hamiltonian path. So both ends of $P^2$ are from $V'$. Since the subgraph induced by $V'$ is a complete graph, the other extremity of $P^1$ is in $V$. Let $z$ be the other end of $P^1$, $x'$ and $y'$ the ends of $P^2$. Since $u$ is adjacent to $z$, and $x'$ is adjacent to $y'$. If $z'$ the copy of $z$ belongs to $P^1$, we have $z'$ is adjacent to $x'$ and $y'$, we can construct a Hamiltonian path of $\overline{M}(G)$. If $z'$ belongs to $P^2$, since $z$ is adjacent to $z'$, in this case also $\overline{M}(G)$ has a Hamiltonian path, a contradiction.
	\end{proof}
	If $p_v(\overline{M}(G))=2$, let $x,y\in V$, such that $x$ or its copy and $y$ or its copy are end-vertices of the two different paths in a minimum path covering of $\overline{M}(G)$. We consider the graph $H$ with vertex set $V$, and edge set of its complement $E(\overline{H})=E(\overline{G})\cup\{xy\}$. It is clear that $p_v(\overline{M}(H))=1$, and $i_4(\overline{H})\geq i_4(\overline{G})-2$. Since $p_v(\overline{M}(G))=2$, according to $(a)$ we have $4\geq i_4(\overline{H})$, and $i_4(\overline{G})\geq 5$. It follows that $\lceil \frac{i_4(\overline{G})}{2} \rceil=3$. So from Theorem~\ref{th3.2} $(ii)$, we have Theorem~\ref{th3.3} $(b)$ is true for $r=3$.\par
	We suppose that $(b)$ is true for $3\leq r \leq k$, and let $r=k+1$. 
	\par If $p_v(\overline{M}(G))=k$. Let $x,y\in V$, such that $x$ or its copy and $y$ or its copy are end-vertices of two different paths in a minimum path covering of $\overline{M}(G)$.  We consider the graph $H$ with vertex set $V$, and edge set of its complement $E(\overline{H})=E(\overline{G})\cup\{xy\}$. We have $p_v(\overline{M}(H))= k-1$, and $i_4(\overline{H})\geq i_4(\overline{G})-2$. So by the inductive hypothesis $\lceil \frac{i_4(\overline{H})}{2} \rceil=k$, hence $2k+2 \geq i_4(\overline{G})$. Since $p_v(\overline{M}(G))=k$, by the inductive hypothesis $i_4(\overline{G})\geq 2k+1$. It follows that $\lceil \frac{i_4(\overline{G})}{2} \rceil=k+1$. Theorem~\ref{th3.2} $(ii)$ completes the equivalence. 	
\end{proof}
By combining Theorem ~\ref{th2.2} and Theorem ~\ref{th3.3}, we get the following results.
\begin{theorem}\label{th3.4}
	For any graph $G$ of order $n$, we have\\
	$(a)$ $\lambda(M(G))\leq 2n$ if and only if $i_4(\overline{G})\leq 4$.\\
	$(b)$ For any positive integer $r$, we have $$\lambda(M(G))= 2n+r \text{ if and only if }  \lceil \frac{i_4(\overline{G})}{2} \rceil=r+2.$$
\end{theorem}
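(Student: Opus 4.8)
The plan is to apply Theorem~\ref{th2.2} of Georges et al.\ not to $G$ itself but to the Mycielski graph $M(G)$, and then translate the resulting condition on $p_v(\overline{M}(G))$ into a condition on $i_4(\overline{G})$ via Theorem~\ref{th3.3}. The only extra input needed is the order of $M(G)$, namely $|M(G)|=2n+1$ by Lemma~\ref{lem2.1} with $t=1$. Substituting this order $N:=2n+1$ into Theorem~\ref{th2.2} applied to the graph $M(G)$ gives the two equivalences
\begin{itemize}
	\item $\lambda(M(G))\leq N-1=2n$ if and only if $p_v(\overline{M}(G))=1$;
	\item $\lambda(M(G))=N+s-2=2n+s-1$ if and only if $p_v(\overline{M}(G))=s\geq 2$.
\end{itemize}

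For part $(a)$ I simply chain the first bullet with Theorem~\ref{th3.3}$(a)$: the inequality $\lambda(M(G))\leq 2n$ holds iff $p_v(\overline{M}(G))=1$, which in turn holds iff $i_4(\overline{G})\leq 4$. No arithmetic is involved in this direction.

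For part $(b)$, fix a positive integer $r$. By the second bullet, $\lambda(M(G))=2n+r$ is equivalent to $2n+s-1=2n+r$ with $s\geq 2$, i.e.\ to $p_v(\overline{M}(G))=s=r+1$; since $r\geq 1$ forces $s\geq 2$, the second bullet (and not the first) is the applicable regime. Now I invoke Theorem~\ref{th3.3}$(b)$: writing $\rho:=\lceil i_4(\overline{G})/2\rceil$, we have $p_v(\overline{M}(G))=\rho-1$ precisely when $\rho\geq 3$. Hence $p_v(\overline{M}(G))=r+1$ is equivalent to $\rho-1=r+1$, that is $\lceil i_4(\overline{G})/2\rceil=r+2$, and the side condition $\rho\geq 3$ is exactly $r\geq 1$, our standing hypothesis. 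Composing these equivalences yields $\lambda(M(G))=2n+r$ iff $\lceil i_4(\overline{G})/2\rceil=r+2$.

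The argument is thus a bookkeeping composition of two established equivalences, so the step I expect to require the most care — and the one I would double-check — is the alignment of the index ranges at the boundary, making sure every value of $\lambda(M(G))\geq 2n$ is covered exactly once. Concretely, the smallest case $r=1$ must correspond to $\lceil i_4(\overline{G})/2\rceil=3$ (equivalently $i_4(\overline{G})\in\{5,6\}$, equivalently $p_v(\overline{M}(G))=2$) and must dovetail cleanly with the $i_4(\overline{G})\leq 4$ regime handled by part $(a)$. This consistency holds because Theorem~\ref{th3.3} splits exactly at $i_4(\overline{G})=4$, so the two halves of the present statement partition the possibilities without gap or overlap.
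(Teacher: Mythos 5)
Your proposal is correct and matches the paper's argument: the paper obtains Theorem~\ref{th3.4} precisely by applying Theorem~\ref{th2.2} to the graph $M(G)$ of order $2n+1$ and composing with the equivalences of Theorem~\ref{th3.3}. Your explicit index bookkeeping ($s=r+1$, $\rho=r+2$) is the same computation the paper leaves implicit.
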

\par  Next, we give applications of this previous theorem to the $\lambda$-number of the Mycielski of certain graphs.\par  
If the diameter of $G$ is $1$ or $2$, then $diam(M(G))=2$, we can conclude from Theorem~\ref{th3.4} that $\lambda(M(G))=2n+max\{2,\lceil \frac{i_4(\overline{G})}{2} \rceil \}-2$.
\begin{cor}\label{cor3.2}
	Let $G$ be a graph of order $n$, if the clique number $\omega(G)\leq 4 $, then $ \lambda(M (G))\leq 2n$.
\end{cor}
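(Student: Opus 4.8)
The plan is to deduce the bound from the matching characterization in Theorem~\ref{th3.4}$(a)$, which asserts that $\lambda(M(G))\le 2n$ holds precisely when $i_4(\overline{G})\le 4$. It therefore suffices to show that the clique hypothesis $\omega(G)\le 4$ forces the number of vertices left uncovered by a maximum $4$-star matching of $\overline{G}$ to be at most $4$.

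The first step is to transport the clique condition into the complement. Since a clique in $G$ is exactly an independent set in $\overline{G}$, we have $\alpha(\overline{G})=\omega(G)\le 4$, where $\alpha$ denotes the independence number. This reformulation is what converts a statement about $G$ into one about a maximum $4$-star matching of $\overline{G}$.

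The key observation is that in any \emph{maximum} $4$-star matching $H$ of $\overline{G}$, the set of vertices not covered by $H$ is an independent set of $\overline{G}$. Indeed, if two uncovered vertices $x$ and $y$ were adjacent in $\overline{G}$, then adjoining the edge $xy$ as a new $K_{1,1}$ component would produce a $4$-star matching of strictly larger order, contradicting the maximality of $H$. Consequently the uncovered vertices number at most $\alpha(\overline{G})$, that is, $i_4(\overline{G})\le \alpha(\overline{G})=\omega(G)\le 4$, and Theorem~\ref{th3.4}$(a)$ then yields $\lambda(M(G))\le 2n$.

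The argument is essentially immediate once the right reformulation is in hand, so there is no serious obstacle; the only point requiring a moment's care is the maximality exchange, where one must verify that the newly formed $K_{1,1}$ is an admissible star (which it is, since $1\le 4$) and that it is vertex-disjoint from $H$ (which it is, because $x$ and $y$ were uncovered). It is worth noting that the two appearances of the constant $4$ — in the clique bound and in the $4$-star matching — match exactly the threshold in Theorem~\ref{th3.4}$(a)$, which is precisely what makes the chain of inequalities close.
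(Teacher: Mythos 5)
Your proof is correct and follows essentially the same route as the paper: the paper argues the contrapositive (if $\lambda(M(G))>2n$ then $i_4(\overline{G})\geq 5$, hence $\omega(G)\geq 5$), while you argue the direct implication, and in both cases the crux is the observation that the vertices left uncovered by a maximum $4$-star matching of $\overline{G}$ form an independent set of $\overline{G}$, i.e.\ a clique of $G$. Your version merely makes explicit the maximality exchange that the paper leaves implicit.
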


\begin{proof}
	By Theorem ~\ref{th3.4} $(a)$ if $\lambda(M(G))> 2n$, then $i_4(\overline{G})\geq 5$. This means that $\omega(G)\geq 5$. 
\end{proof}
The graphs with clique number less or equal to $4$ in Corollary ~\ref{cor3.2} include trees, planar graphs, and subcubic graphs. \par
If $X$ is any subset of $V$, we denote $N_G(X)$ the set of all vertices in $V$ adjacent to at least one vertex from $X$ in $G$. In \cite{lin}, a criterion for a graph to have a perfect $m$-star matching is given, this appeared also in \cite{kano,kirk,las}.
\begin{theorem}\label{th3.5}\cite{kano,kirk,lin,las}
	A graph $G$ has a perfect $m$-star matching if and only if for any independent set $S$ in $G$, $|N_G(S)|\geq  |S|/m $. 
\end{theorem}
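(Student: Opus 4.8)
The plan is to prove the two directions separately: necessity is a short counting argument, while sufficiency is the substantive part.

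For \emph{necessity}, suppose $G$ has a perfect $m$-star matching $F$, that is, a partition of $V(G)$ into stars $K_{1,i}$ with $1\le i\le m$, and fix an independent set $S$. I would split the stars of $F$ meeting $S$ into the family $\mathcal{A}$ of those whose centre lies in $S$ and the family $\mathcal{B}$ of those whose centre avoids $S$ but which contain a leaf in $S$; since $S$ is independent, every vertex of $S$ lies in exactly one such star (a leaf of $S$ cannot share its star's centre, as that centre would be an adjacent vertex of $S$). A star of $\mathcal{A}$ then contributes only its centre to $S$, while a star of $\mathcal{B}$ contributes at most its $m$ leaves, so $|S|\le |\mathcal{A}|+m|\mathcal{B}|$. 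Selecting one leaf from each star of $\mathcal{A}$ and the centre from each star of $\mathcal{B}$ yields $|\mathcal{A}|+|\mathcal{B}|$ distinct vertices of $N_G(S)$, whence $|N_G(S)|\ge |\mathcal{A}|+|\mathcal{B}|\ge (|\mathcal{A}|+m|\mathcal{B}|)/m\ge |S|/m$, using $m\ge 1$. Applied to a singleton this also recovers the fact that $G$ must have no isolated vertex.

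For \emph{sufficiency} I would argue by contradiction using an extremal packing. Assuming the neighbourhood condition holds, choose among all spanning disjoint unions of stars with at most $m$ leaves a subgraph $F$ covering the maximum number of vertices, and suppose the uncovered set $U$ is nonempty. A quick maximality check shows that $U$ is independent and that no vertex of $U$ has an uncovered neighbour, nor a neighbour that is the centre of a star with fewer than $m$ leaves; otherwise adding a $K_{1,1}$ or appending a leaf would cover more vertices. The goal is then to enlarge $U$ to an independent set $S$ for which $|N_G(S)|<|S|/m$, contradicting the hypothesis.

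The heart of the argument, and the step I expect to be the main obstacle, is this enlargement. Following the alternating-path philosophy of matching theory, $S$ should consist of $U$ together with all leaves reachable from $U$ through admissible swaps, and $N_G(S)$ should turn out to be a set of \emph{saturated} centres, each carrying a full complement of $m$ leaves that already lie in $S$; the surplus contributed by the nonempty $U$ then forces $|S|>m\,|N_G(S)|$. The difficulty is that, unlike ordinary Hall/König matching, every vertex must play the mutually exclusive role of centre or leaf and each star must retain at least one leaf, so rerouting the single leaf of a $K_{1,1}$ to cover a vertex of $U$ may uncover its former centre; the swap analysis must track these one-for-one exchanges and certify that none of them repairs the packing. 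As a cleaner alternative one can first check that the stated condition is equivalent to the deficiency condition $i(G-T)\le m\,|T|$ for all $T\subseteq V(G)$, where $i(\cdot)$ counts isolated vertices, the equivalence following by taking $T=N_G(S)$ in one direction and letting $S$ be the set of isolated vertices of $G-T$ in the other, and then establish the factor by induction on $|V(G)|$, contracting a carefully chosen star and verifying that the deficiency condition survives the contraction.
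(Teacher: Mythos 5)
The paper does not prove this statement at all: Theorem~3.5 is quoted with citations to Amahashi--Kano, Kirkpatrick--Hell, Las Vergnas and Lin--Lam, so there is no in-paper argument to compare yours against. Judged on its own, your \emph{necessity} direction is complete and correct: splitting the stars of the factor that meet $S$ into those contributing their centre ($\mathcal{A}$) and those contributing leaves ($\mathcal{B}$), the counts $|S|\le|\mathcal{A}|+m|\mathcal{B}|$ and $|N_G(S)|\ge|\mathcal{A}|+|\mathcal{B}|$ are both justified (vertex-disjointness of the stars gives distinctness of the chosen witnesses, and independence of $S$ guarantees no star contributes both its centre and a leaf), and the inequality $|\mathcal{A}|+|\mathcal{B}|\ge(|\mathcal{A}|+m|\mathcal{B}|)/m$ is valid for $m\ge1$.

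The \emph{sufficiency} direction, however, is not a proof but a plan, and the gap sits exactly where you say it does. From a maximum $m$-star packing with nonempty uncovered set $U$ you correctly extract that $U$ is independent and that no vertex of $U$ sees an uncovered vertex or the centre of an unsaturated star, but the theorem is not yet contradicted: a vertex of $U$ may still see leaves of saturated stars, and the entire content of the theorem is to show that chasing these through admissible swaps terminates in an independent set $S\supseteq U$ with $|N_G(S)|<|S|/m$. You describe what $S$ ``should'' be and note that the one-for-one leaf/centre exchanges (e.g.\ rerouting the leaf of a $K_{1,1}$) are the obstacle, but you never define the reachability structure, never prove the resulting $S$ is independent, never prove $N_G(S)$ consists only of saturated centres whose $m$ leaves all lie in $S$, and never verify that no swap sequence actually augments the packing. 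Without these the contradiction is not established. The fallback route is in the same state: the equivalence of the neighbourhood condition with the deficiency condition $i(G-T)\le m|T|$ is correctly sketched (take $T=N_G(S)$ one way, $S$ the isolated vertices of $G-T$ the other), but ``establish the factor by induction, contracting a carefully chosen star'' is precisely the nontrivial half of the Amahashi--Kano theorem; which star to contract, why the deficiency condition survives the contraction, and how a factor of the contracted graph lifts back are all left unaddressed. Either route can be made to work, but as written the sufficiency direction asserts the conclusion rather than deriving it.
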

\begin{cor}\label{cor3.3}
	For a graph $G$ of order $n$ and maximum degree $\bigtriangleup \leq n-2 $. If $ 3(n-1)+ \delta \geq 4 \bigtriangleup $, then $ \lambda(M(G))\leq 2n$.
\end{cor}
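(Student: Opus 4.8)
The plan is to prove the stronger statement that $\overline{G}$ admits a \emph{perfect} $4$-star matching, so that $i_4(\overline{G})=0\le 4$; the bound $\lambda(M(G))\le 2n$ then follows immediately from Theorem~\ref{th3.4}$(a)$. By Theorem~\ref{th3.5} applied to $\overline{G}$ with $m=4$, it suffices to verify that $|N_{\overline{G}}(S)|\ge |S|/4$ for every independent set $S$ of $\overline{G}$. Since an independent set of $\overline{G}$ is exactly a clique of $G$, I would fix a clique $S$ of $G$ with $|S|=s$ and estimate $|N_{\overline{G}}(S)|$, the number of vertices of $V\setminus S$ having at least one $G$-non-neighbour in $S$.

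The engine of the argument is a double count of the non-edges of $G$ joining $S$ to $V\setminus S$. Looking from $S$: a vertex $v\in S$ is adjacent in $G$ to the $s-1$ other vertices of the clique, hence has exactly $n-1-deg_G(v)\ge n-1-\bigtriangleup$ non-neighbours in $V\setminus S$, and summing over $v\in S$ gives the lower bound $s(n-1-\bigtriangleup)$ for this count. Looking from $V\setminus S$: only the vertices of $N_{\overline{G}}(S)$ contribute, and each $w\in V\setminus S$ has at most $n-1-deg_G(w)\le n-1-\delta$ non-neighbours in $S$, because at most $n-1-s$ of its neighbours can lie outside $S$. This yields the upper bound $|N_{\overline{G}}(S)|\,(n-1-\delta)$ for the same count. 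Combining the two estimates,
$$|N_{\overline{G}}(S)|\ \ge\ \frac{s\,(n-1-\bigtriangleup)}{\,n-1-\delta\,},$$
where the assumption $\bigtriangleup\le n-2$ guarantees $n-1-\delta\ge n-1-\bigtriangleup\ge 1>0$, so the division is legitimate.

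It then remains to check that the hypothesis forces the right-hand side to be at least $s/4$. This reduces to $4(n-1-\bigtriangleup)\ge n-1-\delta$, which rearranges precisely to $3(n-1)+\delta\ge 4\bigtriangleup$; thus the criterion of Theorem~\ref{th3.5} holds for every clique $S$ of $G$, so $\overline{G}$ has a perfect $4$-star matching. I expect the only delicate point to be choosing the two asymmetric degree bounds correctly, namely using the \emph{maximum} degree $\bigtriangleup$ on the clique side and the \emph{minimum} degree $\delta$ on the receiving side, since it is exactly this asymmetry that reproduces the stated inequality; the remaining manipulations are routine, and one should only keep an eye on the positivity of $n-1-\delta$, which is secured by the condition $\bigtriangleup\le n-2$.
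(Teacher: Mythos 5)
Your proposal is correct and follows essentially the same route as the paper: both arguments double-count the edges of $\overline{G}$ between a clique $S$ of $G$ and its $\overline{G}$-neighbourhood (you phrase this via non-edges of $G$ and the degrees $\bigtriangleup,\delta$, the paper via $\overline{\bigtriangleup}=n-1-\delta$ and $\overline{\delta}=n-1-\bigtriangleup$), deduce $|N_{\overline{G}}(S)|\geq |S|/4$ from the hypothesis, and conclude via Theorem~\ref{th3.5} and Theorem~\ref{th3.4}$(a)$. Your explicit check that $n-1-\delta>0$, guaranteed by $\bigtriangleup\leq n-2$, is a point the paper leaves implicit.
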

\begin{proof}
	Let $\overline{\bigtriangleup}$ and $\overline{\delta}$ denote respectively the maximum and minimum degree of the complement graph $\overline{G}$.
	For any independent set $S$ in $\overline{G}$, let $|E_{\overline{G}}(S)|$ denote the number of edges incident to the vertices of $S$ in $\overline{G}$, we have
	\begin{equation}\label{eq1}
|N_{\overline{G}}(S)| \overline{\bigtriangleup} \geq |E_{\overline{G}}(S)| \geq  \overline{\delta} |S| 
	\end{equation}
	 \par If $3(n-1)+ \delta \geq 4 \bigtriangleup $, since $\overline{\bigtriangleup}=(n-1)-\delta$ and $\overline{\delta}=(n-1)-\bigtriangleup$ means  $ 4\overline{\delta}\geq \overline{\bigtriangleup} $, therefore from Inequality (\ref{eq1}) we get that $|N_{\overline{G}}(S)|\geq |S|/4 $, for any $S$  independent set in $\overline{G}$. Then by Theorem ~\ref{th3.5}, $\overline{G}$ has a perfect $4$-star matching. Hence from Theorem ~\ref{th3.4} $(a)$, we have  $ \lambda(M(G))\leq 2n $.
\end{proof}
From Corollary ~\ref{cor3.3}, any regular graph $G$ of order $n$, except complete graphs,  has  $\lambda(M(G))\leq 2n$. In \cite{lin}, it is shown that for complete graph $\lambda(M(K_2))=4$ and  $\lambda(M(K_n))=2n+\lceil \frac{n}{2} \rceil -2$ for $n\geq 3$. Next, we determine the exact $\lambda$-number of the Mycielski of complete $k$-partite graphs.
\begin{cor}
	Let $G$ be a complete $k$-partite graph of order $n$, where the partite sets consist of $p$ sets of order greater or equal $2$  and $q$  singletons.\\
	$\bullet$ If $q\leq 4$, then $\lambda(M(G))=2n$.\\
	$\bullet$ If $q\geq 5$, then $\lambda(M(G))=2n+\lceil \frac{q}{2} \rceil -2.$
\end{cor}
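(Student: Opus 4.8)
The plan is to reduce the whole statement to a single computation of $i_4(\overline{G})$ and then feed it into Theorem~\ref{th3.4}. First I would describe the complement. Since $G$ is complete $k$-partite, two vertices are adjacent in $\overline{G}$ precisely when they lie in the same part; hence $\overline{G}$ is the disjoint union of $p$ cliques of order at least $2$ (one per part of size $\geq 2$) together with $q$ isolated vertices (one per singleton part). This is the structural observation that makes $s_4(\overline{G})$, and therefore $i_4(\overline{G})$, tractable.

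The crux is to prove that $i_4(\overline{G})=q$. Each isolated vertex of $\overline{G}$ carries no incident edge and so cannot belong to any star component $K_{1,i}$ with $i\geq 1$; thus all $q$ of them are unmatched in \emph{every} $4$-star matching, giving $i_4(\overline{G})\geq q$. For the reverse inequality I would exhibit a $4$-star matching covering every vertex of each clique: a clique $K_m$ with $m\geq 2$ can be partitioned into parts of sizes $2$ and $3$, because every integer $m\geq 2$ is of the form $2a+3b$ with $a,b\geq 0$, and each such part, being itself a clique, is a $K_{1,1}$ or a $K_{1,2}$. Consequently the $p$ large cliques admit a perfect $4$-star matching and contribute no unmatched vertices, whence $i_4(\overline{G})\leq q$. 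Combining the two bounds yields $i_4(\overline{G})=q$. (One could instead verify the criterion of Theorem~\ref{th3.5}, but the explicit covering is cleaner and self-contained.)

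It then remains to translate $i_4(\overline{G})=q$ through Theorem~\ref{th3.4}. For $q\leq 4$, part $(a)$ gives $\lambda(M(G))\leq 2n$; for the matching lower bound I note that $G$ is complete multipartite and hence of diameter at most $2$ with no isolated vertices, so by Lemma~\ref{lem2.5} we have $diam(M(G))=2$, which forces all $|M(G)|=2n+1$ vertices to receive pairwise distinct labels and thus $\lambda(M(G))\geq 2n$; together these give $\lambda(M(G))=2n$. For $q\geq 5$ we have $\lceil \frac{i_4(\overline{G})}{2} \rceil=\lceil \frac{q}{2} \rceil\geq 3$, so putting $r=\lceil \frac{q}{2} \rceil-2\geq 1$ and applying part $(b)$ directly yields $\lambda(M(G))=2n+r=2n+\lceil \frac{q}{2} \rceil-2$. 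The only genuine content is the combinatorial identity $i_4(\overline{G})=q$, and the step I expect to require the most care is the perfect covering of the large cliques; this, however, reduces to the elementary fact that every integer at least $2$ is a nonnegative combination of $2$ and $3$, so the main obstacle is really just ensuring this bookkeeping is airtight rather than any deep difficulty.
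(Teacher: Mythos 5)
Your proof is correct and follows essentially the same route as the paper: identify $\overline{G}$ as $p$ cliques of order at least $2$ plus $q$ isolated vertices, deduce $i_4(\overline{G})=q$, and apply Theorem~\ref{th3.4} together with the diameter-two lower bound. The only difference is that you spell out the $2a+3b$ partition of each clique to justify $i_4(\overline{G})=q$, which the paper states without detail (note only that a $3$-clique \emph{contains} a spanning $K_{1,2}$ rather than \emph{being} one).
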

\begin{proof}
	We have $\overline{G}$ is formed of $p$ connected components that are complete graphs of order greater or equal to $2$, and $q$ isolated vertices. Therefore $i_4(\overline{G})=q$. If $q\leq 4$, by Theorem ~\ref{th3.4} $(a)$,  $\lambda(M(G))\leq 2n$. Since $diam(M(G))=2$, it follows that $\lambda(M(G))=2n$. 
	If $q\geq 5$, then by Theorem ~\ref{th3.4} $(b)$, $\lambda(M(G))=2n+\lceil \frac{q}{2} \rceil -2$.~\end{proof}
Let $G_1,G_2$ be two disjoint graphs. The disjoint union of $G_1$ and $G_2$, denoted by $G_1\cup G_2$, is the graph with vertex set $V(G_1)\cup V(G_2)$ and edge set $E(G_1)\cup E(G_2)$. The joint of $G_1$ and $G_2$ denoted  $G_1\vee G_2$ is the graph obtained from $G_1\cup G_2$ by joining each vertex of $G_1$ to each vertex of $G_2$.
\begin{cor}
	Let $G_1,G_2,\ldots ,G_k$ be a collection of disjoint graphs having respectively $n_1,n_2,\ldots, n_k$ vertices. Let $n=\sum_{i=1}^{k} n_i$, then $\lambda(M(G_1\vee G_2\vee \ldots \vee G_k))=2n+max\{2,\lceil \frac{I}{2} \rceil \}-2$, where $I=\sum_{i=1}^{k}i_4(\overline{G_i})$.
\end{cor}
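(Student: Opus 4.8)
The plan is to reduce the statement to the remark that immediately precedes it, which records that for every graph $G$ of diameter $1$ or $2$ one has $\lambda(M(G))=2n+max\{2,\lceil \frac{i_4(\overline{G})}{2} \rceil\}-2$. Writing $G=G_1\vee G_2\vee \ldots \vee G_k$, it then suffices to verify two facts: that $diam(G)\leq 2$, and that $i_4(\overline{G})=I$. Once both are in place the displayed formula applies verbatim with $i_4(\overline{G})$ replaced by $I$, and separating the two regimes of the $max$ gives exactly the claimed case split.

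The first step is to compute the complement. Taking complements turns the join into a disjoint union: two vertices lying in distinct factors $G_i,G_j$ are adjacent in $G$, hence non-adjacent in $\overline{G}$, while inside a single factor $G_i$ the adjacencies of $\overline{G}$ are precisely those of $\overline{G_i}$. Therefore $\overline{G}=\overline{G_1}\cup \overline{G_2}\cup \ldots \cup \overline{G_k}$. The key observation is that the $4$-star matching number is additive over disjoint unions: each star component of a $4$-star matching is connected and so lies entirely within one $\overline{G_i}$, so any $4$-star matching of $\overline{G}$ splits into $4$-star matchings of the factors, and conversely the union of maximum $4$-star matchings of the $\overline{G_i}$ is a $4$-star matching of $\overline{G}$. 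Hence $s_4(\overline{G})=\sum_{i=1}^{k} s_4(\overline{G_i})$, and subtracting from $n=\sum_{i=1}^{k} n_i$ yields $i_4(\overline{G})=\sum_{i=1}^{k} i_4(\overline{G_i})=I$.

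For the diameter, assume as usual $k\geq 2$, so that the join is genuine. Every vertex of $G$ lies in some factor and is adjacent to all vertices of the other factors; thus two vertices in distinct factors are at distance $1$, and two vertices in the same factor are joined through any vertex of another factor, hence at distance at most $2$. So $diam(G)\leq 2$, and in particular $G$ has no isolated vertices, whence Lemma~\ref{lem2.5} gives $diam(M(G))=2$. Feeding $i_4(\overline{G})=I$ into the preceding remark now gives $\lambda(M(G))=2n+max\{2,\lceil \frac{I}{2} \rceil\}-2$, which reads $\lambda(M(G))=2n$ when $I\leq 4$ (then $\lceil \frac{I}{2}\rceil\leq 2$) and $\lambda(M(G))=2n+\lceil \frac{I}{2}\rceil-2$ when $I\geq 5$.

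There is no serious obstacle here: the only point needing an argument is the additivity $i_4(\overline{G})=I$, and the only care required is to invoke the convention $k\geq 2$ so that the join forces $diam(G)\leq 2$ and thereby legitimizes the use of the preceding remark. The resolution of the $max$ into the two regimes $I\leq 4$ and $I\geq 5$ is then purely arithmetic.
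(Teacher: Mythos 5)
Your proposal is correct and follows essentially the same route as the paper: compute $\overline{G}=\overline{G_1}\cup\ldots\cup\overline{G_k}$, use additivity of $i_4$ over disjoint unions to get $i_4(\overline{G})=I$, note $diam(G)\leq 2$ so all of $M(G)$ must receive distinct labels, and apply Theorem~\ref{th3.4}. The only difference is cosmetic: you invoke the packaged remark preceding the corollary while the paper splits directly into the cases $I\leq 4$ and $I\geq 5$ of Theorem~\ref{th3.4}, and you supply slightly more detail (the additivity argument and the $k\geq 2$ caveat) than the paper does.
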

\begin{proof}
	Let $G=G_1\vee G_2\vee \ldots \vee G_k$, we have $\overline{G}=\overline{G_1}\cup\overline{G_2}\cup\ldots \cup \overline{G_k}$. It follows that $i_4(\overline{G})= \sum_{i=1}^{k}i_4(\overline{G_i})=I$. Thus, by Theorem ~\ref{th3.4} $(a)$ if $I\leq 4$, then $\lambda(M(G))\leq 2n$.  Since $diam(G)=2$, it follows that $\lambda(M(G))= 2n$. If $I\geq 5$, from Theorem ~\ref{th3.4} $(b)$, $\lambda(M(G))= 2n+ \lceil \frac{I}{2} \rceil -2 $.
\end{proof}
\subsection{Graphs with $\lambda(M(G))=n+1$}\label{sec3.3}
For $k \geq 1$, the \textit{$k$th power} of a graph $G$ is the graph $G^k$ with vertex set $V$ and edge set $E(G^k)=\{v_iv_j: 1\leq d_G(v_i,v_j)\leq k \}$. Then the square of a graph $G^2$ has the edge set of its complement graph $E(\overline{G^2})=\{v_iv_j:  d_G(v_i,v_j)\geq 3 \}$. Next we give a condition, so that $\lambda (M(G))=n+1$. 
\begin{lemma}\label{lemm3.1}
	In a graph $G$ of order $n$, if the vertex set $V$ can be partitioned into $k\geq 1$ vertex-disjoint cliques in $\overline{G^2}$, such that at least $k-1$ cliques are of order greater or equal $3$. Then $\lambda(M(G))=n+1$.	
\end{lemma}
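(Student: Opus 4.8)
The lower bound is immediate: since $deg_M(u)=n$, Theorem~\ref{th3.1} (equivalently Lemma~\ref{lem2.6}) already gives $\lambda(M(G))\ge n+1$. The whole content is therefore to exhibit an $L(2,1)$-labeling of $M(G)$ of span exactly $n+1$. First I would pin down the rigid structure of any such labeling. With labels confined to $\{0,1,\dots,n+1\}$, the $n$ copies $v'_i$ are pairwise at distance $2$ (Lemma~\ref{lem2.4}), hence receive $n$ distinct labels, leaving only two labels unused; since each copy is adjacent to $u$, the value $f(u)$ together with $f(u)\pm 1$ must avoid all copies, which forces $f(u)$ to an end of the range. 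Taking $f(u)=n+1$, the copies must realize a bijection $g$ onto $\{0,\dots,n-1\}$ and each original $v_i$ must take a value in $\{0,\dots,n\}$, the single spare label being $n$. Using Lemma~\ref{lem2.4} the whole problem then reduces to choosing $g$ and the originals $f(v_i)\in\{0,\dots,n\}$ so that (O1) the originals form an $L(2,1)$-labeling of $G$, (O2) $f(v_i)\ne g(v_i)$, and (O3) for every edge $v_iv_j$ one has $|f(v_i)-g(v_j)|\ge 2$ and $|g(v_i)-f(v_j)|\ge 2$, while pairs at distance $2$ need only receive labels distinct from each other and from the relevant copies.

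The plan is to read the hypothesis as a proper coloring of $G^2$: the cliques $C_1,\dots,C_k$ of $\overline{G^2}$ are exactly its color classes, so every pair of vertices at distance $\le 2$ (in particular every edge of $G$) joins two different classes, while inside a class all distances are $\ge 3$ and impose no constraint at all between two originals or between an original and a non-own copy. I would let the copies of each $C_\ell$ fill a contiguous block $B_\ell\subset\{0,\dots,n-1\}$ of length $m_\ell=|C_\ell|$ starting at $\beta_\ell=\sum_{j<\ell}m_j$, ordering the blocks so that the (at most one) clique of order $\le 2$ comes last, and then place every original of $C_\ell$ in the interior $[\beta_\ell+1,\ \beta_\ell+m_\ell-2]$ of its own block. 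Because the originals never sit at a block boundary, the only dangerous pairs—those in consecutive blocks, whose labels differ by $1$ across the seam—are handled automatically: an interior original is at distance $\ge 2$ from every label of every other block, which settles (O3) and the adjacency part of (O1), while distance-$2$ separation is free since distinct blocks carry distinct labels. This interior is nonempty precisely when $m_\ell\ge 3$, which is exactly the hypothesis, and this is where it is used.

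The remaining point, and the one I expect to be delicate, is the own-copy condition (O2) inside a block of the minimum order $3$, where the interior is the single label $\beta_\ell+1$: the unique vertex $w$ whose copy sits at the center cannot be placed there. I would resolve this with the freedom still left in the construction, namely the order of the copies within each block and the spare label $n$. Concretely, one frees a boundary slot $\beta_\ell$ (or $\beta_\ell+2$) for the original of $w$ by choosing the top copy of $B_{\ell-1}$ (resp.\ the bottom copy of $B_{\ell+1}$) to be a non-neighbor of $w$, which is possible because each neighboring big block supplies $\ge 3$ candidates; the originals of the final small clique, placed at the top of the range on the labels $n-1$ and $n$, are safe because the only copy the spare label $n$ can clash with is the one labeled $n-1$, which I keep inside that same last clique and hence at distance $\ge 3$.

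The genuine obstacle is to make all these local boundary choices simultaneously consistent over the whole block ordering—that is, to pick the within-block orders of $g$ so that every central vertex of a size-$3$ interior clique gets a free boundary (or spare) slot and no two such uses of label $n$ collide. I expect this to follow by ordering the cliques suitably and by a short selection/Hall-type argument exploiting the $\ge 3$ vertices available in each class, but it is the step that requires the most care. Granting it, one finishes exactly as in the proof of Theorem~\ref{th3.1}, verifying the five families of pairs (copy–copy, original–original, original–copy, root–copy, root–original) to confirm that $f$ is an $L(2,1)$-labeling of $M(G)$ of span $n+1$, whence $\lambda(M(G))=n+1$.
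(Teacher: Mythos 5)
Your overall strategy is the same as the paper's: view the cliques of $\overline{G^2}$ as color classes, give the $n_r$ copies of each clique $S_r$ a contiguous block of $n_r$ labels, and exploit the fact that originals within one clique are pairwise at distance $\geq 3$ in $M(G)$ (and at distance $3$ from each other's copies) to compress all the originals of a clique onto one or two labels of that block, with the root at $n+1$. The problem is the step you yourself flag and then skip. By insisting that every original sit strictly in the \emph{interior} of its block, you force, in every clique of order exactly $3$, the central original to coincide with its own copy (a distance-$2$ pair), and your repair --- push that one original to a boundary slot $\beta_\ell$ or $\beta_\ell+2$, or onto the spare label $n$, after re-choosing boundary copies of the neighboring blocks --- is only asserted: ``I expect this to follow by \dots a short selection/Hall-type argument \dots Granting it, one finishes.'' That consistency argument (the boundary-copy choices of block $\ell-1$ serve both its own center-collision and block $\ell$'s seam; uses of the spare label $n$ by originals from different cliques may themselves be at distance $\leq 2$) is exactly the combinatorial content of the lemma, and it is not supplied. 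As written, the proof is incomplete.

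For comparison, the paper's construction sidesteps the collision rather than repairing it: in block $r$ it labels one original $v_{1,r}\mapsto\psi_r$ and \emph{all} other originals $\mapsto\psi_r+1$, while the copies fill the whole block with the bottom two swapped ($v'_{1,r}\mapsto\psi_r+1$, $v'_{2,r}\mapsto\psi_r$, $v'_{i,r}\mapsto\psi_r+i-1$ for $i\geq 3$). Then no original ever equals its own copy by construction, and since $n_r\geq 3$ the originals' labels $\psi_r,\psi_r+1$ are at distance $\geq 2$ from every label outside the block except across one seam, where the only constraint is the single non-adjacency $v_{n_r,r}\not\sim v_{1,r+1}$. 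This is arranged greedily because a vertex of $S_{r+1}$ has at most one neighbor in $S_r$ (its vertices being pairwise at distance $\geq 3$ in $G$) and $|S_r|\geq 3$; the one possibly small clique is placed last and handled separately. If you adopt this placement, your block framework goes through without the Hall-type selection you were missing.
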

\begin{proof}
	Let $V=\cup_{r=1}^{k}S_r$, such that  $S_r$ are vertex-disjoint cliques in $\overline{G^2}$ of order $|S_r|=n_r\geq 3$ for $1\leq r\leq k-1$, and $|S_k|=n_k\geq 1$, where $\sum_{r=1}^{k}n_r=n$. For $1\leq r\leq k $, let us denote $S_r=\{v_{i,r}: 1\leq i\leq n_r\}$,  $v'_{i,r}$ is the copy of the vertex $v_{i,r}$, and $u$ is the root of $M(G)$. We have  $d_G(v_{i,r},v_{j,r})\geq 3$ for any two distinct vertices in $S_r$, so a vertex in $S_{r+1}$ can be adjacent to at most one vertex in $S_r$. For $1\leq r\leq k-1$, the cliques $S_r$  in $\overline{G^2}$  are symmetric of order greater or equal $3$, we suppose without loss of generality that $d_G(v_{n_r,r},v_{1,r+1}))\geq 2$, for $1\leq r\leq k-1$. Let $\psi_1=0$ and for $r\geq 2$, $\psi_r=\sum_{j=1}^{r-1}n_j$.  With respect to the previous assumption, we label the vertices of $M(G)$ as following.  
	\begin{itemize}
		\item For $1\leq r\leq k-1$, define   $f(v_{1,r})=\psi_r$. For $2\leq i\leq n_r$, $f(v_{i,r})=\psi_r+1$. Also $f(v'_{1,r})=\psi_r+1$, and $f(v'_{2,r})=\psi_r$.  For $3\leq i\leq n_r$, $f(v'_{i,r})=\psi_r+i-1$.
		\item If $|S_k|=1$, let $f(v_{1,k})=n$, and $f(v'_{1,k})=n-1$.
		\item If  $|S_k|=2$, let $f(v_{1,k})=n-2$, $f(v'_{1,k})=n-1$, $f(v_{2,k})=n-1$, and $f(v'_{2,k})=n-2$.
		\item If $|S_k|\geq 3$, define $f(v_{1,k})=\psi_k$. For $2\leq i\leq n_r$, $f(v_{i,k})=\psi_k+1$. Also $f(v'_{1,k})=\psi_k+1$, and $f(v'_{2,k})=\psi_k$. For $3\leq i\leq n_k$, $f(v'_{i,k})=\psi_k+i-1$.
	\end{itemize} 
	\par Finally, label the root $f(u)=n+1$. We have $d_G(v_{i,r},v_{j,r}))\geq 3$, and for $1\leq r\leq k-1$ we have $d_G(v_{n_r,r},v_{1,r+1}))\geq 2$. This means by Lemma~\ref{lem2.4} that $d_M(v_{i,r},v_{j,r}))\geq 3$, $d_M(v'_{i,r},v_{j,r}))= 3$, and  $d_M(v'_{n_r,r},v_{1,r+1}))\geq 2$. The labeling $f$ is an $L(2,1)$-labeling of $M(G)$ with span $n+1$.
	Hence $\lambda(M(G))=n+1$.
\end{proof}
\par In the case of the empty graph $\overline{K_n}$, we have $M(\overline{K_n})\cong K_{1,n}\cup \overline{K_n} $. Since $\lambda(K_{1,n})=n+1$, we have $\lambda (M(\overline{K_n}))=n+1$, we can get the same result using Lemma~\ref{lemm3.1}. We are now interested in some connected graphs, we consider the graph path $P_n$ and cycle $C_n$.
\par Let $P_n$ denote the graph path of order $n\geq 3$, with vertex set $V(P_n)=\{v_1,\ldots,v_n\}$ and edge set $E(P_n)=\{v_iv_{i+1}:1\leq i\leq n-1\}$. Denote $V(M( P_n))= V(P_n)\cup  \{v'_i : 1\leq i \leq n\} \cup \{u\}$, where $v'_i$ is the copy of the vertex $v_i$, and $u$ is the root of $M( P_n)$. 
\begin{prop}\label{th3.6}
	$$\lambda(M(P_n))=\begin{cases} 
	6 \hspace{35pt}if\,\, n=3,4, \\
	7 \hspace{35pt}if\,\, n=5, \\
	n+1 \hspace{15pt}if\,\,n \geq 6.
	\end{cases} 	$$
\end{prop}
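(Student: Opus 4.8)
The plan is to establish the three values of $\lambda(M(P_n))$ by combining the general bounds from Theorem~\ref{th3.1} with an analysis of the $4$-star matching number of $\overline{P_n}$ via Theorem~\ref{th3.4}, and—for the large-$n$ regime—by invoking Lemma~\ref{lemm3.1}. First I would dispatch the case $n\geq 6$, which I expect to be the cleanest. The lower bound $\lambda(M(P_n))\geq n+1$ is immediate from Theorem~\ref{th3.1}, since the root $u$ has degree $n$. For the matching upper bound I would exhibit a partition of $V(P_n)$ into cliques of $\overline{P_n^2}$ meeting the hypotheses of Lemma~\ref{lemm3.1}: vertices $v_i,v_j$ are adjacent in $\overline{P_n^2}$ exactly when $|i-j|\geq 3$. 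Grouping consecutive indices into blocks of size $\geq 3$ and checking that within each block the pairwise $P_n$-distance is $\geq 3$ gives such a partition once $n\geq 6$ (for instance take blocks $\{v_1,v_4,v_7\},\dots$, or more simply partition into one or two cliques of the right size). Lemma~\ref{lemm3.1} then yields $\lambda(M(P_n))=n+1$ directly.

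For the small cases $n=3,4,5$ the graph $M(P_n)$ has diameter $2$ (since $diam(P_n)\leq 4$ and, by Lemma~\ref{lem2.5}, $diam(M(P_n))=\min(\max(2,diam(P_n)),4)$—here I would check that for $n\leq 5$ the relevant distances collapse so that $diam(M(P_n))=2$, which forces $\lambda(M(P_n))=2n+\max\{2,\lceil i_4(\overline{P_n})/2\rceil\}-2$ by the remark following Theorem~\ref{th3.4}). So everything reduces to computing $i_4(\overline{P_n})$, the number of vertices left unmatched by a maximum $4$-star matching of the complement. The complement $\overline{P_n}$ is dense for small $n$, so I would directly compute a maximum $4$-star matching: for $n=3$ and $n=4$ I expect $\overline{P_n}$ to admit a near-perfect $4$-star matching leaving at most the right number of vertices so that $2n+\lceil i_4/2\rceil-2=6$, and for $n=5$ the value $7$ should correspond to $\lceil i_4(\overline{P_5})/2\rceil=3$, i.e. $i_4(\overline{P_5})=5$ or $6$.

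Concretely, the cleanest route for $n\in\{3,4,5\}$ may be to bypass the matching computation and argue directly. For the lower bounds I would use that $M(P_n)$ contains a subgraph whose Mycielski has large $\lambda$-number (via Observation~\ref{lem2.2} and Lemma~\ref{lem2.7}), together with the diameter-$2$ constraint $\lambda\geq |M(P_n)|-1=2n$; the extra $+1$ or $+2$ over $2n$ then comes from showing no $2n$- or $(2n+1)$-labeling exists, which for these tiny fixed graphs is a finite check on the label-frequency structure forced by the distance-$2$ completeness. For the matching upper bounds I would simply exhibit an explicit $L(2,1)$-labeling of $M(P_3),M(P_4),M(P_5)$ achieving spans $6,6,7$ respectively.

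The main obstacle I anticipate is the case $n=5$, where the answer $7$ exceeds the naive diameter bound $2n=10$... — here one must pin down $i_4(\overline{P_5})$ exactly and rule out a labeling of span $2n=10$, equivalently show $\overline{P_5}$ has no $4$-star matching leaving fewer than $5$ vertices unmatched. Verifying that this fails requires inspecting the structure of $\overline{P_5}$ (which is itself a short path plus extra edges) rather than appealing to a general degree inequality such as Corollary~\ref{cor3.3}, so the crux is the explicit combinatorial argument that $\lceil i_4(\overline{P_5})/2\rceil=3$ and not $2$.
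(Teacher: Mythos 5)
There are two genuine gaps. First, your treatment of the small cases rests on the claim that $diam(M(P_n))=2$ for $n\le 5$, which is false: by Lemma~\ref{lem2.5}, $diam(M(P_4))=3$ and $diam(M(P_5))=4$. Had the diameter been $2$ you would get the lower bound $\lambda(M(P_n))\ge |M(P_n)|-1=2n$, i.e.\ $\lambda(M(P_4))\ge 8$ and $\lambda(M(P_5))\ge 10$, contradicting the values $6$ and $7$ you are trying to prove; this alone shows the premise cannot hold. Worse, the $4$-star-matching machinery is the wrong tool here: $\overline{P_4}$ and $\overline{P_5}$ both have Hamiltonian paths, so $i_4(\overline{P_4})=i_4(\overline{P_5})=0$ and Theorem~\ref{th3.4} yields only $\lambda(M(P_n))\le 2n$, which distinguishes nothing in the range below $2n$ where the true answers ($6<8$ and $7<10$) live. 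In particular your proposed crux for $n=5$ --- showing $\lceil i_4(\overline{P_5})/2\rceil=3$ --- is unachievable since that quantity is $0$. The real difficulty, proving $\lambda(M(P_5))\ge 7$, requires a direct structural argument: the paper shows that in any hypothetical $6$-labeling the root must get $0$ or $6$, the five copies $v'_i$ then exhaust $\{2,\dots,6\}$, the vertex $v_3$ is forced to label $1$, and the resulting forced coincidences $f(v_2)=f(v'_5)$, $f(v_4)=f(v'_1)$, etc.\ lead to a parity contradiction among $\{3,4,5,6\}$. Only the case $n=3$ genuinely follows from the diameter-$2$ argument.

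Second, your route for $n\ge 6$ via Lemma~\ref{lemm3.1} fails for $n=6$ and $n=7$. A clique of $\overline{P_n^2}$ is a set of indices pairwise differing by at least $3$, so $\overline{P_6^2}$ contains no clique of order $3$ at all, and $\overline{P_7^2}$ contains exactly one, namely $\{v_1,v_4,v_7\}$; in either case one cannot partition $V(P_n)$ into cliques with at most one of order less than $3$, so the hypothesis of Lemma~\ref{lemm3.1} is unsatisfiable. (Your suggested blocks $\{v_1,v_4,v_7\},\dots$ do work once $n\ge 8$, essentially the partition the paper uses for cycles.) The paper instead constructs an explicit $7$-span labeling of $M(P_6)$ and extends it to all $n\ge 6$ by giving the copies $v'_i$ consecutive labels from $8$ onward and cycling the labels $6,2,4$ on the $v_i$ for $i\ge 7$. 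So your outline needs explicit labelings for $n\in\{4,5,6,7\}$ and a bespoke impossibility argument for $n=5$; as written, the two tools you lean on (diameter-$2$ plus Theorem~\ref{th3.4}, and Lemma~\ref{lemm3.1}) do not cover these cases.
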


\begin{proof} 
	\leavevmode
	\begin{itemize}
		\item  For $n=3$, we have $diam(P_3)=2$.   So from Theorem~\ref{th3.4} $\lambda(M(P_3))=6$.
		\item   For $n$=4, we have a $6$-$L(2,1)$-labeling of $M(P_4)$ shown in Figure~\ref{fig1}. Hence $\lambda(M(P_4)) \leq~6$.
		Also we have  $M(P_3)$ is a subgraph of $ M(P_4)$. By Lemma~\ref{lem2.7}, it follows that $\lambda(M(P_4)) \geq \lambda(M(P_3))=6$. thus,  $\lambda(M(P_4))=6$.	
		\item For $n=5$, Figure~\ref{fig2} illustrates a $7$-$L(2,1)$-labeling of $M(P_5)$. This implies also by Theorem~\ref{th3.1}, that $ 6 \leq \lambda(M(P_5)) \leq 7$.
		\begin{figure}[h]
			\centering
			\begin{minipage}  {.5\textwidth} 
				\centering
				\begin{tikzpicture} [scale=0.65]
				\clip(-4.86,-2.4) rectangle (0.9,0.52);
				\draw [line width=0.7pt] (-2,0)-- (-3,-1);
				\draw [line width=0.7pt] (-2,0)-- (-4.48,-1);
				\draw [line width=0.7pt] (-2,0)-- (-1,-1);
				\draw [line width=0.7pt] (-2,0)-- (0.5,-1);
				\draw [line width=0.7pt] (0.5,-1)-- (-1,-2.21);
				\draw [line width=0.7pt] (-1,-1)-- (0.5,-2.21);
				\draw [line width=0.7pt] (-1,-1)-- (-3,-2.21);
				\draw [line width=0.7pt] (-1,-2.21)-- (-3,-1);
				\draw [line width=0.7pt] (-3,-2.21)-- (-1,-2.21);
				\draw [line width=0.7pt] (-4.48,-2.21)-- (-3,-2.21);
				\draw [line width=0.7pt] (-3,-1)-- (-4.48,-2.21);
				\draw [line width=0.7pt] (-4.48,-1)-- (-3,-2.21);
				\draw [line width=0.7pt] (-1,-2.21)-- (0.5,-2.21);
				\begin{scriptsize}
				\draw [fill=black] (-2,0) circle (3pt);
				\draw[color=black] (-2,0.26) node {0};
				\draw [fill=black] (-3,-1) circle (3pt);
				\draw[color=black] (-3,-0.74) node {2};
				\draw [fill=black] (-1,-1) circle (3pt);
				\draw[color=black] (-1,-0.74) node {5};
				\draw [fill=black] (0.5,-1) circle (3pt);
				\draw[color=black] (0.5,-0.74) node {4};
				\draw [fill=black] (-4.48,-1) circle (3pt);
				\draw[color=black] (-4.48,-0.74) node {3};
				\draw [fill=black] (-4.48,-2.21) circle (3pt);
				\draw[color=black] (-4.48,-1.89) node {4};
				\draw [fill=black] (-3,-2.21) circle (3pt);
				\draw[color=black] (-3,-1.89) node {1};
				\draw [fill=black] (-1,-2.21) circle (3pt);
				\draw[color=black] (-1,-1.89) node {6};
				\draw [fill=black] (0.5,-2.21) circle (3pt);
				\draw[color=black] (0.5,-1.89) node {3};
				\end{scriptsize}
				\end{tikzpicture}
				\caption{\label{fig1}A $6$-$L(2,1)$-labeling of $M(P_4)$  }
			\end{minipage}%
			\begin{minipage} {.5\textwidth} 
				\centering
				\begin{tikzpicture}[scale=0.7] 
				\clip(-5.86,-2.32) rectangle (2,0.39);
				\draw [line width=0.7pt] (-2,0)-- (-3.4,-1);
				\draw [line width=0.7pt] (-2,0)-- (-5,-1);
				\draw [line width=0.7pt] (-2,0)-- (-0.6,-1);
				\draw [line width=0.7pt] (-2,0)-- (1,-1);
				\draw [line width=0.7pt] (1,-1)-- (-0.6,-2.2);
				\draw [line width=0.7pt] (-0.6,-1)-- (1,-2.2);
				\draw [line width=0.7pt] (-3.4,-2.2)-- (-0.6,-2.2);
				\draw [line width=0.7pt] (-5,-2.2)-- (-3.4,-2.2);
				\draw [line width=0.7pt] (-3.4,-1)-- (-5,-2.2);
				\draw [line width=0.7pt] (-5,-1)-- (-3.4,-2.2);
				\draw [line width=0.7pt] (-0.6,-2.2)-- (1,-2.2);
				\draw [line width=0.7pt] (-2,0)-- (-2,-1);
				\draw [line width=0.7pt] (-2,-1)-- (-0.6,-2.2);
				\draw [line width=0.7pt] (-2,-1)-- (-3.4,-2.2);
				\draw [line width=0.7pt] (-3.4,-1)-- (-2.0294336585856554,-2.2);
				\draw [line width=1.1pt] (-0.6,-1)-- (-2.0294336585856554,-2.2);
				\begin{scriptsize}
				\draw [fill=black] (-2,0) circle (3pt);
				\draw[color=black] (-2,0.26) node {0};
				\draw [fill=black] (-3.4,-1) circle (3pt);
				\draw[color=black] (-3.4,-0.74) node {2};
				\draw [fill=black] (-0.6,-1) circle (3pt);
				\draw[color=black] (-0.6,-0.74) node {4};
				\draw [fill=black] (1,-1) circle (3pt);
				\draw[color=black] (1,-0.74) node {6};
				\draw [fill=black] (-5,-1) circle (3pt);
				\draw[color=black] (-5,-0.74) node {3};
				\draw [fill=black] (-5,-2.2) circle (3pt);
				\draw[color=black] (-5,-1.94) node {6};
				\draw [fill=black] (-3.4,-2.2) circle (3pt);
				\draw[color=black] (-3.4,-1.94) node {1};
				\draw [fill=black] (-0.6,-2.2) circle (3pt);
				\draw[color=black] (-0.6,-1.94) node {3};
				\draw [fill=black] (1,-2.2) circle (3pt);
				\draw[color=black] (1,-1.94) node {1};
				\draw [fill=black] (-2.0294336585856554,-2.2) circle (3pt);
				\draw[color=black] (-2.0294336585856554,-1.94) node {7};
				\draw [fill=black] (-2,-1) circle (3pt);
				\draw[color=black] (-1.8,-0.8) node {5};
				\end{scriptsize}
				\end{tikzpicture}
				\caption{\label{fig2}A $7$-$L(2,1)$-labeling of $M(P_5)$  }
			\end{minipage}
		\end{figure}
		\par Suppose that $\lambda(M(P_5))=6$. Then there is   an $L(2,1)$-labeling $f$ of $M(P_5)$ using labels in the set $L=\{0,1,2,3,4,5,6\}$. Since $deg_{M}(u)=5$, by Lemma~\ref{lem2.6}, $f(u)=0$ or $6$, without loss of generality, we suppose that $f(u)=0$.  We denote $N(v)$ the open neighborhood of a vertex $v$, and $N^2(v)$ the set of all vertices at distance at most $2$ from a vertex $v$ in $M(P_5)$. We have $N(u)=\{v'_1,v'_2,v'_3,v'_4,v'_5\}$, and $d_{M}(v'_i,v'_j)=2$, for $ 1\leq i,j\leq 5$. So each vertex $v'_i$ receives a distinct label from the set $\{2,3,4,5,6\}$.	We have $N^2(v_3)=\{u,v'_1,v'_2,v'_3,v'_4,v'_5\}$, and each vertex in $N^2(v_3)$ having a distinct label in $\{0,2,3,4,5,6\}$, which leaves only the label 1 from $L$ available for $v_3$. Then $f(v_3)=1$. We have
		$N^2(v_2)=\{u,v_3,v'_1,v'_2,v'_3,v'_4\}$, each vertex in $N^2(v_2)$ having a distinct label from $L$. So $f(v_2)$=$f(v'_5)$.
		Also $N^2(v_4)=\{u,v_3,v'_2,v'_3,v'_4,v'_5\}$, so $f(v_4)=f(v'_1)$. 
		$N^2(v_1)$=$\{u,v_2,v_3,v'_1,v'_2,v'_3\}$ and each vertex in $N^2(v_1)$ having a distinct label from $L$ ($f(v_2)$=$f(v'_5)$). Then $f(v_1)=f(v'_4)$. Also  $N^2(v_5)=\{u,v_3,v_4,v'_3,v'_4,v'_5\}$, with $f(v_4)=f(v'_1)$, so $f(v_5)=f(v'_2)$.
		We have $N(v_3)=\{v_2,v_4,v'_2,v'_4\}$, with  $f(v_2)=f(v'_5)$,  $f(v_4)=f(v'_1)$, and $f(v_3)=1$. It follows that the labels assigned to   $v'_1,v'_2,v'_4$ and $v'_5$ must be greater or equal to 3. Hence $f(v'_3)=2$.
		We have $v_2$ and $v_4$ are adjacent to $v'_3$, $f(v_2)=f(v'_5)$,  $f(v_4)=f(v'_1)$, and $f(v'_3)=2$, so $f(v'_5),f(v'_1)\in \{4,5,6\}$. Since $v'_1$ is adjacent to $v_2$ and $f(v_2)=f(v'_5)$, we have $|f(v'_5)-f(v'_1)|\geq 2$. Therefore $f(v'_1),f(v'_5)\in \{4,6\}$, and $f(v'_2),f(v'_4)\in \{3,5\}$. Since $f(v_2)=f(v'_5)$, $f(v_1)=f(v'_4)$,  $f(v_4)=f(v'_1)$ and $f(v_5)=f(v'_2)$. It follows that $|f(v'_5)-f(v'_4)|\geq 2$, and $|f(v'_1)-f(v'_2)|\geq 2$, impossible.	Therefore $\lambda(M(P_5)) \geq 7$. Hence $\lambda(M(P_5))= 7$.
		\item For $n\geq 6$, we define a labeling $f$ on $V(M(P_n))$ as following.
		\par $f(u)=0$, $f(v'_1)=6$, $f(v'_2)=5$, $f(v'_3)=4$, $f(v'_4)=7$, $f(v'_5)=2$, $f(v'_6)=3$, and  $f(v'_i)=i+1$ if $i\geq 7$. 
		\par $f(v_1)=7$, $f(v_2)=1$, $f(v_3)=3$, $f(v_4)=6$, $f(v_5)=1$, $f(v_6)=4$, and for $i\geq 7$: 
		$f(v_i)=6$ if $i\equiv 1\,(\text{mod } 3)$, $f(v_i)=2$ if $i\equiv 2\,(\text{mod } 3)$, $f(v_i)=4$ if $i\equiv 0\,(\text{mod } 3)$.\par	
		The idea is to come up with a 7-$L(2,1)$-labeling of the subgraph induced by $H=\{u,v_i,v'_i : 1\leq i\leq 6 \}$  isomorphic to $M(P_6)$. Then if $i\geq 7$, assign each vertex copy $v'_i$  consecutive labels beginning with $8$, and label the vertices $v_i$ with labels $6,2,4$  for $i\equiv 1\,(\text{mod } 3)$, $i\equiv 2\, (\text{mod } 3)$, and $i\equiv 0\, (\text{mod } 3)$, respectively. This is an $L(2,1)$-labeling of $M(P_n)$ with span $n+1$. 
		Hence $\lambda(M(P_n))\leq n+1$, for $n\geq 6$. It follows from Theorem~\ref{th3.1}, that $\lambda(M(P_n))= n+1$ , for $n\geq 6$.
	\end{itemize}
	\vspace{-4\topsep}
	\end{proof}
Let $C_n$ be the graph cycle, with vertex set $V(C_n)=\{v_0,v_1,\ldots,v_{n-1}\}$ and edge set $E(C_n)=\{v_iv_{i+1(\text{mod }n)} : 0\leq i \leq n-1\}$, where the indices are taken modulo $n$. We denote $V(M( C_n))= V(C_n)\cup  \{v'_i : 1\leq i \leq n\} \cup \{u\}$, we have $E(M(C_n))=\{v_iv_{i+1(\text{mod }n)}, v_iv'_{i+1(\text{mod }n)},v'_iv_{i+1(\text{mod }n)} : 0\leq i \leq n-1 \} \cup \{v'_iu : 0\leq i \leq n-1\}$.
\begin{prop}\label{thm3.7}
	$$ \lambda(M(C_n))=\begin{cases}
	6 \hspace{35pt}if\,\, n=3, \\
	8 \hspace{35pt}if\,\, n=4, \\
	10 \hspace{30pt}if\,\, n=5, \\
	n+1 \hspace{15pt}if\,\,n \geq 6.	
	\end{cases}  $$		
\end{prop}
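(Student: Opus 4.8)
The plan is to split into the small cases $n\in\{3,4,5\}$, where $M(C_n)$ has diameter $2$, and the generic case $n\ge 6$. For $n\in\{3,4,5\}$ I note $\mathrm{diam}(C_n)\le 2$, so by Lemma~\ref{lem2.5} $\mathrm{diam}(M(C_n))=2$ and every pair of vertices must receive distinct labels, giving $\lambda(M(C_n))\ge |M(C_n)|-1=2n$. For the matching upper bound, $\overline{C_3}=\overline{K_3}$, $\overline{C_4}=2K_2$ and $\overline{C_5}=C_5$ each satisfy $i_4(\overline{C_n})\le 4$ (here $i_4(\overline{C_3})=3$, while $\overline{C_4}$ and $\overline{C_5}$ admit perfect $4$-star matchings, so $i_4=0$), whence Theorem~\ref{th3.4}$(a)$ yields $\lambda(M(C_n))\le 2n$. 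Combining the two bounds gives $\lambda(M(C_n))=2n$, i.e. $6,8,10$ for $n=3,4,5$.

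For $n\ge 6$, Theorem~\ref{th3.1} gives the lower bound $\lambda(M(C_n))\ge\max(n+1,2(\bigtriangleup+1))=\max(n+1,6)=n+1$ since $C_n$ is $2$-regular, and the whole difficulty is to produce an $L(2,1)$-labeling of span $n+1$. I would obtain this from Lemma~\ref{lemm3.1}, which reduces the task to partitioning $V(C_n)$ into cliques of $\overline{C_n^2}$ — that is, into sets of vertices that are pairwise at cyclic distance $\ge 3$ in $C_n$ — at least $k-1$ of the $k$ parts having size $\ge 3$. Arranging the vertices cyclically, a set of size $\ge 3$ is such a clique precisely when all of its consecutive cyclic gaps are $\ge 3$, which then forces every pairwise distance $\ge 3$ automatically. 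When $3\mid n$ and $n\ge 9$, the three residue classes modulo $3$ are cliques of size $n/3\ge 3$, so Lemma~\ref{lemm3.1} applies at once and gives $\lambda(M(C_n))=n+1$.

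The remaining work is the realizability of such a partition when $n\not\equiv 0\pmod 3$, where the cyclic wrap-around spoils the clean residue decomposition that was available for the path in Proposition~\ref{th3.6}. For $n=3m+1$ with $m\ge 3$ I would use the three spaced cliques $\{v_0,v_3,v_6,\dots\}$, $\{v_1,v_4,\dots\}$, $\{v_2,v_5,\dots\}$ truncated so that each wrap-around gap remains $\ge 3$, and place the single leftover vertex in a singleton, giving $k=4$ with three parts of size $\ge 3$. For $n=3m+2$ with $m\ge 3$ one must be more careful and partition into $m$ triples together with one pair at distance $\ge 3$ (so $k=m+1$ with exactly $m$ parts of size $\ge 3$); such a partition exists but has to be exhibited — for instance, in $C_{11}$ the pair $\{v_0,v_4\}$ together with $\{v_1,v_5,v_8\}$, $\{v_2,v_6,v_9\}$, $\{v_3,v_7,v_{10}\}$. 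The cases $n\in\{6,7,8\}$ are genuinely exceptional, because the maximum clique of $\overline{C_n^2}$ has size $\lfloor n/3\rfloor\le 2$, so no part of size $\ge 3$ exists and Lemma~\ref{lemm3.1} cannot be invoked; these I would settle with explicit $(n+1)$-labelings whose validity is checked by translating cycle distances into $M(C_n)$-distances via Lemma~\ref{lem2.4}.

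The main obstacle is exactly this upper-bound construction for $n\ge 6$: giving a uniform clique partition of $\overline{C_n^2}$ across the three residues of $n$ modulo $3$ while respecting the cyclic gaps, and disposing of the genuinely exceptional small cycles $n=6,7,8$ by hand. The accompanying distance bookkeeping — verifying the distance-$1$ and distance-$2$ separation conditions simultaneously for the three vertex types $v_i$, $v_i'$ and the root $u$ — is routine but error-prone, and is where I would expect to spend most of the effort.
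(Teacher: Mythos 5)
Your proposal is correct and follows essentially the same route as the paper: diameter-two plus the $4$-star-matching criterion (Theorem~\ref{th3.4}) for $n=3,4,5$, explicit $(n+1)$-labelings for the exceptional cycles $n=6,7,8$ (which the paper supplies as Figures~\ref{fig3}--\ref{fig5}), and Lemma~\ref{lemm3.1} applied to a mod-$3$ clique partition of $\overline{C_n^2}$ for $n\geq 9$ --- indeed your $C_{11}$ partition coincides with the paper's general formula $S_i=\{v_i,v_{i+\lceil n/3\rceil},v_{i+2\lceil n/3\rceil-1}\}$ together with the pair $\{v_0,v_{\lceil n/3\rceil}\}$. The only piece left unwritten is the trio of explicit labelings for $n\in\{6,7,8\}$, which you correctly identify as unavoidable since $\overline{C_n^2}$ has no clique of order $3$ there.
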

\begin{proof}
	\leavevmode
	\begin{itemize}
		\item For $3 \leq n \leq 5 $, since $diam(C_3)=1$,  $diam(C_4)=diam(C_5)=2$,  from Lemma~\ref{lem2.4}, $diam(M(C_3))=diam(M(C_4))=diam(M(C_5))=2$. By applying Theorem~\ref{th3.4}, we get that  $\lambda(M(C_3))=6$, $\lambda(M(C_4))=8$, and $ \lambda(M(C_5))=10$.		
		\item For $n\geq 6$, we have Figure~\ref{fig3}, Figure~\ref{fig4}, and Figure~\ref{fig5}, respectively present an $L(2,1)$-Labeling for $M(C_6)$, $M(C_7)$, and $M(C_8)$, respectively with span $7$, $8$, and $9$. It follows from the lower bound in Theorem~\ref{th3.1} that $ \lambda(M(C_6))=7$, $ \lambda(M(C_7))=8$, and $ \lambda(M(C_8))=9$.
		\begin{figure}[h]
			\centering
			\begin{tikzpicture}[scale=0.7]
			\clip(-9.46,-2.42) rectangle (9.9,2.37);
			\draw [line width=0.7pt] (0,0)-- (-0.8660254037844385,0.5);
			\draw [line width=0.6pt] (0,0)-- (0.8660254037844387,0.5);
			\draw [line width=0.6pt] (0,0)-- (0,1);
			\draw [line width=0.6pt] (0,0)-- (-0.8660254037844385,-0.5);
			\draw [line width=0.6pt] (0,0)-- (0.8660254037844387,-0.5);
			\draw [line width=0.6pt] (0,0)-- (0,-1);
			\draw [line width=0.6pt] (0.8660254037844387,-0.5)-- (0,-2);
			\draw [line width=0.6pt] (0.8660254037844387,-0.5)-- (1.7320508075688774,1);
			\draw [line width=0.6pt] (0.8660254037844387,0.5)-- (1.7320508075688774,-1);
			\draw [line width=0.6pt] (0.8660254037844387,0.5)-- (0,2);
			\draw [line width=0.6pt] (0,1)-- (1.7320508075688774,1);
			\draw [line width=0.6pt] (0,1)-- (-1.732050807568877,1);
			\draw [line width=0.6pt] (-0.8660254037844385,0.5)-- (0,2);
			\draw [line width=0.6pt] (-0.8660254037844385,0.5)-- (-1.732050807568877,-1);
			\draw [line width=0.6pt] (-0.8660254037844385,-0.5)-- (0,-2);
			\draw [line width=0.6pt] (-0.8660254037844385,-0.5)-- (-1.732050807568877,1);
			\draw [line width=0.6pt] (0,-1)-- (1.7320508075688774,-1);
			\draw [line width=0.6pt] (0,-1)-- (-1.732050807568877,-1);
			\draw [line width=0.6pt] (-1.732050807568877,-1)-- (0,-2);
			\draw [line width=0.6pt] (0,-2)-- (1.7320508075688774,-1);
			\draw [line width=0.6pt] (1.7320508075688774,-1)-- (1.7320508075688774,1);
			\draw [line width=0.6pt] (1.7320508075688774,1)-- (0,2);
			\draw [line width=0.6pt] (0,2)-- (-1.732050807568877,1);
			\draw [line width=0.6pt] (-1.732050807568877,1)-- (-1.732050807568877,-1);
			\begin{scriptsize}
			\draw [fill=black] (0,0) circle (3pt);
			\draw[color=black] (0.16,0.3) node {0};
			\draw [fill=black] (0,1) circle (3pt);
			\draw[color=black] (0,1.23) node {2};
			\draw [fill=black] (0.8660254037844387,0.5) circle (3pt);
			\draw[color=black] (1.02,0.65) node {7};
			\draw [fill=black] (-0.8660254037844385,0.5) circle (3pt);
			\draw[color=black] (-1.05,0.55) node {3};
			\draw [fill=black] (-0.8660254037844385,-0.5) circle (3pt);
			\draw[color=black] (-1.05,-0.6) node {6};
			\draw [fill=black] (0,-1) circle (2.5pt);
			\draw[color=black] (0,-1.25) node {5};
			\draw [fill=black] (0.8660254037844387,-0.5) circle (3pt);
			\draw[color=black] (1.05,-0.6) node {4};
			\draw [fill=black] (1.7320508075688774,1) circle (3pt);
			\draw[color=black] (1.85,1.2) node {6};
			\draw [fill=black] (0,2) circle (2.5pt);
			\draw[color=black] (0.05,2.23) node {$1$};
			\draw [fill=black] (-1.732050807568877,1) circle (3pt);
			\draw[color=black] (-1.9,1.2) node {$4$};
			\draw [fill=black] (-1.732050807568877,-1) circle (3pt);
			\draw[color=black] (-2.02,-1.02) node {$7$};
			\draw [fill=black] (0,-2) circle (2.5pt);
			\draw[color=black] (-0.02,-2.3) node {$1$};
			\draw [fill=black] (1.7320508075688774,-1) circle (3pt);
			\draw[color=black] (2,-1) node {$3$};
			\end{scriptsize}
			\end{tikzpicture}
			\caption{\label{fig3}A $7$-$L(2,1)$-labeling of $M(C_6)$  }
		\end{figure}
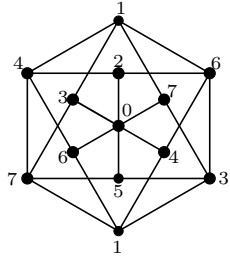
		\begin{figure}[h]
			\centering
			\begin{minipage}{.5\textwidth}
				\centering
				\begin{tikzpicture}[scale=0.55,rotate=-12]
				\clip(-3.2,-3.3) rectangle (3.5,3.4);
				
				\draw [line width=0.6pt] (1.8704694055762008,2.3454944474040893)-- (-0.667562801868943,2.924783736545471);
				\draw [line width=0.6pt] (-0.667562801868943,2.924783736545471)-- (-2.702906603707257,1.3016512173526746);
				\draw [line width=0.6pt] (-2.702906603707257,1.3016512173526746)-- (-2.7029066037072575,-1.301651217352674);
				\draw [line width=0.6pt] (-2.7029066037072575,-1.301651217352674)-- (-0.6675628018689438,-2.924783736545471);
				\draw [line width=0.6pt] (-0.6675628018689438,-2.924783736545471)-- (1.8704694055762001,-2.3454944474040897);
				\draw [line width=0.6pt] (1.8704694055762001,-2.3454944474040897)-- (3,0);
				\draw [line width=0.6pt] (3,0)-- (1.8704694055762008,2.3454944474040893);
				\draw [line width=0.6pt] (0.6234898018587336,0.7818314824680298)-- (0.6234898018587336,0.7818314824680298);
				\draw [line width=0.6pt] (-0.22252093395631434,0.9749279121818236)-- (-0.22252093395631434,0.9749279121818236);
				\draw [line width=0.6pt] (-0.900968867902419,0.43388373911755823)-- (-0.900968867902419,0.43388373911755823);
				\draw [line width=0.6pt] (-0.9009688679024191,-0.433883739117558)-- (-0.9009688679024191,-0.433883739117558);
				\draw [line width=0.6pt] (-0.2225209339563146,-0.9749279121818236)-- (-0.2225209339563146,-0.9749279121818236);
				\draw [line width=0.6pt] (0.6234898018587334,-0.7818314824680299)-- (0.6234898018587334,-0.7818314824680299);
				\draw [line width=0.6pt] (1,0)-- (1,0);
				\draw [line width=0.6pt] (0,0)-- (-0.22252093395631434,0.9749279121818236);
				\draw [line width=0.6pt] (0,0)-- (-0.900968867902419,0.43388373911755823);
				\draw [line width=0.6pt] (0,0)-- (-0.9009688679024191,-0.433883739117558);
				\draw [line width=0.6pt] (0,0)-- (-0.2225209339563146,-0.9749279121818236);
				\draw [line width=0.6pt] (0,0)-- (0.6234898018587334,-0.7818314824680299);
				\draw [line width=0.6pt] (0,0)-- (0.6234898018587336,0.7818314824680298);
				\draw [line width=0.6pt] (0,0)-- (0.6234898018587336,0.7818314824680298);
				\draw [line width=0.6pt] (0.6234898018587336,0.7818314824680298)-- (3,0);
				\draw [line width=0.6pt] (-0.6726224289868756,2.920748818561252)-- (0.6234898018587336,0.7818314824680298);
				\draw [line width=0.6pt] (-2.702906603707257,1.29)-- (-0.22252093395631434,0.9749279121818236);
				\draw [line width=0.6pt] (-0.6726224289868756,2.920748818561252)-- (-0.900968867902419,0.43388373911755823);
				\draw [line width=0.6pt] (-2.7029066037072575,-1.301651217352674)-- (-0.900968867902419,0.43388373911755823);
				\draw [line width=0.6pt] (-2.7029066037072575,-1.301651217352674)-- (-0.2225209339563146,-0.9749279121818236);
				\draw [line width=0.6pt] (-0.2225209339563146,-0.9749279121818236)-- (1.8704694055762001,-2.3454944474040897);
				\draw [line width=0.6pt] (-0.6897994987619294,-2.9070505625166714)-- (0.6234898018587334,-0.7818314824680299);
				\draw [line width=0.6pt] (0.6234898018587334,-0.7818314824680299)-- (3,0);
				\draw [line width=0.6pt] (1.8745555099842233,2.337009564172202)-- (-0.22252093395631434,0.9749279121818236);
				\draw [line width=0.6pt] (0,0)-- (1,0);
				\draw [line width=0.6pt] (1,0)-- (1.8704694055762001,-2.3454944474040897);
				\draw [line width=0.6pt] (1,0)-- (1.8745555099842233,2.337009564172202);
				\draw [line width=0.6pt] (-0.9009688679024191,-0.433883739117558)-- (-0.6897994987619294,-2.9070505625166714);
				\draw [line width=0.6pt] (-0.9009688679024191,-0.433883739117558)-- (-2.702906603707257,1.29);
				\begin{scriptsize}
				
				\draw [fill=black] (0,0) circle (3.5pt);
				\draw[color=black] (0.06,0.38) node {0};
				\draw [fill=black] (-0.22252093395631434,0.9749279121818236) circle (3.5pt);
				\draw[color=black] (-0.22,1.25) node {2};
				\draw [fill=black] (-0.900968867902419,0.43388373911755823) circle (3.5pt);
				\draw[color=black] (-1.1,0.55) node {5};
				\draw [fill=black] (0.6234898018587336,0.7818314824680298) circle (3.5pt);
				\draw [fill=black] (-0.9009688679024191,-0.433883739117558) circle (3.5pt);
				\draw[color=black] (-1.15,-0.5) node {4};
				\draw [fill=black] (-0.2225209339563146,-0.9749279121818236) circle (3pt);
				\draw[color=black] (-0.26,-1.25) node {7};
				\draw [fill=black] (0.6234898018587334,-0.7818314824680299) circle (3.5pt);
				\draw[color=black] (0.8,-1) node {8};
				\draw [fill=black] (0.6234898018587336,0.7818314824680298) circle (3.5pt);
				\draw [fill=black] (3,0) circle (3.5pt);
				\draw[color=black] (3.25,0.1) node {4};
				\draw [fill=black] (-0.6726224289868756,2.920748818561252) circle (3.5pt);
				\draw[color=black] (-0.72,3.21) node {1};
				\draw [fill=black] (-2.702906603707257,1.29) circle (3.5pt);
				\draw[color=black] (-3,1.4) node {8};
				\draw [fill=black] (-2.7029066037072575,-1.301651217352674) circle (3.5pt);
				\draw[color=black] (-2.94,-1.5) node {3};
				\draw [fill=black] (1.8704694055762001,-2.3454944474040897) circle (3.5pt);
				\draw[color=black] (2.12,-2.4) node {1};
				\draw [fill=black] (-0.6897994987619294,-2.9070505625166714) circle (3.5pt);
				\draw[color=black] (-0.92,-3.1) node {6};
				\draw [fill=black] (1.8745555099842233,2.337009564172202) circle (3.5pt);
				\draw[color=black] (2.04,2.6) node {7};
				\draw [fill=black] (0.6234898018587336,0.7818314824680298) circle (3.5pt);
				\draw [fill=black] (0.6234898018587336,0.7818314824680298) circle (3.5pt);
				\draw [fill=black] (0.6234898018587336,0.7818314824680298) circle (3.5pt);
				\draw [fill=black] (0.6234898018587336,0.7818314824680298) circle (3.5pt);
				\draw [fill=black] (0.6234898018587336,0.7818314824680298) circle (3.5pt);
				\draw [fill=black] (0.6234898018587336,0.7818314824680298) circle (3.5pt);
				\draw[color=black] (0.76,1.02) node {6};
				\draw [fill=black] (1,0) circle (3.5pt);
				\draw[color=black] (1.25,0.1) node {3};
				\end{scriptsize}
				\end{tikzpicture}
				\caption{\label{fig4}A $8$-$L(2,1)$-labeling of $M(C_7)$  }
			\end{minipage}%
			\begin{minipage}{.5\textwidth}
				\centering
				\begin{tikzpicture}[scale=0.55]
				\clip(-3.5,-4) rectangle (3.5,4);

				\draw [line width=0.6pt] (2.121320343559643,2.1213203435596424)-- (0,3);
				\draw [line width=0.6pt] (0,3)-- (-2.1213203435596424,2.121320343559643);
				\draw [line width=0.6pt] (-2.1213203435596424,2.121320343559643)-- (-3,0);
				\draw [line width=0.6pt] (-3,0)-- (-2.121320343559643,-2.1213203435596424);
				\draw [line width=0.6pt] (-2.121320343559643,-2.1213203435596424)-- (0,-3);
				\draw [line width=0.6pt] (0,-3)-- (2.121320343559642,-2.121320343559643);
				\draw [line width=0.6pt] (2.121320343559642,-2.121320343559643)-- (3,0);
				\draw [line width=0.6pt] (3,0)-- (2.121320343559643,2.1213203435596424);
				\draw [line width=0.6pt] (0.7071067811865476,0.7071067811865475)-- (0.7071067811865476,0.7071067811865475);
				\draw [line width=0.6pt] (0,1)-- (0,1);
				\draw [line width=0.6pt] (-0.7071067811865475,0.7071067811865476)-- (-0.7071067811865475,0.7071067811865476);
				\draw [line width=0.6pt] (-1,0)-- (-1,0);
				\draw [line width=0.6pt] (-0.7071067811865477,-0.7071067811865475)-- (-0.7071067811865477,-0.7071067811865475);
				\draw [line width=0.6pt] (0,-1)-- (0,-1);
				\draw [line width=0.6pt] (0.7071067811865474,-0.7071067811865477)-- (0.7071067811865474,-0.7071067811865477);
				\draw [line width=0.6pt] (1,0)-- (1,0);
				\draw [line width=0.6pt] (0,0)-- (0,1);
				\draw [line width=0.6pt] (0,0)-- (-0.7071067811865475,0.7071067811865476);
				\draw [line width=0.6pt] (0,0)-- (-1,0);
				\draw [line width=0.6pt] (0,0)-- (-0.7071067811865477,-0.7071067811865475);
				\draw [line width=0.6pt] (0,0)-- (0,-1);
				\draw [line width=0.6pt] (0,0)-- (0.7071067811865476,0.7071067811865475);
				\draw [line width=0.6pt] (0,0)-- (0.7071067811865476,0.7071067811865475);
				\draw [line width=0.6pt] (-0.00527335476512715,2.9978157049370795)-- (0.7071067811865476,0.7071067811865475);
				\draw [line width=0.6pt] (-0.00527335476512715,2.9978157049370795)-- (-0.7071067811865475,0.7071067811865476);
				\draw [line width=0.6pt] (-3,0)-- (-0.7071067811865475,0.7071067811865476);
				\draw [line width=0.6pt] (-3,0)-- (-0.7071067811865477,-0.7071067811865475);
				\draw [line width=0.6pt] (-0.7071067811865477,-0.7071067811865475)-- (0,-3);
				\draw [line width=0.6pt] (-2.130920163093153,-2.0981443290455077)-- (0,-1);
				\draw [line width=0.6pt] (0,-1)-- (2.121320343559642,-2.121320343559643);
				\draw [line width=0.6pt] (0,0)-- (0.7071067811865474,-0.7071067811865477);
				\draw [line width=0.6pt] (0.7071067811865474,-0.7071067811865477)-- (0,-3);
				\draw [line width=0.6pt] (0.7071067811865474,-0.7071067811865477)-- (2.996821354963606,-0.007673927956833637);
				\draw [line width=0.6pt] (-1,0)-- (-2.130920163093153,-2.0981443290455077);
				\draw [line width=0.6pt] (-1,0)-- (-2.1252529207063,2.1118262624771043);
				\draw [line width=0.6pt] (-2.1252529207063,2.1118262624771043)-- (0,1);
				\draw [line width=0.6pt] (2.121320343559643,2.1213203435596424)-- (0,1);
				\draw [line width=0.6pt] (0.7071067811865476,0.7071067811865475)-- (2.996821354963606,-0.007673927956833637);
				\draw [line width=0.6pt] (0,0)-- (1,0);
				\draw [line width=0.6pt] (1,0)-- (2.121320343559643,2.1213203435596424);
				\draw [line width=0.6pt] (1,0)-- (2.121320343559642,-2.121320343559643);
				\begin{scriptsize}
				
				\draw [fill=black] (0,0) circle (3.5pt);
				\draw[color=black] (0.15,0.4) node {0};
				\draw [fill=black] (0,1) circle (3.5pt);
				\draw[color=black] (0,1.25) node {2};
				\draw [fill=black] (-0.7071067811865475,0.7071067811865476) circle (3.5pt);
				\draw[color=black] (-0.9,0.9) node {3};
				\draw [fill=black] (0.7071067811865476,0.7071067811865475) circle (3.5pt);
				\draw [fill=black] (-1,0) circle (3.5pt);
				\draw[color=black] (-1.25,0) node {4};
				\draw [fill=black] (-0.7071067811865477,-0.7071067811865475) circle (3.5pt);
				\draw[color=black] (-0.9,-0.9) node {5};
				\draw [fill=black] (0,-1) circle (3.5pt);
				\draw[color=black] (0,-1.3) node {9};
				\draw [fill=black] (0.7071067811865476,0.7071067811865475) circle (3.5pt);
				\draw [fill=black] (2.121320343559642,-2.121320343559643) circle (3.5pt);
				\draw[color=black] (2.34,-2.22) node {4};
				\draw [fill=black] (-0.00527335476512715,2.9978157049370795) circle (3.5pt);
				\draw[color=black] (-0.04,3.25) node {9};
				\draw [fill=black] (-2.1252529207063,2.1118262624771043) circle (3.5pt);
				\draw[color=black] (-2.34,2.3) node {7};
				\draw [fill=black] (-3,0) circle (3.5pt);
				\draw[color=black] (-3.26,0.14) node {1};
				\draw [fill=black] (0,-3) circle (3.5pt);
				\draw[color=black] (0,-3.3) node {2};
				\draw [fill=black] (-2.130920163093153,-2.0981443290455077) circle (3.5pt);
				\draw[color=black] (-2.38,-2.14) node {6};
				\draw [fill=black] (2.996821354963606,-0.007673927956833637) circle (3.5pt);
				\draw[color=black] (3.2,0) node {1};
				\draw [fill=black] (0.7071067811865476,0.7071067811865475) circle (3.5pt);
				\draw [fill=black] (0.7071067811865476,0.7071067811865475) circle (3.5pt);
				\draw [fill=black] (0.7071067811865476,0.7071067811865475) circle (3.5pt);
				\draw [fill=black] (0.7071067811865476,0.7071067811865475) circle (3.5pt);
				\draw [fill=black] (0.7071067811865476,0.7071067811865475) circle (3.5pt);
				\draw [fill=black] (0.7071067811865476,0.7071067811865475) circle (3.5pt);
				\draw[color=black] (0.9,0.9) node {6};
				\draw [fill=black] (0.7071067811865474,-0.7071067811865477) circle (3.5pt);
				\draw[color=black] (0.9,-0.9) node {8};
				\draw [fill=black] (2.121320343559643,2.1213203435596424) circle (3.5pt);
				\draw[color=black] (2.35,2.3) node {5};
				\draw [fill=black] (1,0) circle (3.5pt);
				\draw[color=black] (1.25,0) node {7};
				\end{scriptsize}
				\end{tikzpicture}
				\caption{\label{fig5}A $9$-$L(2,1)$-labeling of $M(C_8)$  }
			\end{minipage}
		\end{figure}
		\par For $n\geq 9$, we partition the vertex set $V(C_n)$ into cliques in  $\overline{C^2_n}$ as following. 	
		\par If $n\equiv 0\,(\text{mod }3)$, for $0\leq i\leq \frac{n}{3}-1$, the sets  $S_i=\{v_i,v_{i+\frac{n}{3}},v_{i+2\frac{n}{3}}  \}$ form disjoint cliques of order $3$ in $\overline{C^2_n}$. We have $V(C_n)=\cup_{i=0} S_i$.  
		\par If $n\equiv 1\,(\text{mod }3)$, for $0\leq i\leq \lfloor \frac{n}{3}\rfloor-1$, the sets  $S_i=\{v_i,v_{i+\lfloor \frac{n}{3}\rfloor},v_{i+2\lfloor \frac{n}{3}\rfloor}  \}$ form disjoint cliques of order $3$ in $\overline{C^2_n}$. We have  $V(C_n)=\cup_{i=0} S_i \cup \{v_{n-1}\}$. 
		\par If $n\equiv 2\,(\text{mod }3)$, for  $1\leq i\leq \lceil \frac{n}{3}\rceil-1$, the sets  $S_i=\{v_i,v_{i+\lceil \frac{n}{3}\rceil},v_{i+2\lceil \frac{n}{3}\rceil-1}  \}$ form disjoint cliques of order $3$ in $\overline{C^2_n}$, and $v_0v_{\lceil \frac{n}{3}\rceil}$ is an edge in $\overline{C^2_n}$. We have $V(C_n)=\cup_{i=1} S_i \cup\{v_0,v_{\lceil \frac{n}{3}\rceil} \}$.\par
		The cycle $C_n$ in the three cases verifies the condition in Lemma~\ref{lemm3.1}. Hence    $\lambda(M(C_n))=n+1$ for  $n\geq 6$.
	\end{itemize}
	\vspace{-4\topsep}
	\end{proof}
For a connected graph $G$ of order $n$, in Theorem~\ref{th3.1} we have $\lambda(M(G))\geq n+1$. It means that for any fixed positive integer $k$, there are finitely many connected graphs having $\lambda(M(G))=k$. In the following we characterize the connected graphs with $\lambda(M(G))$ equal to $4,6\text{ and }7$, these are the smallest possible values for the $\lambda$-number of the Mycielski of any non-trivial connected graph.
\begin{cor}\label{cor3.6}
	For a connected graph $G$, we have\\
	 $1)$	$\lambda(M(G))=4$   if and only if  $G$ is $K_2$,\\ 
	 $2)$	$\lambda(M(G))=6$   if and only if  $G \in \{ P_3, P_4, C_3  \}$,\\
	$3)$	$\lambda(M(G))=7$   if and only if  $G \in \{ P_5, P_6, C_6  \}$.	
\end{cor}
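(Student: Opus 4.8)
The plan is to exploit \emph{both} lower bounds in Theorem~\ref{th3.1} simultaneously. For a connected graph $G$ of order $n$ and maximum degree $\bigtriangleup$ we have $\lambda(M(G)) \ge \max(n+1,\,2(\bigtriangleup+1))$, so fixing a target value $k$ forces at once an upper bound $n \le k-1$ on the order \emph{and} an upper bound $2(\bigtriangleup+1)\le k$ on the maximum degree. The degree bound is the decisive one: for each of $k=4,6,7$ it yields $\bigtriangleup\le 2$, and a connected graph with $\bigtriangleup\le 2$ is necessarily a path $P_n$ or a cycle $C_n$. Thus the \emph{a priori} large family of connected graphs of bounded order collapses to the short lists of paths and cycles already evaluated in Proposition~\ref{th3.6} and Proposition~\ref{thm3.7}. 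Each equivalence then reduces to reading off finitely many precomputed values, and the ``if'' directions follow immediately from those same propositions together with $\lambda(M(K_2))=4$.

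For part $1)$, setting $k=4$ gives $2(\bigtriangleup+1)\le 4$, i.e. $\bigtriangleup\le 1$; the only connected graph on at least two vertices with maximum degree at most $1$ is $K_2$, and $\lambda(M(K_2))=4$, which settles the equivalence outright. For parts $2)$ and $3)$ the bound $2(\bigtriangleup+1)\le k$ gives $\bigtriangleup\le 2$ for both $k=6$ and $k=7$, so $G$ is a path or a cycle, while the order bound $n\le k-1$ confines attention to $n\le 5$ when $k=6$ and to $n\le 6$ when $k=7$.

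I would finish by tabulating $\lambda(M(P_n))$ and $\lambda(M(C_n))$ on these small orders from Proposition~\ref{th3.6} and Proposition~\ref{thm3.7} and retaining exactly the graphs whose value equals $k$. For $k=6$ the candidate set is $\{P_2,P_3,P_4,P_5,C_3,C_4,C_5\}$ with values $4,6,6,7,6,8,10$, leaving $P_3,P_4,C_3$; for $k=7$ the candidate set is $\{P_2,\dots,P_6,C_3,C_4,C_5,C_6\}$ with the additional values $7$ for $P_6$ and $C_6$, leaving $P_5,P_6,C_6$. There is no genuine obstacle beyond this bookkeeping; the single point requiring care is to invoke the degree bound of Theorem~\ref{th3.1} rather than $n+1$ alone, since bounding the order by itself would still leave dense graphs (e.g.\ $K_3,\dots,K_6$) to examine, whereas $\bigtriangleup\le 2$ eliminates them by restricting $G$ to paths and cycles.
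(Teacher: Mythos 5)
Your proposal is correct and follows essentially the same route as the paper's proof: both use the lower bound $\lambda(M(G))\geq \max(n+1,2(\bigtriangleup+1))$ from Theorem~\ref{th3.1} to force $\bigtriangleup\leq 2$ (hence $G$ is $K_2$, a path, or a cycle) and then read off the values from Proposition~\ref{th3.6} and Proposition~\ref{thm3.7}. Your additional use of the order bound $n\leq k-1$ to make the candidate list explicitly finite is harmless bookkeeping that the paper handles implicitly via the closed formulas $\lambda(M(P_n))=\lambda(M(C_n))=n+1$ for $n\geq 6$.
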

\begin{proof}
	From Theorem~\ref{th3.1}, for a connected graph $G$ of order $n$ and maximum degree $\bigtriangleup$, we have $\lambda(M(G)) \geq max(n+1,2(\bigtriangleup+1))$. This means if $\bigtriangleup\geq 3$, then $\lambda(M(G))\geq 8$. The only connected graph with $\bigtriangleup=1$ is $K_2$, and we have from Theorem~\ref{th3.4},  $\lambda(M(K_2))=4$. If $\bigtriangleup=2$, then $G$ is either a path or a cycle, from Theorem~\ref{th3.1}  we have $\lambda(M(G))\geq 6$. By using Proposition~\ref{th3.6} and Proposition~\ref{thm3.7}, we can conclude the results.
\end{proof}

\section{The iterated Mycielski of a graph $M^t(G)$}\label{sec4}
\subsection{Bounds for $\lambda(M^t(G))$}
\begin{theorem}\label{th4.1}
	If $G$ is a  graph of order $n\geq 2$ and maximum degree $\bigtriangleup\geq 0$. For $t\geq 2$, we have $$2^{t-1}max(n+2, 2(\bigtriangleup+2))-2 \leq \lambda(M^t (G))\leq  (2^t-1)(n+1)+\lambda(G).$$
\end{theorem}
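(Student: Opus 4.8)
The plan is to prove the two inequalities separately: the upper bound follows by iterating the bound of Theorem~\ref{th3.1}, and the lower bound follows by embedding suitable diameter-two subgraphs into $M^t(G)$.

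For the upper bound, I would set $G_k = M^k(G)$ and apply the upper bound of Theorem~\ref{th3.1} to $G_{k-1}$, whose order is $|G_{k-1}| = 2^{k-1}(n+1)-1$ by Lemma~\ref{lem2.1}. This gives $\lambda(G_k) = \lambda(M(G_{k-1})) \leq (|G_{k-1}|+1) + \lambda(G_{k-1}) = 2^{k-1}(n+1) + \lambda(G_{k-1})$. Telescoping this recursion from $k=t$ down to $k=1$ and summing the geometric series $\sum_{k=1}^{t}2^{k-1} = 2^t - 1$ yields $\lambda(M^t(G)) \leq (2^t-1)(n+1) + \lambda(G)$. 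This part is routine once the recursion is set up.

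For the lower bound, I would first record the identity $2^{t-1}\max(n+2, 2(\Delta+2)) - 2 = \max\bigl(2^{t-1}(n+2)-2,\, 2^t(\Delta+2)-2\bigr)$ and establish each term by exhibiting a subgraph of $M^t(G)$ whose iterated Mycielski has diameter two; such a graph $H$ forces pairwise distinct labels, so $\lambda(H)\geq |H|-1$, and by Observation~\ref{lem2.2} together with Lemma~\ref{lem2.7} any such subgraph propagates a lower bound to $\lambda(M^t(G))$. For the degree term, the star $K_{1,\Delta}$ is a subgraph of $G$, so $M^t(K_{1,\Delta})$ is a subgraph of $M^t(G)$; since for $\Delta\geq 1$ the star $K_{1,\Delta}$ has no isolated vertex and diameter at most two, Lemma~\ref{lem2.5} gives $diam(M^t(K_{1,\Delta}))=2$, whence $\lambda(M^t(G)) \geq |M^t(K_{1,\Delta})| - 1 = 2^t(\Delta+2) - 2$ by Lemma~\ref{lem2.1}. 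For the order term, the crucial observation is that inside $M(G)$ the root $u$ together with the copy layer $V'=\{v'_1,\dots,v'_n\}$ spans a star $K_{1,n}$, since the copies form an independent set and each is joined to $u$; applying $M^{t-1}$ and using $M^{t-1}(M(G)) = M^t(G)$, the graph $M^{t-1}(K_{1,n})$ is a subgraph of $M^t(G)$ of diameter two (Lemma~\ref{lem2.5}, as $t-1\geq 1$), so $\lambda(M^t(G)) \geq |M^{t-1}(K_{1,n})| - 1 = 2^{t-1}(n+2) - 2$. Taking the maximum of the two bounds gives the claim.

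The hard part is not the arithmetic but identifying the right \emph{witness} for the order term. The naive lower bound $\Delta_{M^t}+1 = 2^{t-1}(n+1)$ coming from Lemma~\ref{lem2.3} and Lemma~\ref{lem2.6} falls short of $2^{t-1}(n+2)-2$ by $2^{t-1}-2$, which is strictly positive for $t\geq 3$, so the root degree alone does not suffice. Recognizing that the root plus the copy layer of $M(G)$ already forms a star on $n+1$ vertices, and that one should then apply only $t-1$ further Mycielski steps to it rather than working inside $G$, is exactly what closes this gap. Finally I would dispose of the degenerate case $\Delta=0$ (edgeless $G$) by noting that $n\geq 2$ forces $n+2 \geq 2(\Delta+2)=4$, so the order term dominates and no $K_{1,\Delta}$ is required.
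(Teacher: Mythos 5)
Your proposal is correct and follows essentially the same route as the paper's own proof: the lower bound comes from the diameter-two subgraphs $M^{t-1}(K_{1,n})\subseteq M^t(G)$ (using the root plus copy layer of $M(G)$) and $M^t(K_{1,\Delta})\subseteq M^t(G)$, and the upper bound from telescoping the bound of Theorem~\ref{th3.1}. Your explicit treatment of the $\Delta=0$ case and the remark on why the naive degree bound falls short are minor refinements of the same argument, not a different approach.
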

\begin{proof}
	For a graph $G$ of order $n\geq 2$ from Definition~\ref{def1}, we have $K_{1,n}$ is a subgraph of $M(G)$. Then by Observation~\ref{lem2.2}, $M^{t-1}(K_{1,n})$ is a subgraph of $M^t(G)$. Since $diam(K_{1,n})=2$, it follows from Lemma~\ref{lem2.5} and Lemma~\ref{lem2.7} that $ \lambda(M^t (G)) \geq \lambda(M^{t-1}(K_{1,n}))\geq |M^{t-1}(K_{1,n})|-1$.  By Lemma~\ref{lem2.1} $|M^{t-1}(K_{1,n})|=2^{t-1}(n+2)-1$, hence $ \lambda(M^t (G)) \geq 2^{t-1}(n+2)-2$, for $t\geq 2$. If $\bigtriangleup \geq 1$,  we have  $K_{1,\bigtriangleup}$ is a subgraph of $G$. By using the same arguments as preceding, we get that  $ \lambda(M^t (G))\geq 2^t(\bigtriangleup+2)-2$.
	\par On the other hand, for $t\geq2$, we have $M^t(G)=M(M^{t-1}(G))$. So by the upper bound of Theorem~\ref{th3.1},  $\lambda(M^t(G) )\leq (|M^{t-1}(G)|+1)+\lambda(M^{t-1}(G))=2^{t-1}(n+1)+\lambda(M^{t-1}(G))$. Recursively we get that $\lambda(M^t(G) )\leq \sum_{i=0}^{t-1}2^i(n+1)+\lambda(G)=(2^t-1)(n+1)+\lambda(G). $
\end{proof}
Notice that the lower bound $2^{t-1}(n+2)-2$ and the upper bound of Theorem~\ref{th4.1}, are true even for the trivial graph $K_1$. The upper bound coincides with the upper bound in Theorem~\ref{th3.1} for $t=1$. As a consequence we make the following observation.
\begin{obs}\label{obs4.1}
	If a graph $G$ of order $n$  has $\lambda(G)\leq n-1$, then for any $t\geq 1$, $\lambda(M^t(G))\leq |M^t(G) |-1=2^t(n+1)-2$,  there is equality if $G$ is of diameter two.
\end{obs}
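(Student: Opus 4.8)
The plan is to obtain the upper bound as an immediate specialization of the general upper bound already proved, and then to secure the equality in the diameter-two case through the distance argument for diameter-two graphs.

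First I would invoke the upper bound of Theorem~\ref{th4.1}, which (as remarked just before this observation) coincides with the upper bound of Theorem~\ref{th3.1} when $t=1$; hence for every $t\geq 1$ we have $\lambda(M^t(G))\leq (2^t-1)(n+1)+\lambda(G)$. Substituting the hypothesis $\lambda(G)\leq n-1$ gives $\lambda(M^t(G))\leq (2^t-1)(n+1)+(n-1)=2^t(n+1)-2$. By Lemma~\ref{lem2.1} we have $|M^t(G)|=2^t(n+1)-1$, so the right-hand side equals exactly $|M^t(G)|-1$, which establishes the claimed upper bound for all $t\geq 1$.

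For the equality under the extra assumption that $G$ has diameter two, I would argue as follows. A diameter-two graph is connected and, since $n\geq 2$, has no isolated vertices, so Lemma~\ref{lem2.5} applies and yields $diam(M^t(G))=\min(\max(2,diam(G)),4)=2$ for every $t\geq 1$. In a diameter-two graph every pair of distinct vertices is at distance at most $2$, and the $L(2,1)$-labeling conditions force any two such vertices to receive distinct labels; consequently all $|M^t(G)|$ vertices carry pairwise distinct labels, giving $\lambda(M^t(G))\geq |M^t(G)|-1$. Combining this with the upper bound just derived produces the equality $\lambda(M^t(G))=|M^t(G)|-1=2^t(n+1)-2$.

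I do not expect a genuine obstacle here, since both inequalities reduce to results already in hand. The only points requiring care are verifying that the general upper bound is valid over the full range $t\geq 1$ (using Theorem~\ref{th3.1} at $t=1$ and Theorem~\ref{th4.1} for $t\geq 2$), and confirming that the diameter-two property propagates through the iterated Mycielski construction, which is precisely what Lemma~\ref{lem2.5} guarantees.
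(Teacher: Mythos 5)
Your proposal is correct and follows the same route the paper intends: the observation is stated as an immediate consequence of the upper bound $(2^t-1)(n+1)+\lambda(G)$ from Theorems~\ref{th3.1} and~\ref{th4.1}, and the equality in the diameter-two case comes from Lemma~\ref{lem2.5} together with the standard fact that all vertices of a diameter-two graph must receive distinct labels. The arithmetic $(2^t-1)(n+1)+(n-1)=2^t(n+1)-2=|M^t(G)|-1$ and the propagation of diameter two through the iterated construction are exactly the points the paper relies on.
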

Further, we denote $V^t=\{v^k_i : 1\leq i\leq n \text{ and } 0\leq k\leq 2^t-1 \}$, the set composed of the vertices of $V$ and all their copies in $M^t(G)$, where $v^1_i$ is the copy of $v^0_i$ in $M(G)$. $v^2_i$ and $v^3_i$ are respectively the copies of $v^0_i$ and $v^1_i$ in $M^2(G)$. $v^4_i,v^5_i,v^6_i,v^7_i$ are respectively the copies of  $v^0_i,v^1_i,v^2_i,v^3_i$ in $M^3(G)$ and so forth. In $M^t(G)$ for $0 \leq k \leq 2^{t-1}-1 $, we have $v_i^{2^{t-1}+k}$ is the exact copy of the vertex $v_i^k$ from $M^{t-1}(G)$.  For $t\geq 2$, let $U_t$ be the set of all the roots (i.e. roots and their consecutive copies in all levels) in $M^t(G)$. Recursively $U_t=U_{t-1}\cup U'_{t-1}\cup \{u_{t,0}\}$ and $|U_t|=2^t-1$. We denote the set of roots $U_t=\{u_{i,j}: 1\leq i\leq t \text{ and } 0\leq j\leq 2^{t-i}-1 \}$,  such that for example in $M^3(G)$, $u_{1,0}$ is the root of $M(G)$, $u_{1,1}$ the copy of $u_{1,0}$, and $u_{2,0}$ the root of $M^2(G)$. $u_{1,2}$, $u_{1,3}$, $u_{2,1}$ are respectively the copies of $u_{1,0}$, $u_{1,1}$, $u_{2,0}$, and $u_{3,0}$ is the root in $M^3(G)$, and so forth. Figure~\ref{fig6} illustrate an adjacency of a vertex and its copies $v^k_i$ in $M^2(G)$, with respect to the above ordering.
\begin{figure}[h]
	\centering
	\begin{tikzpicture} [scale=0.6]
	\clip(-11.5,-0.16) rectangle (0.5,5.57);
	\draw [line width=0.6pt] (-7,3.31)-- (-3.98,1.45);
	\draw [line width=0.6pt] (-4,3.33)-- (-7,1.47);
	\draw [line width=0.6pt] (-7,3.31)-- (-4,0);
	\draw [line width=0.6pt] (-4,3.33)-- (-7,0);
	\draw [line width=0.6pt] (-7,4.45)-- (-4,0);
	\draw [line width=0.6pt] (-4.02,4.43)-- (-7,0);
	\draw [line width=0.6pt] (-7,0)-- (-4,0);
	\draw [line width=0.6pt] (-4.02,4.43)-- (-2.5,2.35);
	\draw [line width=0.6pt] (-2.5,2.35)-- (-3.98,1.45);
	\draw [line width=0.6pt] (-1.58,3.95)-- (-3.98,1.45);
	\draw [line width=0.6pt] (-1.56,3.95)-- (-1.58,5.35);
	\draw [line width=0.6pt] (-1.56,5.35)-- (-4.02,4.43);
	\draw [line width=0.6pt] (-1.56,5.35)-- (-4,3.33);
	\draw [line width=0.6pt] (-7,1.47)-- (-4,0);
	\draw [line width=0.6pt] (-3.98,1.45)-- (-7,0);
	\begin{scriptsize}
	\draw [fill=black] (-7,3.31) circle (4pt);
	\draw[color=black] (-7.3,3.45) node {$v^2_j$};
	\draw [fill=black] (-7,4.45) circle (4pt);
	\draw[color=black] (-7.3,4.66) node {$v^3_j$};
	\draw [fill=black] (-7,1.47) circle (4pt);
	\draw[color=black] (-7.3,1.69) node {$v^1_j$};
	\draw [fill=black] (-7,0) circle (4pt);
	\draw[color=black] (-7.3,0.26) node {$v^0_j$};
	\draw [fill=black] (-4.02,4.43) circle (4pt);
	\draw[color=black] (-4.3,4.64) node {$v^3_i$};
	\draw [fill=black] (-4,3.33) circle (4pt);
	\draw[color=black] (-3.6,3.22) node {$v^2_i$};
	\draw [fill=black] (-3.98,1.45) circle (4pt);
	\draw[color=black] (-3.58,1.4) node {$v^1_i$};
	\draw [fill=black] (-4,0) circle (4pt);
	\draw[color=black] (-3.62,0.18) node {$v^0_i$};
	\draw [fill=black] (-2.5,2.35) circle (4pt);
	\draw[color=black] (-2,2.44) node {$u_{10}$};
	\draw [fill=black] (-1.58,3.95) circle (4pt);
	\draw[color=black] (-1,4.02) node {$u_{11}$};
	\draw [fill=black] (-1.56,5.35) circle (4pt);
	\draw[color=black] (-1,5.42) node {$u_{20}$};
	\end{scriptsize}
	\end{tikzpicture}
	\caption{\label{fig6} An example of an adjacency of the vertices  $v^k_i$  in  $M^2(G)$}	
\end{figure}
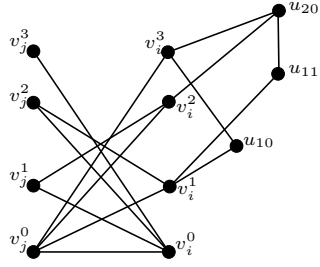
\begin{lemma}\label{lem4.1}
	If $d_G(v^0_i,v^0_j)\leq 2$,  then for any $t\geq 1$ and all $0\leq k,m\leq2^t-1$,  we have $d_{M^t}(v_i^k,v_j^m)\leq 2$, and if  $v^0_i$ is not an isolated vertex for $k\neq m$, we have $d_{M^t}(v_i^k,v_i^m)= 2$.  
\end{lemma}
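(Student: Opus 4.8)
The plan is to argue by induction on $t$, reducing distances in $M^t(G)=M(M^{t-1}(G))$ to distances in $H:=M^{t-1}(G)$ by means of Lemma~\ref{lem2.4}. For the base case $t=1$ I would simply read off all four combinations of $k,m\in\{0,1\}$ from Lemma~\ref{lem2.4}: writing $v_i^0=v_i$ and $v_i^1=v_i'$, the identities $d_M(v_i,v_j)=\min(4,d_G(v_i,v_j))$, $d_M(v_i,v_j')=\min(3,d_G(v_i,v_j))$ and $d_M(v_i',v_j')=2$ give $d_M(v_i^k,v_j^m)\le 2$ whenever $d_G(v_i,v_j)\le 2$, while $d_M(v_i,v_i')=2$ settles the diagonal equality.

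For the inductive step, with $t\ge 2$ and $H=M^{t-1}(G)$, I would use the indexing fixed above: for $0\le k\le 2^{t-1}-1$ the vertex $v_i^k$ lies in the copy of $H$ inside $M(H)$, and for $2^{t-1}\le k\le 2^t-1$, writing $k=2^{t-1}+k'$, one has $v_i^k=(v_i^{k'})'$, the Mycielski copy of $v_i^{k'}\in V(H)$. Given $d_G(v_i^0,v_j^0)\le 2$, the induction hypothesis yields $d_H(v_i^{k'},v_j^{m'})\le 2$ for all admissible $k',m'$. Splitting according to whether each of $v_i^k,v_j^m$ lies in the $H$-part or is a copy, Lemma~\ref{lem2.4} applied to $M(H)$ gives in the three position-types the bounds $\min(4,d_H)\le 2$, $\min(3,d_H)\le 2$, and $2$ respectively, so $d_{M^t}(v_i^k,v_j^m)\le 2$ in every case.

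It remains to upgrade the diagonal bound to an equality when $v_i^0$ is not isolated. First I would record, using the degree formulas $\deg_M(x)=2\deg_G(x)$ and $\deg_M(x')=\deg_G(x)+1$ from the proof of Lemma~\ref{lem2.3}, that non-isolation of $v_i^0$ in $G$ propagates to all copies $v_i^{k'}$ in $H$, so that Lemma~\ref{lem2.4} genuinely applies to them. Then for $k\ne m$ the vertices $v_i^k,v_i^m$ are distinct, hence $d_{M^t}\ge 1$, and it suffices to exclude adjacency. If both lie in the $H$-part, then $k'\ne m'$ and the induction hypothesis gives $d_H(v_i^{k'},v_i^{m'})=2$, so they are non-adjacent; if one is in the $H$-part and the other a copy, the Mycielski edge rule makes $v_i^{k'}$ and $(v_i^{m'})'$ adjacent only when $v_i^{k'}v_i^{m'}\in E(H)$, which is again excluded (either $k'=m'$, giving a vertex and its own copy, or $k'\ne m'$ with $d_H=2$); and two distinct copies are never adjacent in the Mycielski construction. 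Hence $d_{M^t}(v_i^k,v_i^m)=2$.

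The main obstacle is bookkeeping rather than depth: one must keep the recursive labelling of the copies straight and invoke the correct clause of Lemma~\ref{lem2.4} in each of the three position-types, and one must handle the isolated-vertex subtleties with care. A useful observation that streamlines this is that for $i\ne j$ the hypothesis $d_G(v_i^0,v_j^0)\le 2$ already forces both endpoints to be non-isolated, so the degenerate diagonal case $i=j$ is precisely where the non-isolation hypothesis of the second assertion is needed. I would organize the case analysis cleanly by encoding each index as $k=\epsilon_k 2^{t-1}+k'$ with $\epsilon_k\in\{0,1\}$ and $0\le k'\le 2^{t-1}-1$, reducing the whole argument to the value of the pair $(\epsilon_k,\epsilon_m)$.
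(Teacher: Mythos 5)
Your proof is correct and follows exactly the route the paper intends: the paper's own proof is the single line ``By using Lemma~\ref{lem2.4} inductively, we get the results,'' and your argument is precisely that induction on $t$ through $M^t(G)=M(M^{t-1}(G))$, carried out with the bookkeeping made explicit. The care you take with non-isolation propagating to the copies is a worthwhile detail the paper leaves implicit.
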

\begin{proof}
	By using Lemma~\ref{lem2.4} inductively, we get the results. 
\end{proof}
The \textit{eccentricity} of a vertex $v$ in a graph $G$, being the greatest distance between $v$ and any other vertex in $G$. By Lemma~\ref{lem4.1}, if a vertex has eccentricity $1$ or $2$ in $G$, then the vertex and all its copies are of eccentricity $2$ in $M^t(G)$. In a graph $G$ without isolated vertices, we have from the definition of the Mycielski construction, the eccentricity of the root in $M(G)$ is $2$, so from above  the eccentricity of all the roots and their copies is $2$ in $M^t(G)$, for any $t\geq 1$.

\begin{prop}\label{prop4.1}
	If $G$ is a graph without isolated vertices of order $n$, with $k$ vertices of eccentricity $2$, for $t\geq 1$, we have $\lambda(M^t(G))\geq 2^{t-1}(n+k+2)-2.$
\end{prop}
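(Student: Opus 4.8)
The plan is to exhibit, inside $M^t(G)$, a large set $S$ of vertices that are pairwise at distance at most $2$; since any two such vertices must receive distinct labels under any $L(2,1)$-labeling, this yields $\lambda(M^t(G))\geq |S|-1$. Write $H=M^{t-1}(G)$, so that $M^t(G)=M(H)$, and let $V'(H)$ denote the copy vertices and $u_{t,0}$ the root produced in this last Mycielski step. I would take
$$S=\{u_{t,0}\}\cup V'(H)\cup W,$$
where $W$ is the set of those original vertices of $H$ (i.e. vertices lying in the embedded copy of $H$ inside $M(H)$) whose eccentricity \emph{in $H$} is at most $2$.

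First I would check that $S$ is pairwise at distance at most $2$ in $M^t(G)$, using Lemma~\ref{lem2.4} applied to $M(H)$. Any two copy vertices satisfy $d_{M(H)}(x',y')=2$, and $d_{M(H)}(u_{t,0},x')=1$, so $\{u_{t,0}\}\cup V'(H)$ is already pairwise within distance $2$. For $w\in W$ and a copy $y'$ we have $d_{M(H)}(w,y')=\min(3,d_H(w,y))\le 2$ because $w$ has eccentricity at most $2$ in $H$; moreover $d_{M(H)}(w,w')=2$. Since $H$ has no isolated vertices ($G$ has none, and iterated Mycielski graphs of graphs without isolated vertices are connected), $w$ is adjacent to some copy vertex, giving $d_{M(H)}(w,u_{t,0})=2$. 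Finally, for $w_1,w_2\in W$, $d_{M(H)}(w_1,w_2)=\min(4,d_H(w_1,w_2))\le 2$. Thus $S$ is a set of pairwise distance-$\le 2$ vertices.

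It remains to count $|S|$. The three pieces are disjoint, so $|S|=1+|H|+|W|$ with $|H|=2^{t-1}(n+1)-1$ by Lemma~\ref{lem2.1}. For $|W|$, I would count the eccentricity-$\le 2$ vertices of $H=M^{t-1}(G)$ using Lemma~\ref{lem4.1} and the remarks following it: each of the $k$ eccentricity-$2$ vertices of $G$ contributes all $2^{t-1}$ of its copies in $M^{t-1}(G)$, all of eccentricity $2$, giving at least $k\,2^{t-1}$ vertices; and every root and root-copy in $M^{t-1}(G)$ has eccentricity $2$, contributing the $|U_{t-1}|=2^{t-1}-1$ further vertices. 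These two families are disjoint, so $|W|\ge k\,2^{t-1}+2^{t-1}-1$. Combining,
$$|S|\ge \bigl(2^{t-1}(n+1)-1\bigr)+1+\bigl(k\,2^{t-1}+2^{t-1}-1\bigr)=2^{t-1}(n+k+2)-1,$$
whence $\lambda(M^t(G))\ge |S|-1=2^{t-1}(n+k+2)-2$, as required. For $t=1$ this specializes correctly, with $H=G$, $U_0=\emptyset$, and $|W|\ge k$.

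The step I expect to be the crux is the choice of $S$: the naive set of all globally eccentricity-$2$ vertices of $M^t(G)$ is in fact too small, since a copy vertex $y'$ of the last step may have global eccentricity $3$ in $M^t(G)$ and so fail to be globally eccentricity-$2$. The gain comes from observing that \emph{all} copy vertices of the final Mycielski layer are nonetheless pairwise within distance $2$ (together with the final root), so the whole set $V'(H)\cup\{u_{t,0}\}$ of size $2^{t-1}(n+1)$ may be used, and only then extended by the eccentricity-$\le 2$ original vertices. Verifying this distance structure and the bookkeeping of the copies is where care is needed, but it reduces entirely to Lemma~\ref{lem2.4} and Lemma~\ref{lem4.1}.
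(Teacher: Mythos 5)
Your proposal is correct and follows essentially the same route as the paper: the paper's distance-two set is exactly $\bigcup_{i=1}^{k}V^{t-1}_i\cup V'_{t-1}\cup U_{t-1}\cup\{u_{t,0}\}$, which is what your $\{u_{t,0}\}\cup V'(H)\cup W$ reduces to once you identify the members of $W$, and both arguments rest on Lemma~\ref{lem2.4}/Lemma~\ref{lem4.1} with the same count $2^{t-1}(n+k+2)-1$. The only cosmetic difference is that you describe the third piece via eccentricity at most $2$ in $M^{t-1}(G)$ and then lower-bound its size, whereas the paper lists its elements directly.
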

\begin{proof}
	For $t\geq 1$, let $v^0_1,v^0_2,\ldots,v^0_k$ be the vertices of eccentricity $2$ in $G$. Let $V^{t-1}_i$ be the set composed of a vertex $v^0_i$ and all its copies in $M^{t-1}(G)$. In $M^t(G)$, by Lemma~\ref{lem4.1} and Definition~\ref{def1}, the vertices in $\cup_{i=1}^{k}V^{t-1}_i\cup V'_{t-1}\cup U_{t-1}\cup \{u_{t,0}\} $ are all within distance two, where $U_{t-1}$ is the set of roots and their copies in $M^{t-1}(G)$, $V'_{t-1}$ is the set of copies of the vertices of $M^{t-1}(G)$ in $M^t(G) $, and $u_{t,0}$ is the root of $M^t(G)$. Hence $\lambda(M^t(G))\geq\sum_{i=1}^{k}|V^{t-1}_i|+ |V'_{t-1}|+|U_{t-1}|=k2^{t-1}+2^{t-1}(n+1)-1+2^{t-1}-1= 2^{t-1}(n+k+2)-2.$ 
\end{proof}
For a graph $G$ of order $n$, by Proposition~\ref{prop4.1} if $\lambda(M(G))=n+1$, then $G$ has at most one vertex of eccentricity $2$. Also for $t\geq 2$, if $\lambda(M^t(G))=2^{t-1}(n+2)-2$, then no vertex in $G$ has eccentricity $2$. There exist graphs with one vertex of eccentricity $2$ and $\lambda(M(G))=n+1$,  Figure~\ref{fig7} illustrate a tree graph $T$ of order $9$ with one vertex of eccentricity $2$,  having $\lambda(M(T))=10$. Therefore from Proposition~\ref{prop4.1}, we have $\lambda(M(G))=n+1$ does not mean necessary that $\lambda(M^t(G))=2^{t-1}(n+2)-2$, for $t\geq 2$.
\begin{figure}[h]
	\centering
	\begin{tikzpicture}[scale=1]
	\clip(-8.14,-0.71) rectangle (4.06,3.1);
	\draw [line width=1.8pt] (-4,1)-- (-3,1);
	\draw [line width=1.8pt] (-3,1)-- (-2,1);
	\draw [line width=1.8pt] (-2,1)-- (-1,1);
	\draw [line width=1.8pt] (-1,1)-- (0,1);
	\draw [line width=1.8pt] (-2,1)-- (-1.58,0.21);
	\draw [line width=1.8pt] (-1.58,0.21)-- (-1.18,-0.59);
	\draw [line width=1.8pt] (-2.38,1.75)-- (-2,1);
	\draw [line width=1.8pt] (-2.78,2.61)-- (-2.38,1.75);
	\draw [line width=0.3pt] (-4.18,1.61)-- (-3,1);
	\draw [line width=0.3pt] (-3.22,1.57)-- (-4,1);
	\draw [line width=0.3pt] (-3.22,1.57)-- (-2,1);
	\draw [line width=0.3pt] (-2.2,2.99)-- (-2.38,1.75);
	\draw [line width=0.3pt] (-2.78,2.61)-- (-1.82,2.19);
	\draw [line width=0.3pt] (-2.38,1.75)-- (-1.48,1.45);
	\draw [line width=0.3pt] (-1.48,1.45)-- (-1.58,0.21);
	\draw [line width=0.3pt] (-1.48,1.45)-- (-3,1);
	\draw [line width=0.3pt] (-1.48,1.45)-- (-1,1);
	\draw [line width=0.3pt] (-0.42,1.49)-- (-2,1);
	\draw [line width=0.3pt] (-0.42,1.49)-- (0,1);
	\draw [line width=0.3pt] (0.5,1.49)-- (-1,1);
	\draw [line width=0.3pt] (-2,1)-- (-0.98,0.21);
	\draw [line width=0.3pt] (-0.98,0.21)-- (-1.18,-0.59);
	\draw [line width=0.3pt] (-1.82,2.19)-- (-2,1);
	\draw [line width=0.3pt] (-1.58,0.21)-- (-0.62,-0.57);
	\begin{scriptsize}
	\draw [fill=black] (-4,1) circle (3pt);
	\draw[color=black] (-3.94,0.76) node {$7$};
	\draw [fill=black] (-3,1) circle (3pt);
	\draw[color=black] (-2.9,0.76) node {$10$};
	\draw [fill=black] (-2,1) circle (3pt);
	\draw[color=black] (-2.05,0.75) node {$1$};
	\draw [fill=black] (0,1) circle (3pt);
	\draw[color=black] (0.15,0.8) node {$3$};
	\draw [fill=black] (-2.38,1.75) circle (3pt);
	\draw[color=black] (-2.58,1.77) node {$6$};
	\draw [fill=black] (-2.78,2.61) circle (3pt);
	\draw[color=black] (-2.98,2.6) node {$3$};
	\draw [fill=black] (-1.58,0.21) circle (3pt);
	\draw[color=black] (-1.8,0.23) node {$4$};
	\draw [fill=black] (-1.18,-0.59) circle (3pt);
	\draw[color=black] (-1.4,-0.55) node {$7$};
	\draw [fill=black] (-1,1) circle (3pt);
	\draw[color=black] (-0.84,0.84) node {$8$};
	\draw [fill=black] (-1.82,2.19) circle (3pt);
	\draw[color=black] (-1.65,2.33) node {$5$};
	\draw [fill=black] (-2.2,2.99) circle (3pt);
	\draw[color=black] (-1.99,3) node {$4$};
	\draw [fill=black] (-1.48,1.45) circle (3pt);
	\draw[color=black] (-1.32,1.6) node {$2$};
	\draw [fill=black] (-0.42,1.49) circle (3pt);
	\draw[color=black] (-0.26,1.6) node {$7$};
	\draw [fill=black] (0.5,1.49) circle (3pt);
	\draw[color=black] (0.76,1.6) node {$10$};
	\draw [fill=black] (-0.98,0.21) circle (3pt);
	\draw[color=black] (-0.82,0.37) node {$3$};
	\draw [fill=black] (-0.62,-0.57) circle (3pt);
	\draw[color=black] (-0.46,-0.4) node {$6$};
	\draw [fill=black] (-3.22,1.57) circle (3pt);
	\draw[color=black] (-3.2,1.8) node {$9$};
	\draw [fill=black] (-4.18,1.61) circle (3pt);
	\draw[color=black] (-4.18,1.83) node {$8$};
	\draw [fill=black] (-1,2.5) circle (3pt);
	\draw[color=black] (-0.8,2.6) node {0};
	\draw[color=black] (-1.21,2.6) node {$u$};
	\end{scriptsize}
	\end{tikzpicture}
	\caption{\label{fig7} A $10$-$L(2,1)$-Labeling of the Mycielski graph of a tree $T$ of order $9$.}
\end{figure}
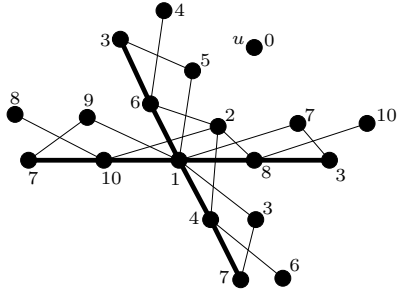

\subsection{Graphs with $\lambda(M^t(G))=2^t(n+1)-2$ }
Shao and Solis-Oba in \cite{shao}, gave bounds for the $\lambda$-number of some iterated Mycielski of complete graph $K_n$. In the following, we give the exact value of the $\lambda$-number of $M^t(K_n)$, for any $t\geq 2$.
\begin{theorem}\label{th4.2}
	For any $t\geq 2$ and $n\geq 2$, we have $\lambda(M^t(K_n))=2^t(n+1)-2$.	
\end{theorem}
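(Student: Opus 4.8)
The plan is to pin the value from both sides: get the lower bound $\lambda(M^t(K_n))\ge 2^t(n+1)-2$ from the fact that $M^t(K_n)$ has diameter $2$, and get the matching upper bound $\lambda(M^t(K_n))\le 2^t(n+1)-2$ by writing $M^t(K_n)=M(G')$ with $G'=M^{t-1}(K_n)$ and invoking the $4$-star matching characterization of Theorem~\ref{th3.4}.

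For the lower bound, observe that for $n\ge 2$ the graph $K_n$ has no isolated vertices and $\mathrm{diam}(K_n)=1$, so Lemma~\ref{lem2.5} gives $\mathrm{diam}(M^t(K_n))=\min(\max(2,1),4)=2$ for every $t\ge 1$. In a diameter-two graph every pair of distinct vertices lies at distance $1$ or $2$, hence must receive distinct labels under any $L(2,1)$-labeling; therefore the span is at least $|M^t(K_n)|-1$, which by Lemma~\ref{lem2.1} equals $2^t(n+1)-2$.

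For the upper bound I would set $G'=M^{t-1}(K_n)$, so that $M^t(K_n)=M(G')$ and $2\,|G'|=2\bigl(2^{t-1}(n+1)-1\bigr)=2^t(n+1)-2$. By Theorem~\ref{th3.4}$(a)$ it then suffices to prove $i_4(\overline{G'})\le 4$. Writing $G'=M(H)$ with $H=M^{t-2}(K_n)$ (so that $|H|\ge 2$ throughout the range $t\ge 2$, $n\ge 2$), I would exhibit a \emph{perfect} $4$-star matching of $\overline{M(H)}$ using only the three structural adjacencies of the complement of a Mycielski graph recalled just before Theorem~\ref{th3.2}: the copies form a clique, the root $u$ is adjacent to every original vertex, and each original $w_i$ is adjacent to its copy $w_i'$. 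Concretely, take the star $K_{1,2}$ centered at $u$ with leaves $w_1,w_2$, the edges $w_iw_i'$ for $3\le i\le |H|$, and the edge $w_1'w_2'$ inside the clique of copies; these cover all $2|H|+1$ vertices, so $i_4(\overline{G'})=0\le 4$. Applying Theorem~\ref{th3.4}$(a)$ yields $\lambda(M^t(K_n))\le 2^t(n+1)-2$, and combining with the lower bound gives equality.

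The only genuinely delicate point is verifying that this explicit matching is perfect and vertex-disjoint for every admissible $N=|H|$ (that the two originals spent as leaves of $u$'s star leave exactly two copies to be absorbed by the clique) and that $|H|\ge 2$ never fails; both are routine once the three adjacencies of $\overline{M(H)}$ are isolated. Note that I do not expect Observation~\ref{obs4.1} to help directly, since $\lambda(K_n)=2(n-1)>|K_n|-1$ makes its hypothesis inapplicable to the base graph $K_n$; routing the argument one Mycielski level up through the $4$-star matching condition of Theorem~\ref{th3.4} is precisely what makes the bound tight.
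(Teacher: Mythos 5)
Your proposal is correct, but it proves the upper bound by a genuinely different route than the paper does. The paper's own proof is constructive: it orders the vertices of $M^2(K_n)$ as $\chi_1\chi_2\ldots\chi_n v^3_n\ldots v^3_1 u_{1,1}u_{1,0}u_{2,0}$ (where $\chi_i$ interleaves $v_i^0$ with its first- and second-level copies) and assigns consecutive labels $0,1,2,\ldots$, checking that consecutive vertices in this order are always at distance exactly two; this gives $\lambda(M^2(K_n))=|M^2(K_n)|-1$, and the case $t\geq 3$ then follows by applying Observation~\ref{obs4.1} to $M^2(K_n)$ (not to $K_n$ itself --- your remark that the observation's hypothesis fails for the base graph $K_n$ is exactly right, and the paper sidesteps this the same way you anticipated, just one level higher). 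Your route instead writes $M^t(K_n)=M(G')$ with $G'=M(H)$ and exhibits a perfect $4$-star matching of $\overline{M(H)}$ --- the $K_{1,2}$ at the root, the edges $w_iw_i'$ for $i\geq 3$, and $w_1'w_2'$ in the clique of copies --- so that Theorem~\ref{th3.4}$(a)$ gives $\lambda(M(G'))\leq 2|G'|=2^t(n+1)-2$ in one step. This matching is indeed perfect and vertex-disjoint for every $|H|\geq 2$, which holds throughout the range $t\geq 2$, $n\geq 2$. The paper is aware of your route: the paragraph immediately after Theorem~\ref{th4.2} notes that the general bound $\lambda(M^t(G))\leq 2^t(n+1)-2$ "could also be proven using Theorem~\ref{th3.4} by showing that for any graph $G$, the complement of the Mycielski $\overline{M}(G)$ has a perfect $4$-star matching," which is precisely your construction. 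What the explicit labeling buys the authors is reuse: the same vertex ordering reappears in the proof that $\overline{M^t}(G)$ is Hamiltonian and in the proof of Theorem~\ref{th4.4}; what your argument buys is brevity and a cleaner reliance on the matching machinery already built in Section~3.
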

\begin{proof}
	For $n\geq 2$, we have  $diam(K_n)=1$, so by Lemma~\ref{lem2.5} for any $t\geq 2$, we have $diam(M^t(K_n))=2$. Let $V^2=\{v^k_i :0\leq k \leq 3 \text{ and } 1\leq i\leq n \ \}$ be the set composed of the vertice of $V$ and all their consecutive copies in $M^2(K_n)$. Let $\chi_i$ with $1\leq i\leq n$, be a sequence of vertices in $M^2(K_n)$, where $\chi_i=v^2_iv^0_iv^1_i$ if $i$ is odd and $\chi_i=v^1_iv^0_iv^2_i$ if $i$ is even.
	We label the vertices of $M^2(K_n)$ using consecutive labels beginning with $0$, in the following order $\chi_1\chi_2\ldots \chi_nv^3_nv^3_{n-1}\ldots v^3_1u_{11}u_{10}u_{20}$.\par
	This does not violate the distance two conditions, since two consecutive vertices are either a vertex and its copy, or two vertices from the same level, which are successively at distance two. This leads to an $L(2,1)$-labeling of $M^2(K_n)$ with span $|M^2(K_n)|-1$. Since the diameter is $2$, then $\lambda(M^2(K_n))=|M^2(K_n)|-1$. From Observation~\ref{obs4.1} and Lemma~\ref{lem2.1}, we get $\lambda(M^t(K_n))=|M^t(K_n)|-1=2^t(n+1)-2$, for any $t\geq 2$.    	
\end{proof}
Since any graph $G$ of order $n\geq 2$ is a subgraph of the complete graph $K_n$, we can conclude that for $t\geq 2$, we have $\lambda(M^t(G))\leq |M^t(G)|-1=2^t(n+1)-2$. This could also be proven using Theorem~\ref{th3.4} by showing that for any graph $G$, the complement of the Mycielski $\overline{M}(G)$ has a perfect $4$-star matching, which means by Theorem~\ref{th3.4} $(a)$ that $\lambda(M^2(G))\leq |M^2(G)|-1$, then the result follows from Observation~\ref{obs4.1} for any $t\geq 2$.
\begin{cor}
	Let $G_1$ and $G_2$ be two graphs of the same order $|G_1|=|G_2|\geq 2$. For any $t\geq 2$, we have $\lambda(M^t(G_1))+2^t\leq \lambda(M^{t+1}(G_2))$. 	
\end{cor}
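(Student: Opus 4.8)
The plan is to sandwich the two quantities between the sharp upper and lower bounds already established for the iterated Mycielski construction, so that the additive gap of exactly $2^t$ is absorbed. Write $n=|G_1|=|G_2|\ge 2$. The whole point is that the standard upper bound for $M^t(G_1)$ and the standard lower bound for $M^{t+1}(G_2)$ happen to differ by precisely $2^t$, which is exactly the term we must account for.

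First I would bound $\lambda(M^t(G_1))$ from above. Since $G_1$ has order $n\ge 2$, it is a subgraph of $K_n$, and by the remark following Theorem~\ref{th4.2} (equivalently, Observation~\ref{obs4.1} combined with Theorem~\ref{th4.2}) we have, for every $t\ge 2$, that $\lambda(M^t(G_1))\le |M^t(G_1)|-1=2^t(n+1)-2$. Next I would bound $\lambda(M^{t+1}(G_2))$ from below by applying the lower bound of Theorem~\ref{th4.1} with the number of iterations equal to $t+1$ (which is at least $2$, as required, since $t\ge 2$); this yields $\lambda(M^{t+1}(G_2))\ge 2^{(t+1)-1}(n+2)-2=2^t(n+2)-2$.

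Finally, using the elementary identity $2^t(n+1)+2^t=2^t(n+2)$, I would chain these two estimates as $\lambda(M^t(G_1))+2^t\le \bigl(2^t(n+1)-2\bigr)+2^t=2^t(n+2)-2\le \lambda(M^{t+1}(G_2))$, which is precisely the claimed inequality. There is no substantive obstacle here; the proof is a direct combination of the two earlier bounds, and the only points that genuinely require care are bookkeeping ones: verifying that both invoked bounds apply at the correct orders (both graphs have order $n\ge 2$) and at the correct iteration counts (the upper bound requires $t\ge 2$, while the lower bound is invoked at level $t+1\ge 2$), and confirming that the arithmetic gap between the two bounds equals the additive term $2^t$ exactly rather than only asymptotically.
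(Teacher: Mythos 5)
Your proposal is correct and follows essentially the same route as the paper: the authors likewise combine the upper bound $\lambda(M^t(G_1))\leq 2^t(n+1)-2$ (from Theorem~\ref{th4.2} and the remark that any graph of order $n$ is a subgraph of $K_n$) with the lower bound $\lambda(M^{t+1}(G_2))\geq 2^t(n+2)-2$ from Theorem~\ref{th4.1}, and the gap of exactly $2^t$ gives the claim. Your bookkeeping about which iteration count each bound is applied at is accurate and matches the paper's argument.
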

\begin{proof}
	For $t\geq 2$, let $G_1$ and $G_2$ be two graphs such that $|G_1|=|G_2|=n\geq 2$. By Theorem~\ref{th4.1} and Theorem~\ref{th4.2}, we have  $\lambda(M^t(G_1))\leq 2^t(n+1)-2$ and   $\lambda(M^{t+1}(G_2))\geq 2^t(n+2)-2$. Hence $\lambda(M^t(G_1))+2^t\leq \lambda(M^{t+1}(G_2)) $. 
\end{proof}
Let us denote $\overline{M^t}(G)$ the complement graph of $M^t(G)$, the close relation between Hamiltonicity and the $L(2,1)$-Labeling allow us to prove the following.
\begin{cor}
	For any graph $G$ and any $t\geq 2$, $\overline{M^t}(G)$ is a Hamiltonian graph.
\end{cor}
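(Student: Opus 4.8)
The plan is to prove the statement by reducing to the densest case and then inducting on $t$, isolating a general amplification step. Since $G$ is a spanning subgraph of $K_n$ with $n=|G|$, Observation~\ref{lem2.2} gives that $M^t(G)$ is a spanning subgraph of $M^t(K_n)$, so on the common vertex set $\overline{M^t}(K_n)$ is a spanning subgraph of $\overline{M^t}(G)$; hence any Hamiltonian cycle of $\overline{M^t}(K_n)$ is one of $\overline{M^t}(G)$, and it suffices to handle $G=K_n$ in the base case. The backbone of the argument is a general step valid for an arbitrary graph $H$: \emph{if $\overline{H}$ is Hamiltonian, then $\overline{M}(H)$ is Hamiltonian.} Applying this with $H=M^{t-1}(G)$ (whose complement is Hamiltonian by the inductive hypothesis once $t-1\ge 2$) propagates Hamiltonicity from $t-1$ to $t$.

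For the general step I would use only the structure of $\overline{M}(H)$ recorded before Theorem~\ref{th3.2}: the copies $W'=\{w'_i\}$ induce a complete graph, the root $u$ is adjacent in $\overline{M}(H)$ to every original $w_i$, each $w_i$ is adjacent to its own copy $w'_i$, and the originals induce $\overline{H}$. Starting from a Hamiltonian cycle of $\overline{H}$, which lives on the originals inside $\overline{M}(H)$, I would enlarge it by two local surgeries on two disjoint cycle edges (available since $|H|\ge 4$): replace one edge $w_cw_d$ by the path $w_c\,u\,w_d$, which is legal because $u$ is adjacent to all originals, and replace a second edge $w_aw_b$ by
\[
w_a,\ w'_a,\ w'_{x_1},\ \ldots,\ w'_{x_{m-2}},\ w'_b,\ w_b,
\]
threading all copies in arbitrary order through the clique $W'$ and closing at the ends via $w_a\sim w'_a$ and $w'_b\sim w_b$. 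Each vertex of $M(H)$ is then visited exactly once, giving the desired Hamiltonian cycle.

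The genuine obstacle is that this step cannot be seeded at $t=1$: the graph $\overline{M}(K_n)$ is \emph{not} Hamiltonian for $n\ge 3$, since each bottom vertex $v^0_i$ has complement-degree $2$ (adjacent only to $u$ and $v^1_i$), which would force $n$ edges at the root. Thus the base case $t=2$ must be built explicitly. Here I would use Lemma~\ref{lem4.1}: the four copies $C_i=\{v^0_i,v^1_i,v^2_i,v^3_i\}$ are pairwise at distance $2$ in $M^2(K_n)$, so each $C_i$ is a clique in the complement, while the outer-level copies (the set $V'$ of the outer Mycielski, containing every $v^2_i,v^3_i$ and $u_{11}$) form an independent set in $M^2(K_n)$, yielding complement-edges such as $v^3_i\sim v^2_{i+1}$ and $v^3_n\sim u_{11}$; moreover $u_{20}$ is adjacent in the complement to all ``old'' vertices, in particular to $v^0_1$ and $u_{10}$. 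These facts let me write the Hamiltonian cycle
\[
u_{20},\,v^0_1,v^1_1,v^2_1,v^3_1,\,v^2_2,v^0_2,v^1_2,v^3_2,\,\ldots,\,v^2_n,v^0_n,v^1_n,v^3_n,\,u_{11},u_{10},\,u_{20},
\]
which traverses each clique $C_i$ internally (any order, as it is complete), jumps $v^3_i\to v^2_{i+1}$ between consecutive cliques, leaves $C_n$ to $u_{11}$, and closes through $u_{11}\,u_{10}\,u_{20}$. Combining this base case with the general step yields the claim for all $t\ge 2$. The only delicate point throughout is verifying that every consecutive pair used is a true non-adjacency in $M^t$, which I would settle uniformly through Lemma~\ref{lem2.4} and Lemma~\ref{lem4.1}.
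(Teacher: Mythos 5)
Your proof is correct. It shares its base case with the paper in spirit --- both exhibit an explicit cyclic ordering of $V(M^2(\cdot))$ in which every two consecutive vertices are either a vertex and one of its copies or two copies from the same level, hence non-adjacent --- but the two arguments diverge from there. The paper runs its explicit $t=2$ construction for an \emph{arbitrary} graph $G$ (so your preliminary reduction to $K_n$ via spanning subgraphs is harmless but not actually needed) and then settles every $t\ge 3$ in one line through the identity $M^t(G)\cong M^2(M^{t-2}(G))$. You instead isolate a general amplification lemma --- if $\overline{H}$ is Hamiltonian and $|H|\ge 4$, then $\overline{M(H)}$ is Hamiltonian --- proved by two disjoint edge surgeries (inserting the root into one cycle edge, threading the clique of copies through another), and induct on $t$; the size condition is met since $|M^{t-1}(G)|=2^{t-1}(n+1)-1\ge 7$ for $t\ge 3$. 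The paper's route is shorter; yours buys a reusable structural fact (the Mycielski construction preserves Hamiltonicity of the complement) together with a sharp explanation, absent from the paper, of why the statement cannot begin at $t=1$: in $\overline{M(K_n)}$ each original vertex has degree $2$, which would force $n$ cycle edges at the root. All the non-adjacency checks you invoke reduce to the same facts the paper uses (Lemma~\ref{lem2.4} and Lemma~\ref{lem4.1}), and they go through.
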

\begin{proof}
	Let $G$ be a graph of order $n$, first we show that $\overline{M^2}(G)$ is Hamiltonian.\par
	Let $\chi_i$ with $2\leq i\leq n$, be a sequence of vertices in $\overline{M^2}(G)$, where $\chi_i=v^2_iv^0_iv^1_i$ if $i$ is odd, and $\chi_i=v^1_iv^0_iv^2_i$ if $i$ is even. Take the vertices of $\overline{M^2}(G)$ in the following  order, $v^0_1v^1_1\chi_2\chi_3\ldots \chi_nv^3_nv^3_{n-1}\ldots v^3_1v^2_1u_{11}u_{10}u_{20}v^0_1$.
	\par Notice that this is similar to the  order proposed in Theorem~\ref{th4.2} for labeling $M^2(K_n)$. Since every two consecutive vertices are non-adjacent in $M^2(G)$, then the vertices of $\overline{M^2}(G)$ taken in the above order form a Hamiltonian cycle. Thus, for any graph $G$ we have $\overline{M^2}(G)$ is Hamiltonian. For $t\geq 2$, since $M^t(G)\cong M^2(M^{t-2}(G))$, then $\overline{M^t}(G)$ is a Hamiltonian graph for any $t\geq 2$.
\end{proof}
Next we characterize the graphs with $\lambda(M^t(G))=2^t(n+1)-2$, for $t\geq 2$.
\begin{theorem}\label{th4.4}
	Let $G$ be a graph of order $n\geq 2$. For $t\geq 2$, we have
	$\lambda(M^t(G))= 2^t(n+1)-2$ if and only if $G \cong K_n$ or $diam(G)=2$. 	
\end{theorem}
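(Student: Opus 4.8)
The plan is to recast the statement in terms of the diameter of $M^t(G)$. For $n\ge 2$ a connected graph satisfies $diam(G)=1$ exactly when $G\cong K_n$, so the hypothesis ``$G\cong K_n$ or $diam(G)=2$'' means precisely $diam(G)\le 2$, which by Lemma~\ref{lem2.5} is equivalent to $diam(M^t(G))=2$; a disconnected $G$ (or one with isolated vertices) is neither $K_n$ nor of diameter $2$ and will fall under the contrapositive. Thus the target reduces to showing, for $t\ge 2$, that $\lambda(M^t(G))=|M^t(G)|-1$ holds if and only if $diam(M^t(G))=2$.

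For the ``if'' direction I would argue directly in the two stated cases. If $G\cong K_n$ the value is exactly Theorem~\ref{th4.2}. If $diam(G)=2$, then $G$ has no isolated vertices and Lemma~\ref{lem2.5} yields $diam(M^t(G))=2$; in a diameter-two graph all vertices lie pairwise within distance $2$ and so must receive distinct labels, giving $\lambda(M^t(G))\ge |M^t(G)|-1=2^t(n+1)-2$. Together with the upper bound $\lambda(M^t(G))\le |M^t(G)|-1$ already recorded after Theorem~\ref{th4.2} (via $G\subseteq K_n$ and Lemma~\ref{lem2.7}), equality follows.

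For the ``only if'' direction I would prove the contrapositive in the sharper form: if $diam(M^t(G))\ge 3$ then $\lambda(M^t(G))\le |M^t(G)|-2<2^t(n+1)-2$. The key input is the corollary proved just above, that $\overline{M^t(G)}$ is Hamiltonian for every $t\ge 2$, together with the existence of a pair $x,y$ with $d_{M^t(G)}(x,y)\ge 3$; when $G$ has no isolated vertices such a pair exists because $diam(M^t(G))\ge 3$ (concretely $d_{M^t(G)}(v_a^0,v_b^0)\ge 3$ whenever $d_G(v_a,v_b)\ge 3$, by Lemma~\ref{lem2.4} applied inductively). Since two vertices at distance at least $3$ may share a label, the goal is an $L(2,1)$-labeling using $\{0,1,\ldots,|M^t(G)|-2\}$ with one value repeated on such a pair. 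The mechanism is a \emph{fold}: a Hamiltonian path $w_1w_2\cdots w_m$ of $\overline{M^t(G)}$ gives the span-$(m-1)$ labeling $f(w_i)=i-1$ (consecutive path vertices are non-adjacent in $M^t(G)$, and every pair adjacent in $M^t(G)$ is non-consecutive on the path, hence gets labels differing by at least $2$); if moreover $d_{M^t(G)}(w_1,w_m)\ge 3$ and $w_m$ is non-adjacent in $M^t(G)$ to $w_2$, then resetting $f(w_m):=f(w_1)=0$ stays admissible and lowers the span to $m-2=|M^t(G)|-2$, since the only vertices whose labels lie within $1$ of the reused value $0$ are $w_1$ (at distance $\ge 3$ from $w_m$) and $w_2$ (non-adjacent to $w_m$).

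It then remains to route the Hamiltonian path of $\overline{M^t(G)}$ so that its endpoints form a distance-$\ge 3$ pair meeting the neighbour condition. If $G$ has an isolated vertex $c$, this is immediate: $c$ persists as an isolated vertex of $M^t(G)$, hence is universal in $\overline{M^t(G)}$ and lies at distance $\infty$ from every vertex, so cutting a Hamiltonian cycle of $\overline{M^t(G)}$ at $c$ gives a path whose endpoint $c$ is far from, and non-adjacent to, everything. The hard part, which I expect to be the main obstacle, is the case with no isolated vertices: the explicit Hamiltonian cycle of $\overline{M^t(G)}$ from the preceding corollary is built so that every pair of consecutive vertices is at distance exactly $2$ in $M^t(G)$, so it contains no ``far'' edge to fold upon, and one must surgically reroute it near a chosen far pair $v_a^0,v_b^0$ so that this pair becomes the two endpoints while all remaining consecutive pairs stay non-adjacent in $M^t(G)$ and the endpoint neighbour condition is arranged.
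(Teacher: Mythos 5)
Your reduction and your ``if'' direction are correct and coincide with the paper's: $K_n$ is handled by Theorem~\ref{th4.2}, and for $diam(G)=2$ the upper bound $\lambda(M^t(G))\leq 2^t(n+1)-2$ combines with $diam(M^t(G))=2$ (Lemma~\ref{lem2.5}) to force equality. Your ``only if'' skeleton is also sound: the contrapositive, the existence of a pair $v_a^0,v_b^0$ with $d_{M^t}(v_a^0,v_b^0)\geq 3$ when $diam(G)\geq 3$, and the ``fold'' mechanism (reusing label $0$ on the two endpoints of a Hamiltonian path of $\overline{M^t(G)}$ subject to the distance-$\geq 3$ and neighbour conditions) are all legitimate.

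However, there is a genuine gap, and you name it yourself: you never construct the Hamiltonian path of $\overline{M^t(G)}$ whose endpoints are the far pair and whose second vertex is non-adjacent to the last. That construction is not a routine detail --- it is the entire content of the converse direction, since the ``canonical'' Hamiltonian cycle of $\overline{M^t(G)}$ consists exclusively of distance-$2$ pairs and admits no fold. The paper resolves exactly this point by writing down an explicit vertex ordering of $M^2(G)$, namely $v^0_1v^1_1\chi_2\cdots\chi_{n-1}v^3_{n-1}\cdots v^3_1v^2_1$ followed by $v^0_n,v^1_n,v^2_n,v^3_n,u_{11},u_{10},u_{20}$, in which the label $4n-5$ is reused on the pair $v^2_1, v^0_n$ (valid because $d_{M^2}(v^2_1,v^0_n)\geq 3$ and $d_{M^2}(v^2_1,v^1_n)=3$ by Lemma~\ref{lem2.4}); note the reuse sits in the middle of the sequence rather than at the two ends of a path, which slightly simplifies the adjacency checking. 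A second observation that would have lightened your burden: the paper only performs this construction for $t=2$, obtaining $\lambda(M^2(G))\leq 4(n+1)-3$, and then gets all $t\geq 3$ for free from the recursive upper bound $\lambda(M^t(G))\leq (2^{t-2}-1)(|M^2(G)|+1)+\lambda(M^2(G))=2^t(n+1)-3$; you do not need to carry out the Hamiltonian-path surgery in $M^t(G)$ for general $t$. As written, your proposal establishes the easy direction but leaves the converse unproved.
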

\begin{proof}
	For $t\geq2$, if $G\cong K_n$ by Theorem~\ref{th4.2} we have $\lambda(M^t(G))=2^t(n+1)-2$. If $diam(G)=2$, from Theorem~\ref{th4.2} we have $\lambda(M^t(G))\leq 2^t(n+1)-2$. By Lemma~\ref{lem2.5}, $diam(M^t(G))=2$, the vertices must be assigned distinct labels, hence  $\lambda(M^t(G))= 2^t(n+1)-2$.\par
	The converse, suppose that $G$ is a graph of order $n\geq 2$, with  $diam(G)\geq 3$. So there are at least two vertices at distance greater or equal to $3$, one from another. Without loss of generality, we suppose that $d_G(v^0_1,v^0_n)\geq 3$. For $t=2$, let $\chi_i$ with $2\leq i\leq n-1$, be a sequence of vertices in $M^2(G)$, where $\chi_i=v^2_iv^0_iv^1_i$ if $i$ is odd, and $\chi_i=v^1_iv^0_iv^2_i$ if $i$ is even. The labeling $f$  assigns consecutive labels to the vertices beginning with $0$ in the following order, $v^0_1v^1_1\chi_2\chi_3\ldots \chi_{n-1}v^3_{n-1}v^3_{n-2}\ldots v^3_1v^2_1$. \par This is similar to the order in Theorem~\ref{th4.2}. The maximum label assigned is $f(v^2_1)=4n-5$.   We have $d_G(v^0_1,v^0_n)\geq 3$, so by Lemma~\ref{lem2.4} we have $d_{M^2}(v^2_1,v^0_n)\geq 3$, and $d_{M^2}(v^2_1,v^1_n)=3$. We label  $f(v^0_n)=f(v^2_1)=4n-5$, $f(v^1_n)=4n-4$, $f(v^2_n)=4n-3$, $f(v^3_n)=4n-2$, $f(u_{11})=4n-1$, $f(u_{10})=4n$, $f(u_{20})=4n+1$. This is a valid $L(2,1)$-Labeling of $M^2(G)$ with span $4n+1$. Hence $\lambda(M^2(G))\leq 4n+1=4(n+1)-3$. From the upper bound of Theorem ~\ref{th3.1} and Theorem~\ref{th4.1}, for all $t\geq 3$, we have  $\lambda(M^t(G))\leq (2^{t-2}-1)(|M^2(G)|+1)+\lambda(M^2(G))$, since $|M^2(G)|=4(n+1)-1$, it follows that for all $t\geq 2$, $\lambda(M^t(G))\leq 2^t(n+1)-3$.~\end{proof}
\subsection{Graphs with $\lambda(M^t(G))=2^{t-1}(n+2)-2$ }
\begin{lemma}\label{lem4.2}
	Let $t\geq 2$ and $1\leq i,j\leq n$, for $1\leq k\leq 2^{t-1}-1 $, we have $d_{M^t}(v_i^{k},v_j^{2^{t-1}+k})=2$, and for $2^{t-1}+1\leq k \leq 2^t-1$, we have $d_{M^t}(v_i^{k},v_j^{2^{t-1}-1})=2$.
\end{lemma}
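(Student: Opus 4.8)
The plan is to peel off the last Mycielski step: write $H=M^{t-1}(G)$ so that $M^t(G)=M(H)$, and read the three vertex families in the statement through Definition~\ref{def1}. Under this lens $v_j^{2^{t-1}+k}$ is the copy, in the last step, of the vertex $v_j^{k}$ of $H$, whereas $v_i^{k}$ (for $k\le 2^{t-1}-1$) and $v_j^{2^{t-1}-1}$ are vertices of $H$, i.e. \emph{originals} of $M(H)$. In both identities we thus measure the distance between an original and a copy. Now the new root $u_{t,0}$ is adjacent only to copies, and two copies are never adjacent in a Mycielski graph, so any common neighbour of an original and a copy must be an \emph{original} $w$ of $M(H)$; and by Definition~\ref{def1} such a $w$ is adjacent to an original $v^{a}$ iff $wv^{a}\in E(H)$ and to the copy of $v^{b}$ iff $wv^{b}\in E(H)$. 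Hence each statement reduces to exhibiting, inside $H=M^{t-1}(G)$, one vertex adjacent to the two relevant $v$-vertices, together with a check that the endpoints are non-adjacent.

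Next I would record the structural fact driving everything. For $k\ge 1$ let $s=s(k)$ be the \emph{creation step} of the index $k$, i.e. the integer with $2^{s-1}\le k\le 2^{s}-1$; then $v_i^{k}$ is a copy produced at the $s$-th iteration and is therefore adjacent to the root $u_{s,0}$ in $M^{s}(G)$, an adjacency that survives into $M^{t-1}(G)$ because iterating $M$ only adds vertices and never deletes or creates edges among old ones (so $M^{s}(G)$ is an induced subgraph of $M^{t-1}(G)$). Two consequences of the same bookkeeping are: (i) $u_{s(k),0}$ is adjacent to \emph{every} $v_i^{k}$, independently of $i$; and (ii) copies created at one and the same step form an independent set, so $v_i^{k}v_j^{k}\notin E(M^{t-1}(G))$ whenever $k\ge 1$.

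For the first identity (range $1\le k\le 2^{t-1}-1$) these two facts finish the job: by (i) the root $u_{s(k),0}$ is a common neighbour of $v_i^{k}$ and $v_j^{k}$ in $H$, hence a common neighbour of the original $v_i^{k}$ and the copy $v_j^{2^{t-1}+k}$ in $M^t(G)$, giving distance $\le 2$; and since $v_i^{k}$ is adjacent to that copy only if $v_i^{k}v_j^{k}\in E(H)$, non-adjacency is exactly (ii). I expect the genuinely delicate point to be the second identity (range $2^{t-1}+1\le k\le 2^{t}-1$), where I must pair a step-$t$ copy $v_i^{k}$ with the \emph{last} copy $v_j^{2^{t-1}-1}$ of $H$: here the naive witnesses $u_{t,0}$ (adjacent only to copies) and $u_{t-1,0}$ (adjacent to $v_i^{k'}$ only when $k'=k-2^{t-1}$ is itself a step-$(t-1)$ index) both fail in general.

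The remedy is to compute the full root-neighbourhood of the last copy by unrolling the recursion $N_{M^{s}(G)}\!\bigl(v_j^{2^{s}-1}\bigr)=N_{M^{s-1}(G)}\!\bigl(v_j^{2^{s-1}-1}\bigr)\cup\{u_{s,0}\}$, which yields
$$N_{M^{t-1}(G)}\!\bigl(v_j^{2^{t-1}-1}\bigr)=N_{G}\!\bigl(v_j^{0}\bigr)\cup\{u_{1,0},u_{2,0},\ldots,u_{t-1,0}\}.$$
Thus the last copy sees \emph{all} lower roots, so writing $k'=k-2^{t-1}\ge 1$ and $s'=s(k')\le t-1$, the root $u_{s',0}$ is adjacent both to $v_i^{k'}$ (by (i)) and to $v_j^{2^{t-1}-1}$, hence is a common neighbour of $v_i^{k'}$ and $v_j^{2^{t-1}-1}$ in $H$ and produces distance $\le 2$ in $M^t(G)$; non-adjacency holds because $v_i^{k'}$ is a $v$-vertex with $k'\ge 1$ while $N_{M^{t-1}(G)}(v_j^{2^{t-1}-1})$ consists only of roots and level-$0$ vertices $v_l^{0}$. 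Combining the two reductions with the non-adjacency checks yields $d_{M^t}=2$ in both ranges.
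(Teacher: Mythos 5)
Your proof is correct, and it reaches the conclusion by a somewhat different mechanism than the paper. The paper's proof works entirely through Lemma~\ref{lem2.4}: for the first identity it notes that $d_{M^{t-1}}(v_i^k,v_j^k)=2$ and applies the formula $d_{M}(v_i,v'_j)=\min(3,d(v_i,v_j))$ once; for the second identity it propagates the distance-$2$ relation between $v_i^{k'}$ and the top-level copy up the tower by an induction on $t$, again invoking $\min(3,\cdot)$ at each level. You instead argue purely with adjacencies: you identify the creation-step root $u_{s(k),0}$ as an explicit common neighbour that works uniformly in $i$ and $j$, and for the second identity you replace the paper's induction on distances by an unrolling of the neighbourhood recursion $N_{M^{s}(G)}(v_j^{2^{s}-1})=N_{M^{s-1}(G)}(v_j^{2^{s-1}-1})\cup\{u_{s,0}\}$, which shows the last copy sees every lower root and also makes the non-adjacency check immediate (its neighbourhood contains only roots and level-$0$ vertices). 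What your route buys is self-containedness and explicit witnesses --- it does not rely on the $\min(3,d)$ formula, and it sidesteps the non-isolatedness hypothesis of Lemma~\ref{lem2.4} since the vertices $v^k$ with $k\geq 1$ are always adjacent to a root; what the paper's route buys is brevity, since Lemma~\ref{lem2.4} is already in place. Both arguments ultimately rest on the same structural reading of Definition~\ref{def1}, and your non-adjacency verifications (copies of a common step form an independent set; the last copy's neighbourhood avoids all $v^{k'}$ with $k'\geq 1$) are exactly the checks needed to upgrade ``distance at most $2$'' to equality.
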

\begin{proof}
	For $1\leq k\leq 2^{t-1}-1 $, we have $v_j^{2^{t-1}+k}$ is the copy of $v_j^k$ in $M^t(G)$. Since $d_{M^{t-1}}(v_i^k,v_j^k)=2$, by Lemma~\ref{lem2.4} we have $d_{M^t}(v_i^{k},v_j^{2^{t-1}+k})=2$.\par
	For $t\geq 2$, $v^3_i$ is the copy of $v^1_i$. So by Lemma~\ref{lem2.4} $d_{M^2}(v_i^3,v_j^1)=2$. Since $d_{M^2}(v_i^3,v_j^2)=2$,  by using Lemma~\ref{lem2.4} inductively,  we can show that  for $2^{t-1}+1\leq k \leq 2^t-1$, we have $d_{M^t}(v_i^{k},v_j^{2^{t-1}-1})=2$.   
\end{proof} 
\begin{lemma}\label{lem4.3}
	If $v^0_i$ and $v^0_j$ are not isolated vertices, for $0\leq k\leq 2^{t-1}-1 $, we have $d_{M^t}(v_i^{k},v_j^{2^t-k-1})=min(3,d_G(v_i^0,v_j^0)) $.
\end{lemma}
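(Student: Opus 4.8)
The plan is to induct on $t$, exploiting $M^t(G)=M(M^{t-1}(G))$ and the single-step distance formula of Lemma~\ref{lem2.4}. The key re-indexing is $2^t-k-1=2^{t-1}+(2^{t-1}-k-1)$: when $0\le k\le 2^{t-1}-1$ we also have $0\le 2^{t-1}-k-1\le 2^{t-1}-1$, so $v_j^{2^t-k-1}$ is exactly the copy, in the outermost Mycielski step, of the vertex $v_j^{2^{t-1}-k-1}$ of $M^{t-1}(G)$, while $v_i^{k}$ (with $k\le 2^{t-1}-1$) is an inherited vertex of $M^{t-1}(G)$ sitting inside $M^t(G)$. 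Treating $M^t(G)$ as the Mycielski of the base graph $M^{t-1}(G)$ and applying Lemma~\ref{lem2.4} to this (vertex, copy) pair yields
$$d_{M^t}(v_i^{k},v_j^{2^t-k-1})=\min\bigl(3,\,d_{M^{t-1}}(v_i^{k},v_j^{2^{t-1}-k-1})\bigr).$$

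To use Lemma~\ref{lem2.4} legitimately I would first note that non-isolation propagates upward: by Definition~\ref{def1}, if $v_i^0$ is non-isolated in $G$ then both a vertex and its copy gain positive degree at every Mycielski step, so every $v_i^{k}$ is non-isolated in each intermediate $M^{s}(G)$. The base case $t=1$ is then immediate, since there $k=0$ forces $2^t-k-1=1$, and $v_j^{1}=v'_j$, so Lemma~\ref{lem2.4} gives $d_M(v_i,v'_j)=\min(3,d_G(v_i^0,v_j^0))$ directly.

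For the inductive step, the descended pair $v_i^{k},\,v_j^{2^{t-1}-k-1}$ has indices summing to $2^{t-1}-1$, which is exactly the configuration governed by the lemma at level $t-1$. The one wrinkle --- and the only place that needs care --- is that the inductive statement constrains the \emph{first} index to lie in $[0,2^{t-2}-1]$, whereas here $k$ may reach $2^{t-1}-1$. I resolve this by symmetry of graph distance: if $k\le 2^{t-2}-1$ I apply the hypothesis as is, and if $k\ge 2^{t-2}$ then $2^{t-1}-k-1\le 2^{t-2}-1$, so I swap the endpoints and apply the hypothesis with $v_j$ first, using $d_G(v_j^0,v_i^0)=d_G(v_i^0,v_j^0)$. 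In both cases $d_{M^{t-1}}(v_i^{k},v_j^{2^{t-1}-k-1})=\min(3,d_G(v_i^0,v_j^0))$, and substituting into the displayed identity collapses $\min\bigl(3,\min(3,\cdot)\bigr)$ to $\min(3,\cdot)$, closing the induction. I expect the main obstacle to be purely bookkeeping: correctly tracking the copy relationship through the re-indexing $2^t-k-1=2^{t-1}+(2^{t-1}-k-1)$ and handling the index range via symmetry, so that Lemma~\ref{lem2.4} and the inductive hypothesis are each applied to a genuinely valid (vertex, copy) or complementary-index configuration; no metric estimation is involved.
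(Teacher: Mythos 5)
Your proposal is correct and follows essentially the same route as the paper's proof: both use the re-indexing $2^t-k-1=2^{t-1}+(2^{t-1}-k-1)$ to recognize a (vertex, copy) pair, apply Lemma~\ref{lem2.4} to peel off one Mycielski layer, and handle the out-of-range first index via the symmetry $k=2^{t-1}-m-1$ before recursing down to $G$. Your version is slightly more explicit about the base case, the propagation of non-isolation, and the collapse of the nested minima, but the argument is the same.
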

\begin{proof}
	We have $v_i^{2^t-k-1}$ is the copy of $v_i^{2^{t-1}-k-1}$ in $M^t(G)$, by Lemma~\ref{lem2.4}, we have $d_{M^t}(v_i^k,v_j^{2^t-k-1})=min(3,d_{M^{t-1}}(v_i^k,v_j^{2^{t-1}-k-1})) $. If $0\leq k\leq 2^{t-2}-1$, we have $d_{M^{t-1}}(v_i^k,v_j^{2^{t-1}-k-1})=min(3,d_{M^{t-2}}(v_i^k,v_j^{2^{t-2}-k-1})) $. Otherwise, if $2^{t-2}\leq k\leq 2^{t-1}-1$, by symmetry $k=2^{t-1}-m-1$ where $0\leq m\leq 2^{t-2}-1$, so  $d_{M^{t-1}}(v_i^k,v_j^{2^{t-1}-k-1})=d_{M^{t-1}}(v_i^{2^{t-1}-m-1},v_j^m)=min(3,d_{M^{t-2}}(v_i^{2^{t-2}-m-1},v_j^m)) $. By recursively using Lemma~\ref{lem2.4}, we get $d_{M^t}(v_i^{k},v_j^{2^t-k-1})=min(3,d_G(v_i^0,v_j^0))$.~ \end{proof} 
In the case where $v^0_i$ or $v^0_j$ are isolated vertices,  for $1\leq k\leq 2^{t-1}-1$, we have $d_{M^t}(v_i^{k},v_j^{2^t-k-1})=3$.\par 
The direct product $G\times K_2$, called the \textit{canonical double cover} (or \textit{Kronecker double cover}) is a bipartite graph with two partition sets $X=V\times \{x\}$ and $Y=V\times \{y\}$, where $(v_i,x)(v_j,y) \in E(G\times K_2)$ if and only if $v_iv_j\in E(G)$.\par
From Lemma~\ref{lem4.3}, $v_i^{2^{t-1}-1}v_j^{2^{t-1}}\in E(M^t(G))$ if and only if $v_i^0v_j^0\in E(G)$. Since two copies of the same vertex or copies from the same level are non-adjacent, we have 
\begin{obs} \label{obs1}
	For $t\geq 2$, let $S=\{v_i^{2^{t-1}-1},v_i^{2^{t-1}} :1\leq i\leq n \}$. In $M^t(G)$, the subgraph induced by the vertices in $S$ is isomorphic to $G\times K_2$.
\end{obs}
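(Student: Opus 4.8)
The plan is to exhibit an explicit vertex bijection between $G\times K_2$ and the subgraph of $M^t(G)$ induced by $S$, and then check that it preserves adjacency and non-adjacency. Writing the partition classes of $G\times K_2$ as $X=V\times\{x\}$ and $Y=V\times\{y\}$, I would set $\phi(v_i,x)=v_i^{2^{t-1}-1}$ and $\phi(v_i,y)=v_i^{2^{t-1}}$. Since distinct pairs $(i,\varepsilon)$ yield distinct vertices of $S$, this is immediately a bijection, and all the work lies in matching edges.

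First I would verify that each half of $S$ is independent in $M^t(G)$. Viewing $M^t(G)=M(M^{t-1}(G))$, the vertices $v_i^{2^{t-1}},\dots,v_i^{2^t-1}$ form exactly the copy set added in this last step, which by Definition~\ref{def1} carries no internal edges; hence no two of the $v_i^{2^{t-1}}$ are adjacent. Similarly, each $v_i^{2^{t-1}-1}$ is the top-indexed copy produced at the previous step, i.e.\ it lies in the independent copy set of $M^{t-1}(G)$, and because the Mycielski construction keeps $M^{t-1}(G)$ as an induced subgraph of $M^t(G)$, these vertices stay pairwise non-adjacent. This reproduces the independence of $X$ and of $Y$ in $G\times K_2$.

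Next I would treat the cross pairs, which carry the real content. Applying Lemma~\ref{lem4.3} with $k=2^{t-1}-1$ (so that $2^t-k-1=2^{t-1}$) gives $d_{M^t}(v_i^{2^{t-1}-1},v_j^{2^{t-1}})=\min(3,d_G(v_i^0,v_j^0))$ for non-isolated $v_i^0,v_j^0$; the isolated case yields distance $3$, and for $i=j$ two copies of one vertex are at distance $2$ by Lemma~\ref{lem4.1}. In all situations $v_i^{2^{t-1}-1}v_j^{2^{t-1}}\in E(M^t(G))$ exactly when $v_i^0v_j^0\in E(G)$, matching the rule $(v_i,x)(v_j,y)\in E(G\times K_2)\iff v_iv_j\in E(G)$. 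Since the three cases above exhaust all pairs inside $S$, $\phi$ is edge- and non-edge-preserving in both directions, giving $M^t(G)[S]\cong G\times K_2$.

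Because every distance needed is already furnished by Lemmas~\ref{lem4.1} and~\ref{lem4.3}, I do not anticipate a substantive obstacle; the only delicate point is the superscript bookkeeping—confirming that $v_i^{2^{t-1}-1}$ and $v_i^{2^{t-1}}$ are respectively the last copy created by $M^{t-1}$ and the first copy created by $M^t$, so their independence is read directly off a single Mycielski step rather than requiring a separate induction.
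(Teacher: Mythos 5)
Your proof is correct and takes essentially the same route as the paper: the paper's entire justification is the remark that Lemma~\ref{lem4.3} applied with $k=2^{t-1}-1$ gives $v_i^{2^{t-1}-1}v_j^{2^{t-1}}\in E(M^t(G))$ if and only if $v_i^0v_j^0\in E(G)$, combined with the fact that copies from the same level and copies of the same vertex are non-adjacent. Your write-up simply makes explicit what the paper leaves implicit, namely the independence of each of the two levels, the $i=j$ case, and the isolated-vertex case.
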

A \textit{matching} in a graph $G$ is a collection of vertex-disjoint edges in $G$, a \textit{perfect matching} is a matching that covers all the vertices of $G$. The following theorem known as \textit{the Marriage Theorem}, gives a criterion for any bipartite graph $G=(X,Y)$ to have a perfect matching.
\begin{theorem}[(The Marriage Theorem)]\label{th4.5}
	Let $G=(X,Y)$ be a bipartite graph, then $G$ has a perfect matching  if and only if $|X|=|Y|$ and for any $S\subseteq X$, $|N_G(S)|\geq |S|$.
\end{theorem}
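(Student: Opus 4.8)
The plan is to prove the two implications separately, treating necessity as immediate and devoting the real work to the converse by induction on $|X|$. For necessity, suppose $M$ is a perfect matching of $G=(X,Y)$. Then $M$ saturates every vertex, so $|X|=|Y|$; and for any $S\subseteq X$ the matching sends the vertices of $S$ to $|S|$ pairwise distinct vertices of $Y$, each lying in $N_G(S)$, whence $|N_G(S)|\geq |S|$.

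For sufficiency I would induct on $|X|$, assuming throughout that $|X|=|Y|$ and that $|N_G(S)|\geq |S|$ holds for every $S\subseteq X$. The base case $|X|=1$ is clear, since the single vertex of $X$ has a neighbour and $|Y|=1$. For the inductive step I distinguish two cases. In the \emph{surplus case}, every nonempty proper subset $S\subsetneq X$ satisfies $|N_G(S)|\geq |S|+1$. Here I pick an arbitrary edge $xy$ with $x\in X$ and $y\in Y$, delete both endpoints to form $G'$, and observe that for every $S\subseteq X\setminus\{x\}$ one has $|N_{G'}(S)|\geq |N_G(S)|-1\geq |S|$. Thus $G'$ still satisfies Hall's condition and, by the inductive hypothesis, admits a perfect matching; adjoining the edge $xy$ completes a perfect matching of $G$.

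In the remaining \emph{tight case}, some nonempty proper subset $S_0\subsetneq X$ satisfies $|N_G(S_0)|=|S_0|$. I would split $G$ into the subgraph $G_1$ induced by $(S_0,N_G(S_0))$ and the subgraph $G_2$ induced by $(X\setminus S_0,\, Y\setminus N_G(S_0))$. The graph $G_1$ inherits Hall's condition directly and has equal sides, so induction yields a perfect matching $M_1$ of $G_1$. For $G_2$ the point is to re-verify Hall's condition: for $T\subseteq X\setminus S_0$, the neighbours of $T$ lying in $Y\setminus N_G(S_0)$ form exactly the set $N_G(S_0\cup T)\setminus N_G(S_0)$, and applying the hypothesis to $S_0\cup T$ gives $|N_G(S_0\cup T)|\geq |S_0|+|T|$, so this set has size at least $|T|$. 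The side balance $|X\setminus S_0|=|Y\setminus N_G(S_0)|$ follows from $|X|=|Y|$ and $|N_G(S_0)|=|S_0|$, so induction produces a perfect matching $M_2$ of $G_2$, and $M_1\cup M_2$ is the desired perfect matching of $G$.

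The delicate step is the tight case: one must be certain that the complementary subgraph $G_2$ again satisfies Hall's condition, and this rests on the identity $N_G(S_0\cup T)=N_G(S_0)\cup N_G(T)$ together with the care to count only those neighbours of $T$ that fall outside $N_G(S_0)$. Everything else is routine bookkeeping, and the two cases are exhaustive since either all proper subsets carry strict surplus or some proper subset is tight.
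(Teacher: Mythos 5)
Your proof is correct: the necessity direction is immediate, and your induction on $|X|$ with the surplus/tight dichotomy is the classical Halmos--Vaughan argument for Hall's theorem, with the one delicate point (re-verifying Hall's condition for $G_2$ via $|N_G(S_0\cup T)|\geq |S_0|+|T|$ and $|N_G(S_0)|=|S_0|$) handled properly. Note, however, that the paper does not prove this statement at all --- the Marriage Theorem is invoked there as a known classical result (used only as a tool in the proof of Theorem~\ref{th4.7}) --- so there is no in-paper argument to compare yours against; your self-contained proof is simply the standard one and is sound.
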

A \textit{$2$-matching}  of a graph $G$  is an assignment of weights $0$, $1$, or $2$ to the edges of $G$, such that the sum of weights of edges incident to any vertex in $G$ is less or equal to $2$ (see Chapter 6. in \cite{lovas}).  A $2$-matching of a graph $G$ can be seen as components with degree vertex at most $2$. The sum of weights in a $2$-matching is called the size. The maximum size of a $2$-matching is denoted by $\nu_2(G)$, which can be computed in polynomial time \cite{tutte}. A \textit{perfect $2$-matching} is a $2$-matching where the sum of weights incident to any vertex in $G$ is exactly $2$. W. Tutte in \cite{tutte}, provides a characterization for the existence of perfect $2$-matching of a graph.
\begin{theorem}\label{th4.6}\cite{tutte}
	A graph $G$ has a perfect $2$-matching  if and only if for any independent set $S\subseteq V$, $|N_G(S)|\geq |S|$. 
\end{theorem}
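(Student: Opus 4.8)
The plan is to reduce the existence of a perfect $2$-matching in $G$ to the existence of an ordinary perfect matching in the canonical double cover $G\times K_2$, and then to invoke the Marriage Theorem (Theorem~\ref{th4.5}). The central step is the structural equivalence: $G$ has a perfect $2$-matching if and only if $G\times K_2$ has a perfect matching. For the forward direction I would use that a perfect $2$-matching is exactly a spanning subgraph $F$ whose components are single edges (the weight-$2$ edges) and cycles (the weight-$1$ edges, since at each vertex the total weight is $2$, so its weight-$1$ degree is $0$ or $2$). From such an $F$ one builds a perfect matching of $G\times K_2$ by taking $(v_i,x)(v_j,y)$ and $(v_j,x)(v_i,y)$ for every single edge $v_iv_j$, and $(v_{i_t},x)(v_{i_{t+1}},y)$ around each cycle $v_{i_1}\cdots v_{i_k}$. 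Conversely, a perfect matching of $G\times K_2$ determines a fixed-point-free permutation $\sigma$ of $\{1,\dots,n\}$ with $v_iv_{\sigma(i)}\in E(G)$ (fixed points are impossible because $G$ is loopless); the $2$-cycles of $\sigma$ give weight-$2$ edges and the longer cycles give weight-$1$ cycles, which together form a perfect $2$-matching.

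Once this equivalence is established, the Marriage Theorem applied to $G\times K_2$ (where $|X|=|Y|=n$ and $N_{G\times K_2}(A\times\{x\})=N_G(A)\times\{y\}$) shows that $G\times K_2$ has a perfect matching if and only if $|N_G(A)|\ge|A|$ for every $A\subseteq V$. It then remains only to show that this Hall-type condition over all subsets is equivalent to the stated condition over independent sets; one direction is immediate, so the work is to deduce the all-subsets condition from the independent-set condition.

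For that deduction, given an arbitrary $A\subseteq V$, I would set $I=A\setminus N_G(A)$. This set $I$ is independent, since any edge with both ends in $I$ would place one end in $N_G(A)$. The key observation is that $N_G(I)\cap A=\emptyset$: if some $a\in A$ were adjacent to a vertex $i\in I$, then $i$ would have the neighbour $a\in A$ and hence lie in $N_G(A)$, contradicting $i\in I$. Since $A=I\sqcup(A\cap N_G(A))$ and the two sets $N_G(I)$ and $A\cap N_G(A)$ are disjoint subsets of $N_G(A)$, the independent-set hypothesis applied to $I$ yields $|N_G(A)|\ge|N_G(I)|+|A\cap N_G(A)|\ge|I|+|A\cap N_G(A)|=|A|$, which is the desired inequality.

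I expect the structural equivalence of the first step to be the main obstacle: one must check carefully that the cycle decomposition of $\sigma$ matches the single-edge/cycle decomposition of a perfect $2$-matching, and that no edge of $G$ receives weight from two distinct components (which holds because each vertex lies in exactly one cycle of $\sigma$). The reduction to independent sets, by contrast, is short once one notices that $N_G(A\setminus N_G(A))$ avoids $A$, after which the inequality follows from a single disjointness count.
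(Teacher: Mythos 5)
Your proof is correct, but note that the paper does not prove this statement at all: Theorem~\ref{th4.6} is quoted from Tutte's 1953 paper as a black box, so there is no internal proof to compare against. Your argument is a valid, self-contained derivation, and all three of its steps check out: the bijection between perfect $2$-matchings of $G$ and perfect matchings of $G\times K_2$ (via the permutation $\sigma$, whose $2$-cycles give weight-$2$ edges and whose longer cycles give weight-$1$ cycles, with the total weight at $v_i$ equal to $2$ because each vertex has exactly one $\sigma$-image and one $\sigma$-preimage); the translation of Hall's condition on $G\times K_2$ into $|N_G(A)|\geq|A|$ for all $A\subseteq V$; and the reduction to independent sets via $I=A\setminus N_G(A)$, where the disjointness of $N_G(I)$ and $A\cap N_G(A)$ inside $N_G(A)$ yields $|N_G(A)|\geq|I|+|A\cap N_G(A)|=|A|$. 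It is worth observing that your route inverts the logical flow of the remark the authors make immediately after the theorem: they deduce the equivalence ``$G$ has a perfect $2$-matching iff $G\times K_2$ has a perfect matching'' as a consequence of Theorems~\ref{th4.5} and~\ref{th4.6}, whereas you prove that equivalence directly from the structure of a perfect $2$-matching and then combine it with Theorem~\ref{th4.5} to obtain Theorem~\ref{th4.6}. This buys you a proof of the cited result from first principles, at the cost of having to verify the combinatorial correspondence with $\sigma$ carefully --- which you do, including the point that a $k$-cycle of $\sigma$ with $k\geq 3$ cannot contain both $\sigma(i)=j$ and $\sigma(j)=i$, so no edge is overloaded.
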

A perfect $2$-matching can be seen as a spanning subgraph in which each component is a single edge $K_2$ or a cycle, since every even cycle has a perfect matching, a graph with a perfect $2$-matching has a spanning subgraph in which each component is a single edge or an odd cycle. It is easy to see from the two preceding Theorem~\ref{th4.5} and Theorem~\ref{th4.6}, that the existence of perfect $2$-matching in a graph $G$ is equivalent to that $G\times K_2$ admits a perfect matching.
\begin{theorem}\label{th4.7}
	Let $G$ be a graph without isolated vertices of order $n\geq 2$. For $t\geq 2$, $\lambda(M^t(G))=2^{t-1}(n+2)-2$ if and only if for any $S\subseteq V$  $|D_2(S)|\geq |S|$, where $D_2(S)=\{x\in V :\exists v\in S, d_G(x,v)>2 \}$.	
\end{theorem}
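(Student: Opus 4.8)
The plan is to first recast the stated combinatorial condition as a matching condition and then prove the two implications. Writing $\overline{G^2}$ for the complement of the square of $G$, note that $d_G(x,v)>2$ means exactly $xv\in E(\overline{G^2})$, so $D_2(S)=N_{\overline{G^2}}(S)$. Form the canonical double cover $\overline{G^2}\times K_2$ with parts $X=V\times\{x\}$ and $Y=V\times\{y\}$; the neighborhood of $\{(v,x):v\in S\}$ is $\{(v,y):v\in D_2(S)\}$, so Hall's condition on $X$ reads exactly ``$|D_2(S)|\ge|S|$ for all $S\subseteq V$''. Hence, by the Marriage Theorem (Theorem~\ref{th4.5}) and $|X|=|Y|=n$, the stated condition is equivalent to ``$\overline{G^2}\times K_2$ has a perfect matching'', which by the remark preceding the statement is equivalent to ``$\overline{G^2}$ has a perfect $2$-matching'' (Theorem~\ref{th4.6}). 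A perfect matching of $\overline{G^2}\times K_2$ is precisely a permutation $\sigma$ of $\{1,\dots,n\}$ with $d_G(v_i,v_{\sigma(i)})\ge 3$ for all $i$; this is the object I will pass back and forth. I also record (remark after Proposition~\ref{prop4.1}) that achieving the lower bound forces every vertex of $G$ to have eccentricity at least $3$, i.e.\ $\overline{G^2}$ has no isolated vertex.

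For necessity I would run a counting/forcing analysis of the label classes. Call the vertices $v_i^k$ with $2^{t-1}\le k\le 2^t-1$ the \emph{copy half}: these are the copies $V'_{t-1}$ of the last Mycielski step, hence pairwise at distance $2$ by Lemma~\ref{lem2.4}, so each label class among the $v$-vertices contains at most one copy-half vertex. The $2^t-1$ roots have eccentricity $2$ and occupy singleton classes with labels disjoint from all others. If $\lambda(M^t(G))=2^{t-1}(n+2)-2$ then at most $2^{t-1}(n+2)-1$ labels are available, while the two facts above force at least $(2^t-1)+2^{t-1}n=2^{t-1}(n+2)-1$ classes; equality then forces every $v$-class to contain \emph{exactly} one copy-half vertex, so every original-half vertex ($0\le k\le 2^{t-1}-1$) shares its label with a copy-half vertex. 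The key step is to examine level $k=2^{t-1}-1$: by Lemma~\ref{lem4.2} a vertex $v_i^{2^{t-1}-1}$ is at distance $2$ from every copy-half vertex except those at level $2^{t-1}$, and by Lemma~\ref{lem4.3} $d_{M^t}(v_i^{2^{t-1}-1},v_j^{2^{t-1}})=\min(3,d_G(v_i,v_j))$. Hence the copy-half partner of $v_i^{2^{t-1}-1}$ must sit at level $2^{t-1}$ with base at $G$-distance $\ge 3$; and since two vertices at level $2^{t-1}-1$ are both copies in $M^{t-1}(G)$ and so at distance $2$ (Lemma~\ref{lem2.4}), distinct $i$ give distinct partners. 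This yields an injective, hence bijective, map $\pi$ with $d_G(v_i,v_{\pi(i)})\ge 3$, i.e.\ a perfect matching of $\overline{G^2}\times K_2$, and the Marriage Theorem returns the condition.

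For sufficiency I would start from a permutation $\sigma$ with $d_G(v_i,v_{\sigma(i)})\ge 3$ and use the reversal pairing of Lemma~\ref{lem4.3}: for each $0\le k\le 2^{t-1}-1$ pair $v_i^k$ with $v_{\sigma(i)}^{2^t-1-k}$, which by Lemma~\ref{lem4.3} lie at distance exactly $3$ and may share a label. Since $\sigma$ is a bijection this partitions all $2^t n$ vertices $v_i^k$ into $2^{t-1}n$ distance-$3$ pairs, which with the $2^t-1$ root singletons gives exactly $2^{t-1}(n+2)-1$ classes. It remains to assign the consecutive labels $0,1,\dots,2^{t-1}(n+2)-2$ to these classes so that any two classes joined by an edge of $M^t(G)$ receive labels differing by at least $2$; concretely I would order the classes level by level with the pairing interleaved and the roots placed last, exactly as in Theorem~\ref{th4.2} and Theorem~\ref{th4.4}, and verify that consecutive classes in the order are non-adjacent. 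Together with the lower bound of Theorem~\ref{th4.1} this gives equality.

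The conceptual heart is the necessity argument, but it is clean once the counting is set up; the real labor—and the main obstacle—is the explicit construction in the sufficiency direction. Turning the abstract partition into a valid labeling requires an ordering of the $2^{t-1}(n+2)-1$ classes in which consecutive classes are non-adjacent (an $L(2,1)$ statement of path-covering type, cf.\ Theorem~\ref{th2.2}), and checking all distance-one constraints across the reversal pairs and the root block is where the bookkeeping is heaviest. I would control this by taking the diameter-two core $M^{t-1}(K_{1,n})$ (which must receive all distinct labels) as the backbone of the order and inserting each original-half vertex next to its paired copy, mirroring the orderings of Theorems~\ref{th4.2} and~\ref{th4.4}.
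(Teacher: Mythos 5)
Your necessity argument is essentially the paper's own: the same counting of label classes (roots as singletons, the $2^{t-1}n$ pairwise-distance-two copy-half vertices exhausting the remaining labels), the same focus on level $2^{t-1}-1$ via Lemma~\ref{lem4.2} and Lemma~\ref{lem4.3}, and the same passage to a perfect matching of $\overline{G^2}\times K_2$ and the Marriage Theorem. That direction is complete and correct. The translation of the condition $|D_2(S)|\geq|S|$ into Hall's condition for $\overline{G^2}\times K_2$ and Tutte's perfect $2$-matching criterion is also exactly what the paper does.

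The gap is in sufficiency, and it is not merely deferred bookkeeping. You correctly identify the reversal pairing $\{v_i^k, v_{\sigma(i)}^{2^t-1-k}\}$ (which coincides with the paper's label classes) and correctly reduce the problem to ordering the $2^{t-1}(n+2)-1$ classes so that consecutive classes span no edge of $M^t(G)$. But the ordering you propose --- ``level by level, as in Theorems~\ref{th4.2} and~\ref{th4.4}'' --- does not transfer: in those theorems consecutive labels go to a vertex and its own copy or to two same-level vertices, which are non-adjacent for free, whereas here two consecutive classes involve \emph{different} base vertices $v_i,v_j$ whose adjacency in $G$ is unconstrained, so a level-by-level sweep over $i=1,\dots,n$ in arbitrary order generally fails. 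The paper resolves this by using the component structure of the perfect $2$-matching of $\overline{G^2}$ (single edges and odd cycles): within a component consecutive base vertices are at $G$-distance at least $3$, and the components are chained by choosing an orientation of each $K_2$ component and a rotation of each odd cycle, the latter requiring a separate claim (for any vertex outside an odd cycle component there is an edge of that cycle both of whose endpoints are non-adjacent to it). None of this appears in your sketch, and without it the existence of the required ordering --- equivalently, the fact that the abstract class partition can actually be realized as an $L(2,1)$-labeling of span $2^{t-1}(n+2)-2$ --- is not established. Since this construction is the bulk of the paper's proof of the backward implication, the proposal as written proves only one direction of the equivalence.
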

\begin{proof}
	Let $G$ be a graph without isolated vertices of order $n\geq 2$, such that for $t \geq 2$, $\lambda(M^t(G))=2^{t-1}(n+2)-2$. Let $f$ be a $\lambda$-labeling of $M^t(G)$, using labels from the set  $L=\{0,\ldots,2^{t-1}(n+2)-2\}$. From Lemma~\ref{lem4.1}, we have $d_{M^t}(v^k_i,u) \leq 2$ and $d_{M^t}(u,u') \leq 2$,  for all $v^k_i\in V^t$ and all $u,u'\in U_t$. The roots are assigned distinct labels, different from the labels assigned to the vertices in $V^t$. So for $2^{t-1}\leq k \leq 2^t-1$, we have $f(v^k_i)\in L\setminus f(U_t)$ and $|L\setminus f(U_t)|=2^{t-1}n$. For $1\leq i,j \leq n$, we have  $d_{M^t}(v^k_i,v^m_j)=2$, where $2^{t-1}\leq k,m \leq 2^t-1$. It follows that the $2^{t-1}n$ vertices $v^k_i$ where $2^{t-1}\leq k \leq 2^t-1$, and $1\leq i \leq n$, have distinct labels and use all the labels in $L\setminus f(U_t)$. By Lemma~\ref{lem4.2}, we have $d_{M^t}(v^k_i,v^{2^{t-1}-1}_j)=2$, for $2^{t-1}+1\leq k \leq 2^t-1$. The only labels remaining in $L\setminus f(U_t)$, for the vertices $v^{2^{t-1}-1}_j$, are those assigned to the vertices $v_i^{2^{t-1}}$. Since $d_{M^t}(v^{2^{t-1}-1}_i,v^{2^{t-1}-1}_j)=2$ and $d_{M^t}(v^{2^{t-1}}_i,v^{2^{t-1}}_j)=2$, then $f(v^{2^{t-1}-1}_i)\neq f(v^{2^{t-1}-1}_j)$ and $f(v^{2^{t-1}}_i)\neq f(v^{2^{t-1}}_j)$. It follows that for any vertex $v^{2^{t-1}}_j$,   there is one and only one vertex $v^{2^{t-1}-1}_i$, such that $f(v^{2^{t-1}-1}_i)=f(v^{2^{t-1}}_j)$. Let $(v_i,x)$ and $(v_j,y)$, $1\leq i,j\leq n$ denote the vertices of $G\times K_2$, where $(v_i,x)(v_j,y)\in E(G\times K_2)$ if and only if  $v_i^0v_j^0 \in E(G)$. Let $M=\{(v_i,x)(v_j,y) : f(v^{2^{t-1}-1}_i)=f(v^{2^{t-1}}_j)\}$. Since $f(v^{2^{t-1}-1}_i)=f(v^{2^{t-1}}_j)$ means by Lemma~\ref{lem4.3}, that $d_G(v_i^0,v_j^0)\geq 3$. From Observation~\ref{obs1}, $M$ is a perfect matching of the graph $\overline{G^2}\times K_2$, then by Theorem~\ref{th4.5} we get the necessity. \par
	The converse, suppose that for any $S\subseteq V$, we have  $|D_2(S)|\geq |S|$. This means by Theorem~\ref{th4.6}, that the graph $\overline{G^2}$ has a perfect $2$-matching, which means that $\overline{G^2}$ has a spanning subgraph $H$, whose connected components are vertex-disjoint edges or odd cycles. Let  $E^1,E^2,\ldots,E^r$  be the $K_2$ components, and $C^1,C^2,\ldots,C^s$  the odd cycle components of $H$. Let us denote the vertices of $V$ as  $x^0_iy^0_i$ is the edge $E^i$ and $c^0_{1,i}c^0_{2,i}\ldots c^0_{n_i,i}$ is the odd cycle $C^i$, where $n_i=|C^i|$. We define an $L(2,1)$-Labeling $f$ to the vertices of $M^t(G)$ as follows.\par
	Suppose that $r\geq 2$, first we label the vertices $x^k_1,y^k_1$  with $0\leq k\leq 2^t-1$, where $x^k_1$ and $y^k_1$  are the vertices $x^0_1$ and $y^0_1$ and their consecutive copies. The labeling $f$ assigns in descending order the labels $2^{t-1}-1,2^{t-1}-2,\ldots,0$ respectively to $x^0_1,x^1_1,\ldots,x^{2^{t-1}-1}_1$ and the labels $2^t-1,2^t-2,\ldots,2^{t-1}$ respectively to  $x^{2^{t-1}}_1,x^{2^{t-1}+1}_1,\ldots,x^{2^t-1}_1$. Then assign the same list of consecutive labels, now in ascending order $0,1,\ldots,2^{t-1}-1$ respectively to the vertices $y^{2^{t-1}}_1,y^{2^{t-1}+1}_1,\ldots,y^{2^t-1}_1$ and the labels $2^{t-1},2^{t-1}+1,\ldots,2^t-1$ respectively to $y^0_1,y^1_1,\ldots,y^{2^{t-1}-1}_1$.
	\begin{itemize}
		\item For $0\leq k \leq 2^{t-1}-1$, $f(x^k_1)=2^{t-1}-k-1$, and for $2^{t-1}\leq k \leq 2^t-1$,  $f(x^k_1)=3\times 2^{t-1}-k-1$.
		\item For $0\leq k \leq 2^{t-1}-1$, $f(y^k_1)=k+2^{t-1}$, and for $2^{t-1}\leq k \leq 2^t-1$,  $f(y^k_1)=k-2^{t-1}$.
	\end{itemize}
	\par  
	We have $f(x^k_1)=f(y^m_1)$ if $m=2^t-k-1$. Since $x^0_1y^0_1\in E(\overline{G^2})$, then  $d_G(x^0_1,y^0_1)\geq 3$, so by Lemma~\ref{lem4.3} $d_{M^t}(x^k_1,y^{2^t-k-1}_1)=3$. Otherwise $f(x^k_1)\neq f(y^m_1)$, since $x^0_1$ and $y^0_1$ are not adjacent in $G$ we have  $d_{M^t}(x^k_1,y^m_1)\geq 2$, for all $0\leq k,m \leq 2^t-1$. Also $d_{M^t}(x^k_1,x^m_1)=d_{M^t}(y^k_1,y^m_1)=2$, $f(x^k_1)\neq f(x^m_1)$ and $f(y^k_1)\neq f(y^m_1)$. The smallest label is $f(x^{2^{t-1}-1}_1)=f(y^{2^{t-1}}_1)=0$, the maximum label is $f(x^{2^{t-1}}_1)=f(y^{2^{t-1}-1}_1)=2^t-1$.\par
	For $2\leq i\leq r$,  we have $d_G(x^0_i,y^0_i)\geq 3$, so a vertex in $E_{i-1}$ cannot be adjacent in $G$ to both $x^0_i$ and $y^0_i$. Since in every $E^i$ the vertices $x^0_i$ and $y^0_i$ are symmetric, we rearrange the vertices of each $E^i$ depending on the cases:
	\item  $i)$ If $x^0_{i-1}$ is adjacent in $G$ to a vertex in $E^i$, we consider without loss of generality that $x^0_{i-1}$ is adjacent to $y^0_i$.
	\item  $ii)$ If $x^0_{i-1}$ is not adjacent to $E^i$ and $y^0_{i-1}$ is adjacent, we let $d_G(y^0_{i-1},x^0_i)=1$. Otherwise the vertices in $E^{i-1}$ and $E^i$ are mutually non-adjacent. This means that $d_G(x^0_{i-1},x^0_i)\geq 2$, and $d_G(y^0_{i-1},y^0_i)\geq 2$, for all $2\leq i\leq r$.\par
	With respect to the above assumptions, we label the vertices $x^k_i$ and $y^k_i$ with $2\leq i\leq r$, as following.
	\begin{itemize}
		\item  For $2\leq i\leq r-1$, and $0\leq k\leq 2^t-1$, $f(x^k_i)=(i-1)2^t+f(x^k_1)$, and $f(y^k_i)=(i-1)2^t+f(y^k_1)$. 
		\item For $0\leq k\leq 2^{t-1}-1$, $f(x^k_r)=(r-1)2^t+f(x^k_1)$, and for $2^{t-1}\leq k\leq 2^t-1$, $f(x^k_r)=(r-1)2^t+k$. 
		\item For $0\leq k\leq 2^{t-1}-1$, $f(y^k_r)=r2^t-k-1$, and for  $2^{t-1}\leq k\leq 2^t-1$, $f(y^k_r)=(r-1)2^t+f(y^k_1)$.
	\end{itemize} 
	\par 
	The labeling $f$ uses distinct labels from $(i-1)2^t,\ldots, i2^t-1$, for every pair of $x^k_i$,$y^m_i$, where $m=2^t-k-1$,  by using the same pattern for $x^k_1$,$y^m_1$ (except for $x^k_r$,$y^k_r$). In the case where $r=1$, let for $0\leq k \leq 2^{t-1}-1$, $f(x^k_1)=2^{t-1}-k-1$, for $2^{t-1}\leq k\leq 2^t-1$, $f(x^k_1)=k$,  for $0\leq k\leq 2^{t-1}-1$, $f(y^k_1)=2^t-k-1$, and for $2^{t-1}\leq k \leq 2^t-1$,  $f(y^k_1)=k-2^{t-1}$.  The only vertices from two different components, with the difference between the labels equal to $1$, are for $x^{2^{t-1}}_{i-1}$ and $y^{2^{t-1}-1}_{i-1}$, with both $x^{2^{t-1}-1}_i$ and $y^{2^{t-1}}_i$. This does not violate the distance two conditions, since $d_G(x^0_{i-1},x^0_i)\geq 2$, and $d_G(y^0_{i-1},y^0_i)\geq 2$, for all $2\leq i\leq r$.
	The maximum label assigned is $f(x^{2^t-1}_r)=f(y^0_r)=r2^t-1$.\par
	If $s\geq 1$, next we label the vertices of the odd cycle components $C^i$.
	We make the following claim.
	\begin{claim}
		For a vertex $v$ in $G$ not in the odd cycle component $C^i=c^0_{1,i}c^0_{2,i}\ldots c^0_{n_i,i}$, there is at least one edge  $c^0_{p,i}c^0_{q,i}\in C^i$, such that $v$ is not adjacent in $G$ to both $c^0_{p,i}$ and $c^0_{q,i}$.
	\end{claim}
	\begin{proof}
		We prove this by using contradiction, we suppose that $v$ is adjacent to at least one endpoint of any $c^0_{p,i}c^0_{q,i}\in C^i$. We may assume that $v$ is adjacent to $c^0_{1,i}$. Since $d_G(c^0_{1,i},c^0_{2,i})\geq 3$, $v$ is not adjacent to $c^0_{2,i}$, so $v$ is adjacent to $c^0_{3,i}$, and so forth. Hence, if $j$ is odd $v$ is adjacent to  $c^0_{j,i}$, and if $j$ is even $v$ is not adjacent to $c^0_{j,i}$. Since $v$ is adjacent to $c^0_{1,i}$, then $v$ is not adjacent to $c^0_{n_i,i}$. It follows that $n_i$ is even, a contradiction.~\end{proof}
	Since the cycles $C^i$ are symmetric, we may consider that $d_G(y_r,c^0_{1,1})\geq 2$, and $d_G(y_r,c^0_{n_1,1})\geq 2$, and for $1\leq i \leq s-1$,  $d_G(c^0_{n_i,i},c^0_{1,i+1})\geq 2$, and $d_G(c^0_{n_i,i},c^0_{n_{i+1},i+1})\geq 2$. We label the vertices $c^k_{j,i}$ where  $1\leq j\leq n_i$, $1\leq i\leq s$ and $0\leq k\leq 2^t-1$, with respect to the above assumptions. 
	\begin{itemize}
		\item For $0\leq k \leq 2^{t-1}-1$, $f(c^k_{1,1})=r2^t+2^{t-1}-k-1$, and for $2^{t-1}\leq k \leq 2^t-1$,  $f(c^k_{1,1})=r2^t+k$. 
		\item For $2\leq j \leq n_1-1$ and all $0\leq k \leq 2^t-1$, $f(c^k_{j,1})=f(c^k_{1,1})+(j-1)2^{t-1}$.
		\item For $0 \leq k \leq 2^{t-1}-1$, $f(c^k_{n_1,1})=f(c^k_{1,1})+(n_1-1)2^{t-1}$, and for $2^{t-1}\leq k \leq 2^t-1$,   $f(c^k_{n_1,1})=f(c^{2^t-k-1}_{1,1})$.	
	\end{itemize}	
	\par
	The smallest label for the vertices $c^k_{i,1}$ is $f(c^{2^{t-1}-1}_{1,1})=f(c^{2^{t-1}}_{n_1,1})=r2^t$ and the maximum is $f(c^0_{n_1,1})=f(c^{2^t-1}_{n_1-1,1})=r2^t+n_12^{t-1}-1$.  Now let $\varphi_i=r2^t+\sum_{j=1}^{i-1}n_j2^{t-1}$. For $2\leq i\leq s$, we label $f(c^{2^{t-1}-1}_{1,i})=f(c^{2^{t-1}}_{n_i,i})=\varphi_i$, then
	\begin{itemize}
		\item For $0\leq k \leq 2^{t-1}-1$, $f(c^k_{1,i})=\varphi_i+2^{t-1}-k-1$, and for $2^{t-1}\leq k \leq 2^t-1$, $f(c^k_{1,i})=\varphi_i+k$.
		\item For $2\leq j \leq n_i-1$ and all $0\leq k \leq 2^t-1$, $f(c^k_{j,i})=f(c^k_{1,i})+(j-1)2^{t-1}$.\par     
		\item For  $0 \leq k \leq 2^{t-1}-1$, 	 $f(c^k_{n_i,i})=f(c^k_{1,i})+(n_i-1)2^{t-1}$, and for $2^{t-1}\leq k \leq 2^t-1$,  $f(c^k_{n_i,i})=f(c^{2^t-k-1}_{1,i})$.	
	\end{itemize}	
	\par
	The labeling $f$ uses $n_i2^{t-1}$ distinct labels for the $n_i2^t$ vertices of each component $C^i$ and their copies. For $0 \leq k \leq 2^{t-1}-1$, we have $f(c^k_{1,i})=f(c^{2^t-k-1}_{n_i,i})$, and for $2\leq j\leq n_i$ $f(c^k_{j,i})=f(c^{2^t-k-1}_{j-1,i})$. It is possible, since $d_G(c^0_{j,i},c^0_{j-1,i})\geq 3$, which means by Lemma~\ref{lem4.3} that $d_{M^t}(c^k_{j,i},c^{2^t-k-1}_{j-1,i})=3$. For two vertices  $c^k_{j,i}$, $c^m_{l,i}$ from the same component,  the difference between the labels is equal to $1$ in the following cases: $i)$ The vertices are copies of the same vertex, or if $ 2^{t-1}\leq k,m\leq 2^t-1$, in those two cases $d_{M^t}(c^k_{j,i},c^m_{l,i})=2$.  $ii)$  For $l=j+1$, we have $d_G(c^0_{j,i},c^0_{j+1,i})\geq 3$, then  $d_{M^t}(c^k_{j,i},c^m_{j+1,i})\geq 2$. 
	$iii)$ If $l=j+2$, $k=2^t-1$ and $m=2^{t-1}-1$, we have from Lemma~\ref{lem4.2} $d_{M^t}(c^{2^t-1}_{j,i},c^{2^{t-1}-1}_{l,i})=2$. The difference between the labels is equal to $1$ for vertices from two different odd cycle components, only occur for $c^0_{n_i,i}$ and $c^{2^t-1}_{n_{i-1},i}$ with $c^{2^{t-1}-1}_{1,i+1}$ and  $c^{2^{t-1}}_{n_{i+1},i+1}$, for $1\leq i \leq s-1$, we have  $d_G(c^0_{n_i,i},c^0_{n_{i+1},i+1})\geq 2$ and $d_G(c^0_{n_i,i},c^0_{1,i+1})\geq 2$,  from Lemma~\ref{lem4.2} the vertices are at distance greater or equal $2$ in $M^t(G)$.
	The maximum label assigned is $f(c^0_{n_s,s})=f(c^{2^t-1}_{n_s-1,s})=r2^t+\sum_{j=1}^{s}n_j2^{t-1}-1=n2^{t-1}-1$.

	\par
	We finally label the remaining $2^t-1$ roots with consecutive labels beginning with the label $n2^{t-1}$ in the following order $$u_{1,2^{t-1}-1}u_{1,2^{t-1}-2}\ldots u_{1,0}u_{2,2^{t-2}-1}u_{2,2^{t-2}-2}\ldots u_{2,0}u_{3,2^{t-3}-1}\ldots u_{t,0} $$		
	\begin{figure}[h]
		
		\centering
		
		\begin{tikzpicture} [scale=0.7, rotate=0.5]
		\clip(-2.3,-1.52) rectangle (13,5.85);
		\draw [line width=0.6pt] (-2,2)-- (-1,0);
		\draw [line width=0.6pt] (1,2)-- (2,0);
		\draw [line width=0.6pt] (4,2)-- (5,0);
		\draw [line width=0.6pt] (5,2)-- (6,0);
		\draw [line width=0.6pt] (6,2)-- (4,0);
		\draw [line width=0.6pt] (8,2)-- (9,0);
		\draw [line width=0.6pt] (9,2)-- (10,0);
		\draw [line width=0.6pt] (10,2)-- (11,0);
		\draw [line width=0.6pt] (11,2)-- (12,0);
		\draw [line width=0.6pt] (8,0)-- (12,2);
		\draw [line width=0.6pt] (-1,2)-- (-2,0);
		\draw [line width=0.6pt] (2,2)-- (1,0);
		\begin{scriptsize}
		\draw [fill=black] (-2,3) circle (2.5pt);
		\draw[color=black] (-1.9,3.25) node {2};
		\draw [fill=black] (-2,2) circle (2.5pt);
		\draw[color=black] (-1.86,2.25) node {3};
		\draw [fill=black] (-2,0) circle (2.5pt);
		\draw[color=black] (-2.1,0.25) node {0};
		\draw [fill=black] (-2,-1) circle (2.5pt);
		\draw[color=black] (-1.88,-0.75) node {1};
		\draw [fill=black] (-2,-1) circle (2.5pt);
		\draw[color=black] (-2,-1.3) node {$x^0_1$};
		\draw [fill=black] (-1,-1) circle (2.5pt);
		\draw[color=black] (-0.86,-0.7) node {2};
		\draw [fill=black] (-1,-1) circle (2.5pt);
		\draw[color=black] (-0.86,-1.3) node {$y^0_1$};
		\draw [fill=black] (-1,0) circle (2.5pt);
		\draw[color=black] (-0.82,0.25) node {3};
		\draw [fill=black] (-1,2) circle (2.5pt);
		\draw[color=black] (-0.76,2.25) node {0};
		\draw [fill=black] (-1,3) circle (2.5pt);
		\draw[color=black] (-0.74,3.29) node {1};
		\draw [fill=black] (1,3) circle (2.5pt);
		\draw[color=black] (1.16,3.25) node {7};
		\draw [fill=black] (1,2) circle (2.5pt);
		\draw[color=black] (1.16,2.25) node {6};
		\draw [fill=black] (1,0) circle (2.5pt);
		\draw[color=black] (0.9,0.25) node {4};
		\draw [fill=black] (1,-1) circle (2.5pt);
		\draw[color=black] (1.1,-0.7) node {5};
		\draw [fill=black] (1,-1) circle (2.5pt);
		\draw[color=black] (1.1,-1.3) node {$x^0_2$};
		\draw [fill=black] (2,-1) circle (2.5pt);
		\draw[color=black] (2.1,-0.7) node {7};
		\draw [fill=black] (2,-1) circle (2.5pt);
		\draw[color=black] (2.1,-1.3) node {$y^0_2$};
		\draw [fill=black] (2,0) circle (2.5pt);
		\draw[color=black] (2.16,0.25) node {6};
		\draw [fill=black] (2,2) circle (2.5pt);
		\draw[color=black] (2.16,2.25) node {4};
		\draw [fill=black] (2,3) circle (2.5pt);
		\draw[color=black] (2.16,3.25) node {5};
		\draw [fill=black] (4,3) circle (2.5pt);
		\draw[color=black] (4.24,3.25) node {11};
		\draw [fill=black] (5,3) circle (2.5pt);
		\draw[color=black] (5.24,3.25) node {13};
		\draw [fill=black] (6,3) circle (2.5pt);
		\draw[color=black] (6.16,3.25) node {9};
		\draw [fill=black] (4,2) circle (2.5pt);
		\draw[color=black] (4.24,2.25) node {10};
		\draw [fill=black] (5,2) circle (2.5pt);
		\draw[color=black] (5.24,2.25) node {12};
		\draw [fill=black] (6,2) circle (2.5pt);
		\draw[color=black] (6.16,2.25) node {8};
		\draw [fill=black] (4,0) circle (2.5pt);
		\draw[color=black] (3.95,0.25) node {8};
		\draw [fill=black] (5,0) circle (2.5pt);
		\draw[color=black] (5.24,0.25) node {10};
		\draw [fill=black] (6,0) circle (2.5pt);
		\draw[color=black] (6.24,0.25) node {12};
		\draw [fill=black] (4,-1) circle (2.5pt);
		\draw[color=black] (4.1,-0.7) node {9};
		\draw [fill=black] (4,-1) circle (2.5pt);
		\draw[color=black] (4.16,-1.3) node {$c^0_{1,1}$};
		\draw [fill=black] (5,-1) circle (2.5pt);
		\draw[color=black] (5.1,-0.7) node {11};
		\draw [fill=black] (5,-1) circle (2.5pt);
		\draw[color=black] (5.2,-1.3) node {$c^0_{2,1}$};
		\draw [fill=black] (6,-1) circle (2.5pt);
		\draw[color=black] (6.1,-0.7) node {13};
		\draw [fill=black] (6,-1) circle (2.5pt);
		\draw[color=black] (6.2,-1.3) node {$c^0_{3,1}$};
		\draw [fill=black] (8,0) circle (2.5pt);
		\draw[color=black] (8,0.25) node {14};
		\draw [fill=black] (8,-1) circle (2.5pt);
		\draw[color=black] (8.1,-0.7) node {15};
		\draw [fill=black] (8,-1) circle (2.5pt);
		\draw[color=black] (8.2,-1.3) node {$c^0_{1,2}$};
		\draw [fill=black] (9,-1) circle (2.5pt);
		\draw[color=black] (9.1,-0.7) node {17};
		\draw [fill=black] (9,-1) circle (2.5pt);
		\draw[color=black] (9.2,-1.3) node {$c^0_{2,2}$};
		\draw [fill=black] (10,-1) circle (2.5pt);
		\draw[color=black] (10.1,-0.7) node {19};
		\draw [fill=black] (10,-1) circle (2.5pt);
		\draw[color=black] (10.2,-1.3) node {$c^0_{3,2}$};
		\draw [fill=black] (11,-1) circle (2.5pt);
		\draw[color=black] (11.1,-0.7) node {21};
		\draw [fill=black] (11,-1) circle (2.5pt);
		\draw[color=black] (11.2,-1.3) node {$c^0_{4,2}$};
		\draw [fill=black] (12,-1) circle (2.5pt);
		\draw[color=black] (12.1,-0.7) node {23};
		\draw [fill=black] (12,-1) circle (2.5pt);
		\draw[color=black] (12.2,-1.3) node {$c^0_{5,2}$};
		\draw [fill=black] (12,0) circle (2.5pt);
		\draw[color=black] (12.24,0.25) node {22};
		\draw [fill=black] (11,0) circle (2.5pt);
		\draw[color=black] (11.24,0.25) node {20};
		\draw [fill=black] (10,0) circle (2.5pt);
		\draw[color=black] (10.24,0.25) node {18};
		\draw [fill=black] (9,0) circle (2.5pt);
		\draw[color=black] (9.24,0.25) node {16};
		\draw [fill=black] (8,2) circle (2.5pt);
		\draw[color=black] (8.24,2.25) node {16};
		\draw [fill=black] (9,2) circle (2.5pt);
		\draw[color=black] (9.24,2.25) node {18};
		\draw [fill=black] (10,2) circle (2.5pt);
		\draw[color=black] (10.24,2.25) node {20};
		\draw [fill=black] (11,2) circle (2.5pt);
		\draw[color=black] (11.24,2.25) node {22};
		\draw [fill=black] (12,2) circle (2.5pt);
		\draw[color=black] (12.24,2.25) node {14};
		\draw [fill=black] (12,3) circle (2.5pt);
		\draw[color=black] (12.24,3.25) node {15};
		\draw [fill=black] (11,3) circle (2.5pt);
		\draw[color=black] (11.24,3.25) node {23};
		\draw [fill=black] (10,3) circle (2.5pt);
		\draw[color=black] (10.24,3.25) node {21};
		\draw [fill=black] (9,3) circle (2.5pt);
		\draw[color=black] (9.24,3.25) node {19};
		\draw [fill=black] (8,3) circle (2.5pt);
		\draw[color=black] (8.24,3.25) node {17};
		\draw [fill=black] (4,1) circle (2.5pt);
		\draw[color=black] (4,1.25) node {25};
		\draw [fill=black] (4,1) circle (2.5pt);
		\draw[color=black] (4,0.7) node {$u_{1,0}$};
		\draw [fill=black] (4,4) circle (2.5pt);
		\draw[color=black] (4,4.25) node {24};
		\draw [fill=black] (4,4) circle (2.5pt);
		\draw[color=black] (4,3.7) node {$u_{1,1}$};
		\draw [fill=black] (4,5.44) circle (2.5pt);
		\draw[color=black] (4,5.69) node {26};
		\draw [fill=black] (4,5.44) circle (2.5pt);
		\draw[color=black] (4,5.14) node {$u_{2,0}$};
		\end{scriptsize}
		\end{tikzpicture}
		\caption{\label{fig8} An $L(2,1)$-Labeling of $M^2(G)$ as in Theorem 4.6, where $\overline{G^2}$ has a perfect $2$-matching with two $K_2$ components and two cycles of order $3$ and $5$, here the edges represent a perfect matching of $\overline{G^2}\times K_2$.}		
	\end{figure}
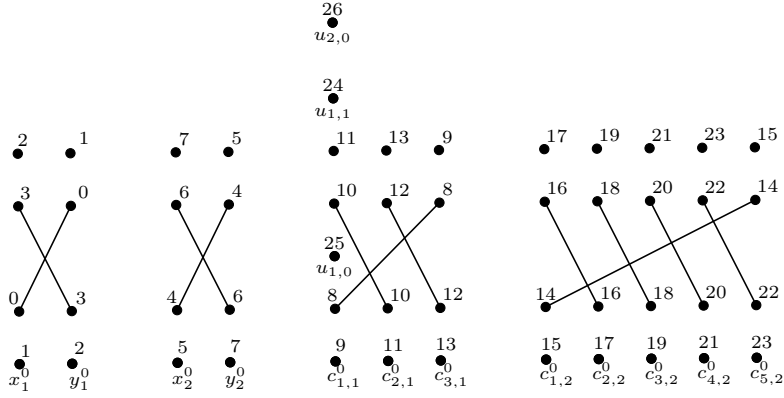
	 \par
	Since $d_{M^t}(u_{1,2^{t-1}-1},c^0_{n_s,s})=2$, $d_{M^t}(u_{1,2^{t-1}-1},c^{2^t-1}_{n_s-1,s})=2$, $d_{M^t}(u_{i,j},u_{i,j-1})=2$, and $d_{M^t}(u_{i,0},u_{i+1,2^{t-(i+1)}-1})=2$, this produces an $L(2,1)$-labeling with span $2^{t-1}(n+2)-2$. In Figure~\ref{fig8} an $L(2,1)$-labeling with the same schema for $M^2(G)$, where $\overline{G^2}$ has a perfect $2$-matching consisting of two $K_2$ components and  two cycles of order $3$ and $5$ respectively.	
	Hence from the lower bound of Theorem~\ref{th4.1} for $t\geq 2$, we have $\lambda(M^t(G))=2^{t-1}(n+2)-2$.
\end{proof}
The labeling defined in Theorem~\ref{th4.7} is a valid $L(2,1)$-labeling for any graph $G$ of order $n\geq 2$, if $\overline{G^2}$ has a perfect $2$-matching, then we can label the vertices of $M^t(G)$ with a labeling having span $2^{t-1}(n+2)-2$. Next, we give an upper bound for $\lambda(M^t(G))$ implying the maximum size of a $2$-matching of $\overline{G^2}$.
\begin{theorem}\label{th4.8}
	Let $G$ be a graph of order $n\geq 2$, with $\nu_2(\overline{G^2})=p$. For $t\geq 2$, we have $\lambda(M^t(G))\leq  2^{t-1}(2n-p+2)-2$.
\end{theorem}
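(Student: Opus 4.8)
The plan is to prove the bound constructively, by running the labeling scheme of Theorem~\ref{th4.7} on a \emph{maximum} $2$-matching of $\overline{G^2}$ instead of a perfect one, and then paying full price for the vertices it misses. First I would fix a maximum $2$-matching $F$ of $\overline{G^2}$ with $\nu_2(\overline{G^2})=p$, and normalise it: it is standard (and underlies Theorem~\ref{th4.6}) that a maximum $2$-matching can be chosen so that its components are single edges $K_2$ of weight $2$ together with odd cycles of weight $1$, since every even cycle and every weight-$1$ path can be replaced by weight-$2$ edges without decreasing the size, while odd-length paths would strictly increase it and hence do not occur. In this form every covered vertex is saturated, so the size of $F$ equals the number of covered vertices; thus exactly $p$ vertices of $G$ are covered and $n-p$ remain uncovered. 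The crucial structural observation is that the uncovered set is independent in $\overline{G^2}$, for otherwise an uncovered edge could be given weight $2$ and $F$ enlarged; by the definition $E(\overline{G^2})=\{v_iv_j:d_G(v_i,v_j)\ge 3\}$ this means the uncovered vertices are pairwise at distance at most $2$ in $G$.

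I would then label $M^t(G)$ in three blocks. On the $p$ covered vertices I apply the labeling of Theorem~\ref{th4.7} verbatim to the $K_2$- and odd-cycle-components of $F$; this is legitimate because the distances in $M^t(G)$ between copies of covered vertices depend only on the $G$-distances of the underlying vertices (Lemmas~\ref{lem4.1} and~\ref{lem4.3}, Observation~\ref{obs1}), which are untouched by the presence of the remaining vertices, and it uses precisely $p\,2^{t-1}$ labels, namely $2^{t-1}$ per covered vertex. On the $n-p$ uncovered vertices, Lemma~\ref{lem4.1} combined with the independence fact above forces all $(n-p)2^t$ of their copies to be pairwise within distance $2$ in $M^t(G)$, so they must receive $(n-p)2^t$ distinct labels, which I take to be the next block. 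Finally the $2^t-1$ roots, all of eccentricity $2$, take the top labels in the order used in Theorem~\ref{th4.7}. The total number of distinct labels is $p\,2^{t-1}+(n-p)2^t+(2^t-1)$, giving span $2^{t-1}(2n-p)+2^t-2=2^{t-1}(2n-p+2)-2$; reassuringly $p=n$ recovers Theorem~\ref{th4.7} and $p=0$ recovers Theorem~\ref{th4.4}.

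The substance of the proof is checking that this concatenation is a genuine $L(2,1)$-labeling. Since all labels inside the uncovered block are distinct, the only constraint there is that two \emph{adjacent} copies never receive consecutive labels, equivalently that consecutive labels go to non-adjacent vertices. I would achieve this exactly as in Theorem~\ref{th4.2}: list the block vertex-by-vertex, writing each uncovered vertex's $2^t$ copies contiguously (any internal order is safe, as copies of one vertex are mutually at distance $2$ by Lemma~\ref{lem4.1}), and arranging each such group to begin and end with a copy $v_i^k$ with $2^{t-1}\le k\le 2^t-1$. Because the last-level copies $\{v_i^k:2^{t-1}\le k\le 2^t-1\}=V'_{t-1}$ form an independent set of $M^t(G)$, every junction between two consecutive groups then joins two non-adjacent vertices, as required; here $2^{t-1}\ge 2$ guarantees each group has two such high copies to spare.

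The main obstacle, and the part demanding care, is the inter-block junctions: last covered to first uncovered, last uncovered to first root, and the two distinct boundary vertices carrying each shared label at a join. Each such pair receives consecutive labels and so must be non-adjacent, yet a root is adjacent to all of $V'_{t-1}$, and a covered boundary vertex may be adjacent to some uncovered copies. I would resolve this with the same rearrangement device already exploited in Theorem~\ref{th4.7}: reorient each boundary component and choose which copy occupies the boundary, using the independence of $V'_{t-1}$ to align high copies across a join, and noting that a single boundary vertex has a bounded neighbourhood while the adjacent block offers at least $2^t\ge 4$ admissible boundary copies, so a compatible choice always exists. Once these finitely many junction conditions are verified, the construction is a valid $L(2,1)$-labeling of span $2^{t-1}(2n-p+2)-2$, which proves the theorem.
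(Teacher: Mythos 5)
Your overall strategy is the same as the paper's: normalise a maximum $2$-matching of $\overline{G^2}$ into $K_2$'s and odd cycles covering exactly $p$ vertices, run the Theorem~\ref{th4.7} construction on the covered part (using $2^{t-1}$ labels per covered vertex, which is legitimate in $M^t(G)$ because all the distance conditions used there are conditions on $d_G$), and give the $(n-p)2^t$ copies of the uncovered vertices distinct consecutive labels; your label count $p2^{t-1}+(n-p)2^t+(2^t-1)$ matches the claimed span. The independence of the uncovered set in $\overline{G^2}$ is correct but not needed for an upper bound. The one substantive deviation is your block order (covered, then uncovered, then roots, versus the paper's covered-plus-roots, then uncovered), and that is where your argument has a genuine gap.

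The problem is the junction from the covered block to the uncovered block. In the Theorem~\ref{th4.7} scheme the top label of the covered block is always carried by a level-$0$ vertex (namely $y_r^0$, or $c^0_{n_s,s}$ when odd cycles are present), and a level-$0$ vertex $c^0$ is adjacent in $M^t(G)$ to \emph{every} one of the $2^t$ copies of each $G$-neighbour of $c$ (inductively, $c^0\sim v^0,v^1$ in $M(G)$, and at each iteration $c^0$ becomes adjacent to the copies of all its previous neighbours). Hence if the uncovered vertex you place first is a $G$-neighbour of that last covered vertex, the number of ``admissible boundary copies'' is zero, not ``at least $2^t\ge 4$'' as you claim; your counting argument quantifies over the wrong set, since adjacency to $c^0$ is decided once and for all by $G$-adjacency of the underlying vertices, uniformly over all copies. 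Repairing this would require a global argument that some reorientation of the last covered component and some choice of first uncovered vertex are simultaneously compatible (you do not supply one, and it interacts with the orientation constraints already imposed inside Theorem~\ref{th4.7}). The paper sidesteps the issue entirely by keeping the roots where Theorem~\ref{th4.7} puts them, at the top of the covered block: the covered block then ends with $u_{t,0}$, which is adjacent only to the top-level copies $V'_{t-1}$, so it may be followed by the level-$0$ copy $v_1^0$ of any uncovered vertex, and since the uncovered block closes the labeling no further junction arises. With that reordering (or an actual proof of your junction claim) your argument becomes the paper's proof; as written, the junction step fails.
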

\begin{proof}
	Let $G$ be a graph with $\nu_2(\overline{G^2})=p$. So there is an induced subgraph $H$ of $\overline{G^2}$ of order $p$, such that $H$ has a perfect $2$-matching. Let $V_H$ be the set of vertices of $H$, from Theorem~\ref{th4.7}, we can label the vertices of $M^t(G[V_H])$ with an $L(2,1)$-Labeling $f$ with span $2^{t-1}(p+2)-2$, where $f(u_{t,0})=2^{t-1}(p+2)-2$.\par 
	Now in $M^t(G)$, if $p<n$ the vertices remaining unlabeled by $f$ are the vertices in $V\setminus V_H$  and their copies. Let us denote $v^k_i$, where $1\leq i\leq q$, and $0\leq k\leq 2^t-1$, such that $p+q=n$, the vertices of $V\setminus V_H$ and their consecutive copies. Let $\chi_i$ with $2\leq i\leq q$, be a sequence of vertices in $M^t(G)$, where $\chi_i=v^2_iv^0_iv^1_i$ if $i$ is odd, and $\chi_i=v^1_iv^0_iv^2_i$ if $i$ is even. The only vertex labeled $2^{t-1}(p+2)-2$ by $f$ is $u_{t,0}$, using consecutive labels we label the vertices $v^k_i$, with $1\leq i\leq q$  beginning with the label $2^{t-1}(p+2)-1$, in the following order  $v^0_1v^2_1v^1_1\chi_2\ldots \chi_qv^3_qv^3_{q-1}\ldots v^3_1v^4_1\ldots v^4_qv^5_q\ldots v^{2^t-1}_1$.\par
	This produces an $L(2,1)$-labeling with span $2^{t-1}(p+2)-2+2^t(n-p)=2^{t-1}(2n-p+2)-2$.
\end{proof}
Similarly to Subsection~\ref{sec3.3}, we put interest in connected graphs, the path $P_n$ and cycle $C_n$, which we use to determine some connected graphs with the smallest $\lambda(M^t(G))$. 
\begin{cor}\label{cor4.3}
	For $t\geq 2$,	$$\lambda(M^t(P_n))=\begin{cases}
	4\times 2^t-2 \hspace{40pt}if\,\, n=3,4,5, \\
	2^{t-1}(n+2)-2 \hspace{15pt}if\,\,n \geq 6.	
	\end{cases}	$$
\end{cor}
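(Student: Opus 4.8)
The plan is to treat the three ranges $n=3$, $n\in\{4,5\}$ and $n\ge 6$ separately, using throughout the identity $4\cdot 2^t-2 = 2^{t-1}\cdot 8 - 2$. For $n=3$ the graph $P_3$ has $diam(P_3)=2$, so Lemma~\ref{lem2.5} gives $diam(M^t(P_3))=2$ and Theorem~\ref{th4.4} applies directly, yielding $\lambda(M^t(P_3))=2^t(3+1)-2=4\cdot 2^t-2$.

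For $n\in\{4,5\}$ I would squeeze the value between Proposition~\ref{prop4.1} and Theorem~\ref{th4.8}. The internal vertices of $P_4$, namely $v_2$ and $v_3$, have eccentricity $2$, while $v_1$ and $v_4$ have eccentricity $3$; thus $P_4$ has exactly $k=2$ vertices of eccentricity $2$. Similarly $P_5$ has exactly $k=1$ such vertex, the center $v_3$. In both cases Proposition~\ref{prop4.1} gives the lower bound $2^{t-1}(n+k+2)-2=2^{t-1}\cdot 8-2=4\cdot 2^t-2$. For the matching upper bound I would compute $\nu_2(\overline{P_n^2})$, recalling that $v_iv_j\in E(\overline{P_n^2})$ iff $|i-j|\ge 3$: the graph $\overline{P_4^2}$ is the single edge $v_1v_4$ together with two isolated vertices, so $\nu_2(\overline{P_4^2})=2$, and $\overline{P_5^2}$ is the path $v_4v_1v_5v_2$ together with the isolated vertex $v_3$, so $\nu_2(\overline{P_5^2})=4$. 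Substituting $p=\nu_2$ into Theorem~\ref{th4.8} gives $2^{t-1}(2n-p+2)-2=2^{t-1}\cdot 8-2$ in both cases, so the two bounds coincide and the equality follows.

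For $n\ge 6$ the target value is exactly the general lower bound $2^{t-1}(n+2)-2$ of Theorem~\ref{th4.1}, so it suffices to prove the matching upper bound through Theorem~\ref{th4.7}, i.e.\ to show that $\overline{P_n^2}$ has a perfect $2$-matching. The key observation is that an independent set of $\overline{P_n^2}$ is a set of indices that are pairwise at distance at most $2$ in $P_n$, hence is contained in a window $\{i,i+1,i+2\}$ and has size at most $3$. For the critical sets $S=\{i,i+1,i+2\}$ one computes $N_{\overline{P_n^2}}(S)=\{1,\dots,i-1\}\cup\{i+3,\dots,n\}$, of size $n-3$, so $|N_{\overline{P_n^2}}(S)|=n-3\ge 3=|S|$ precisely when $n\ge 6$; the sets of size one and two are verified in the same way and are never the obstruction for $n\ge 6$. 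By Theorem~\ref{th4.6} (Tutte's criterion) $\overline{P_n^2}$ therefore has a perfect $2$-matching, and Theorem~\ref{th4.7} gives $\lambda(M^t(P_n))=2^{t-1}(n+2)-2$.

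The main obstacle is the case $n\ge 6$: everything hinges on correctly identifying the independent sets of $\overline{P_n^2}$ and checking that the threshold $n-3\ge 3$ is exactly $n\ge 6$, which also explains why $P_4$ and $P_5$ (where the same count gives $n-3<3$) fall into the other regime. A secondary delicate point is that for $n\in\{4,5\}$ the answer is pinned down only because the eccentricity lower bound of Proposition~\ref{prop4.1} and the $2$-matching upper bound of Theorem~\ref{th4.8} happen to coincide at $2^{t-1}\cdot 8-2$; one must compute both $k$ and $\nu_2(\overline{P_n^2})$ accurately for the squeeze to close.
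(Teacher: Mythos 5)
Your proof is correct and follows the same overall skeleton as the paper: Theorem~\ref{th4.4} for $n=3$, the $2$-matching upper bound of Theorem~\ref{th4.8} with $\nu_2(\overline{P_4^2})=2$ and $\nu_2(\overline{P_5^2})=4$ for $n=4,5$, and Theorem~\ref{th4.7} for $n\geq 6$. The one genuine divergence is the lower bound for $n\in\{4,5\}$: the paper gets $4\cdot 2^t-2$ from the containment $M^t(P_3)\subseteq M^t(P_n)$ together with Lemma~\ref{lem2.7} and the already-settled case $n=3$, whereas you invoke Proposition~\ref{prop4.1} after counting the eccentricity-$2$ vertices ($k=2$ for $P_4$, $k=1$ for $P_5$); both counts are right and both yield $2^{t-1}\cdot 8-2$, so the squeeze closes either way --- the subgraph argument is slightly lighter, while your eccentricity argument makes it transparent why the answer cannot drop below $4\cdot 2^t-2$ even though $n+2<8$ fails only for $n\le 5$. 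For $n\geq 6$ you actually supply the verification that the paper dismisses as ``easy to see'': your identification of the independent sets of $\overline{P_n^2}$ as windows of at most three consecutive indices, the count $|N_{\overline{P_n^2}}(S)|=n-3$ for the triples, and the appeal to Tutte's criterion (Theorem~\ref{th4.6}) are all correct; note only that Theorem~\ref{th4.7} is phrased with the Hall-type condition for \emph{all} $S\subseteq V$, so strictly you should pass through the equivalence (recorded in the paper just before Theorem~\ref{th4.7}) between a perfect $2$-matching of $\overline{G^2}$ and that condition, but this is exactly how the sufficiency direction of Theorem~\ref{th4.7} is proved, so nothing is missing.
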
 
\begin{proof}	
	For $n=3$,  we have $diam(P_3)=2$, by Theorem~\ref{th4.4} for $t\geq 2$ we have $ \lambda(M^t(P_3))= 4\times 2^t-2$.\par	
	For $n=4$, $\overline{P^2_4}$ consists of a single edge and $2$ isolated vertices. So $\nu_2(\overline{P^2_4})=2$, it follows from Theorem~\ref{th4.8} that $\lambda(M^t(P_4))\leq 4\times 2^t-2$. Since $M^t(P_3)$ is a subgraph of $M^t(P_4)$, from above $\lambda(M^t(P_4))=4\times 2^t-2$.\par
	For $n=5$, $\overline{P^2_5}$ consists of $2$ independent edges and one isolated vertex. Hence $\nu_2(\overline{P^2_5})=4$, so from Theorem~\ref{th4.8} $\lambda(M^t(P_5))\leq 4\times 2^t-2$. Also $M^t(P_3)$ is a subgraph of $M^t(P_5)$, then $\lambda(M^t(P_5))=4\times 2^t-2$.\par
	For $n\geq 6$, it is easy to see that the path $P_n$ verifies the condition of Theorem~\ref{th4.7}, thus  $\lambda(M^t(P_n))=2^{t-1}(n+2)-2$.
\end{proof}
\begin{cor}\label{cor4.4}
	For $t\geq 2$,	$$\lambda(M^t(C_n))=\begin{cases}
	4\times 2^t-2 \hspace{40pt}if\,\, n=3, \\
	5\times 2^t-2 \hspace{40pt}if\,\, n=4, \\
	6\times 2^t-2 \hspace{40pt}if\,\, n=5, \\
	2^{t-1}(n+2)-2 \hspace{15pt}if\,\,n \geq 6.	
	\end{cases}$$	
\end{cor}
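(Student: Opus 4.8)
The plan is to separate the small cases $n=3,4,5$ from the generic case $n\geq 6$, exactly as in the treatment of $M^t(P_n)$ in Corollary~\ref{cor4.3}. For the small cases I would simply invoke Theorem~\ref{th4.4}: since $C_3\cong K_3$ and $diam(C_4)=diam(C_5)=2$, that theorem gives $\lambda(M^t(C_n))=2^t(n+1)-2$ directly, which evaluates to $4\times 2^t-2$, $5\times 2^t-2$, and $6\times 2^t-2$ for $n=3,4,5$ respectively. No construction is needed here.

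For $n\geq 6$ the target value is the lower bound $2^{t-1}(n+2)-2$ of Theorem~\ref{th4.1}, so by Theorem~\ref{th4.7} it suffices to produce a perfect $2$-matching of $\overline{C^2_n}$ (equivalently, to verify the $D_2$-condition). Rather than checking the inequality $|D_2(S)|\geq|S|$ for all $S$, I would exhibit the $2$-matching explicitly. The starting observation is that $v_iv_j\in E(\overline{C^2_n})$ exactly when $d_{C_n}(v_i,v_j)\geq 3$; hence for $n\geq 6$ every chord $v_iv_{i+3}$ (indices modulo $n$), for which $d_{C_n}(v_i,v_{i+3})=\min(3,n-3)=3$, is an edge of $\overline{C^2_n}$. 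I would then let $H$ be the spanning subgraph of $\overline{C^2_n}$ formed by all such chords. This $H$ is the circulant with connection set $\{\pm 3\}$ and decomposes into $\gcd(n,3)$ vertex-disjoint cycles, each of length $n/\gcd(n,3)$.

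The remaining work is a brief case analysis on $n\,(\text{mod }3)$. If $n\not\equiv 0\,(\text{mod }3)$, then $H$ is a single Hamiltonian cycle; if $n\equiv 0\,(\text{mod }3)$ and $n\geq 9$, then $H$ is a union of three cycles each of length $n/3\geq 3$. In both situations, assigning weight $1$ to every edge of $H$ produces a perfect $2$-matching. The sole degenerate value is $n=6$, where the identification $v_{i+3}=v_{i-3}$ collapses $H$ into the perfect matching $\{v_0v_3,v_1v_4,v_2v_5\}$; here I would instead assign weight $2$ to each of the three edges. Having a perfect $2$-matching of $\overline{C^2_n}$ for every $n\geq 6$, Theorem~\ref{th4.7} then yields $\lambda(M^t(C_n))=2^{t-1}(n+2)-2$.

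I expect the main obstacle to be only bookkeeping rather than any conceptual difficulty: one must confirm that the distance-$3$ chords are genuine edges of $\overline{C^2_n}$ (which is precisely what pins down the threshold $n\geq 6$) and recognize $n=6$ as the unique value where the $\{\pm 3\}$-circulant degenerates into single edges instead of proper cycles, treating it separately with weight-$2$ edges. Everything else reduces to the standard fact that a circulant on $\{\pm 3\}$ splits into $\gcd(n,3)$ equal cycles, together with the definitions of $\overline{C^2_n}$ and of a perfect $2$-matching already recorded in the excerpt.
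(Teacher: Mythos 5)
Your proposal is correct and follows essentially the same route as the paper: Theorem~\ref{th4.4} disposes of $n=3,4,5$ via $C_3\cong K_3$ and $diam(C_4)=diam(C_5)=2$, and Theorem~\ref{th4.7} handles $n\geq 6$. The only difference is that you explicitly verify the hypothesis of Theorem~\ref{th4.7} by exhibiting a perfect $2$-matching of $\overline{C^2_n}$ via the $\{\pm 3\}$-circulant (correctly isolating $n=6$ as the degenerate case needing weight-$2$ edges), whereas the paper simply asserts that $C_n$ satisfies the condition.
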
 
\begin{proof}
	We have $diam(C_3)=1$, and  $diam(C_4)=diam(C_5)=2$. So by Theorem~\ref{th4.4}, for $t\geq 2$, we have $\lambda(M^t(C_3))=4\times 2^t-2$, $\lambda(M^t(C_4))=5\times 2^t-2$, and $\lambda(M^t(C_5))=6\times 2^t-2$. If $n\geq 6$, the cycle $C_n$ satisfies the condition of Theorem~\ref{th4.7}, then $\lambda(M^t(C_n))=2^{t-1}(n+2)-2$.
\end{proof}
\begin{cor}\label{cor4.5}
	Let $G$ be a connected graph, for $t\geq 2$ we have \\
	$1)$	$\lambda(M^t(G))=3\times 2^t-2$   if and only if  $G$ is $K_2$,\\
	$2)$	$\lambda(M^t(G))=4\times 2^t-2$   if and only if  $G \in \{ P_3, P_4, P_5,P_6,C_3,C_6  \}$,\\
	$3)$	$\lambda(M^t(G))=9\times 2^{t-1}-2$   if and only if  $G \in \{ P_7,C_7 \}$.		
\end{cor}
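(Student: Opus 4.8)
The plan is to follow the same strategy as in the proof of Corollary~\ref{cor3.6}: use the lower bound of Theorem~\ref{th4.1} to force the maximum degree of $G$ to be small, and then read off the answer from the explicit formulas for paths and cycles in Corollary~\ref{cor4.3} and Corollary~\ref{cor4.4}, together with the value $\lambda(M^t(K_2))$ supplied by Theorem~\ref{th4.2}.

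First I would observe that for any connected graph $G$ of maximum degree $\bigtriangleup$ and every $t\geq 2$, the lower bound in Theorem~\ref{th4.1} gives
\[
\lambda(M^t(G))\geq 2^{t-1}\cdot 2(\bigtriangleup+2)-2 = 2^t(\bigtriangleup+2)-2.
\]
Hence if $\bigtriangleup\geq 3$ then $\lambda(M^t(G))\geq 5\cdot 2^t-2 = 10\cdot 2^{t-1}-2$, which already exceeds each of the three target values $3\cdot 2^t-2$, $4\cdot 2^t-2=8\cdot 2^{t-1}-2$, and $9\cdot 2^{t-1}-2$. Consequently, in all three parts the graph $G$ must satisfy $\bigtriangleup\leq 2$, so $G$ is either $K_2$ (when $\bigtriangleup=1$) or a path $P_n$ or a cycle $C_n$ with $n\geq 3$ (when $\bigtriangleup=2$).

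Next I would dispose of the three values one by one. For $\bigtriangleup=1$, Theorem~\ref{th4.2} gives $\lambda(M^t(K_2))=2^t(2+1)-2=3\cdot 2^t-2$, while the lower bound above shows that no path or cycle reaches $3\cdot 2^t-2$ (they all have $\lambda(M^t(\cdot))\geq 4\cdot 2^t-2$); this settles part~$1)$. For $\bigtriangleup=2$ I would read off the candidates directly from Corollary~\ref{cor4.3} and Corollary~\ref{cor4.4}. For the value $4\cdot 2^t-2=2^{t-1}\cdot 8-2$: among paths, the small cases $n\in\{3,4,5\}$ give exactly $4\cdot 2^t-2$, while for $n\geq 6$ the formula $2^{t-1}(n+2)-2$ equals $4\cdot 2^t-2$ only for $n=6$, yielding $P_3,P_4,P_5,P_6$; among cycles, $C_3$ gives $4\cdot 2^t-2$ whereas $C_4,C_5$ give $5\cdot 2^t-2$ and $6\cdot 2^t-2$, and for $n\geq 6$ the equality again forces $n=6$, yielding $C_3,C_6$. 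This produces the list $\{P_3,P_4,P_5,P_6,C_3,C_6\}$ of part~$2)$. For the value $9\cdot 2^{t-1}-2$: none of the small paths or cycles ($P_{3,4,5}$, $C_{3,4,5}$) attains it, and for $n\geq 6$ the equation $2^{t-1}(n+2)-2=9\cdot 2^{t-1}-2$ has the unique solution $n=7$, giving exactly $P_7$ and $C_7$ as in part~$3)$.

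Finally, the converse (``if'') direction of each equivalence is immediate, since for every graph in the three lists the asserted value of $\lambda(M^t(G))$ is exactly what Theorem~\ref{th4.2}, Corollary~\ref{cor4.3}, and Corollary~\ref{cor4.4} provide. As the computational content is already carried by those earlier results, the only genuine work is the degree-reduction step and the bookkeeping of which orders realize each target. The mildly delicate point is the third value $9\cdot 2^{t-1}-2$, which is not of the form $c\cdot 2^t-2$; comparing all candidate values on the common scale $2^{t-1}$ shows that it lies strictly between $8\cdot 2^{t-1}-2$ (the common value of $P_{3,4,5},P_6,C_3,C_6$) and $10\cdot 2^{t-1}-2$ (the value of $C_4$), so that among connected graphs with $\bigtriangleup\leq 2$ only $P_7$ and $C_7$ can realize it.
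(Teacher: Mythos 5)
Your proposal is correct and follows essentially the same route as the paper: both use the lower bound $\lambda(M^t(G))\geq 2^{t-1}\max(n+2,2(\bigtriangleup+2))-2$ from Theorem~\ref{th4.1} to force $\bigtriangleup\leq 2$, and then read off the realizable values from Theorem~\ref{th4.2}, Corollary~\ref{cor4.3}, and Corollary~\ref{cor4.4}. Your write-up is in fact slightly more explicit than the paper's about the bookkeeping for the value $9\times 2^{t-1}-2$, but the argument is the same.
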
 
\begin{proof}
	We have $K_2$ is the only graph with $\bigtriangleup=1$, by Theorem~\ref{th4.2} $\lambda(M^t(K_2))=3\times 2^t-2$. From the lower bound of Theorem~\ref{th4.1}, for $t\geq2$, $\lambda(M^t (G)) \geq 2^{t-1}max(n+2, 2(\bigtriangleup+2))-2$.  Then if $\bigtriangleup\geq 2$ we have $\lambda(M^t (G))\geq 4\times 2^t-2$. Also if $\bigtriangleup\geq 3$, we have $\lambda(M^t (G))\geq 5\times 2^t-2$. Since the graphs in  Corollary~\ref{cor4.3} and Corollary~\ref{cor4.4} are the only connected graphs with $\bigtriangleup=2$, then we can conclude. 
\end{proof}
For any other non-trivial connected graph $G$ not mentioned in Corollary~\ref{cor4.5} for $t\geq 2$, we have $\lambda(M^t(G))\geq 5\times 2^t-2$. 
\section{Open problems}
From the statement of the $\bigtriangleup^2$-conjecture, and the upper bound of Theorem~\ref{th3.1} and Theorem~\ref{th4.1}, we propose a weaker conjecture for the $L(2,1)$-labeling number of the Mycielski and the iterated Mycielski of graphs.
\begin{con}
	For any graph $G$ of order $n\geq 1$, with maximum degree $\bigtriangleup$, and for all $t\geq 1$, we have $\lambda(M^t (G))\leq  (2^t-1)(n+1)+\bigtriangleup^2$.
\end{con}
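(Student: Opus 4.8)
The plan is to use the recursive upper bound as the backbone and reduce the conjecture, for the bulk of graphs, to the classical $\bigtriangleup^2$-conjecture for the \emph{base} graph, then to remove by direct and structural arguments the regimes where that reduction is either unnecessary or too lossy. First I would record the key reduction: iterating Theorem~\ref{th3.1} exactly as in the proof of Theorem~\ref{th4.1} gives, for every $t\ge 1$,
\[
\lambda(M^t(G))\le (2^t-1)(n+1)+\lambda(G).
\]
Hence the conjecture holds for $G$ as soon as $\lambda(G)\le \bigtriangleup^2$; that is, it follows from the $\bigtriangleup^2$-conjecture applied to $G$ itself. This immediately settles every case in which the $\bigtriangleup^2$-conjecture is known: graphs of large maximum degree (Havet et al.~\cite{havet}) and, via Theorem~\ref{th2.1}, all diameter-two base graphs; and for $\bigtriangleup=0$ one has $\lambda(G)=0=\bigtriangleup^2$. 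Note, however, that the reduction is lossy (e.g.\ for $K_2$ one has $\lambda(G)=2>\bigtriangleup^2=1$ yet equality still holds in the conjecture by Theorem~\ref{th4.2}), so the target is genuinely weaker than the $\bigtriangleup^2$-conjecture and there is room to prove it unconditionally in many cases.

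Second, I would exploit the large gap between the two sides to eliminate whole families. For $t\ge 2$ the trivial maximum bound $\lambda(M^t(G))\le |M^t(G)|-1=2^t(n+1)-2$, valid since $G\subseteq K_n$ (Theorem~\ref{th4.2}, Theorem~\ref{th4.4}, Observation~\ref{obs4.1}), already gives the claim whenever $n\le \bigtriangleup^2+1$, because $2^t(n+1)-2\le (2^t-1)(n+1)+\bigtriangleup^2$ is equivalent to $n-1\le\bigtriangleup^2$. It remains to treat sparser graphs with $n\ge\bigtriangleup^2+2$. For these $G^2$ has maximum degree at most $\bigtriangleup^2$, so $\overline{G^2}$ has minimum degree at least $n-1-\bigtriangleup^2$, while any independent set of $\overline{G^2}$ (a clique of $G^2$) has size at most $\bigtriangleup^2+1$; a short Tutte-condition count (Theorem~\ref{th4.6}) then shows that $\overline{G^2}$ has a perfect $2$-matching once $n\ge 2\bigtriangleup^2+2$, whereupon Theorem~\ref{th4.7} gives $\lambda(M^t(G))=2^{t-1}(n+2)-2$, comfortably below the target (the slack is $n(2^{t-1}-1)+1+\bigtriangleup^2\ge 0$). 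More generally, Theorem~\ref{th4.8} turns any lower bound on $\nu_2(\overline{G^2})$ into a usable upper bound, so the outstanding work concentrates on graphs whose square is ``clustered.''

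Third, the edge case $\bigtriangleup=1$ and graphs with isolated vertices must be checked by hand, since there $\lambda(G)=2>1=\bigtriangleup^2$ and the first-step reduction is off by one. Here $G$ is a matching together with isolated vertices: $G=K_2$ is covered with equality by Theorem~\ref{th4.2}, and $G=mK_2$ with $m\ge 2$ falls under Theorem~\ref{th4.7} because $\overline{G^2}$ is then the cocktail-party graph, which has a perfect $2$-matching. The presence of isolated vertices is the only delicate subcase and I would handle it separately, splitting off the star component $K_{1,n}$ produced by the first Mycielski step and labelling it independently.

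The hard part is the intermediate band, roughly $\bigtriangleup^2+2\le n\le 2\bigtriangleup^2+1$ with $\overline{G^2}$ violating the $2$-matching condition, where neither the trivial maximum bound nor the structural bounds of Theorems~\ref{th4.7} and~\ref{th4.8} are strong enough and the reduction throws us back onto $\lambda(G)\le\bigtriangleup^2$. For $t=1$ there is no slack in Theorem~\ref{th3.1}, so this is essentially as hard as the $\bigtriangleup^2$-conjecture; the one independent lever is Theorem~\ref{th3.4}, which pins $\lambda(M(G))$ exactly in terms of $i_4(\overline{G})$ and which I would try to relate to $\bigtriangleup^2$ directly. For $t\ge 2$, by contrast, the recursion leaves an unused slack of order $(2^{t-1}-1)(n+1)$, and the more promising route is to recurse only down to $M^2(G)$ and bound $\lambda(M^2(G))$ through $\nu_2(\overline{G^2})$ via Theorem~\ref{th4.8}; making this slack dominate the deficit $n-1-\bigtriangleup^2$ in the clustered regime is, I expect, exactly where the real difficulty lies.
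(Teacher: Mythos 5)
This statement is posed in the paper as a \emph{conjecture} in the ``Open problems'' section: the paper offers no proof of it, only the observation (which you reproduce as your first step) that $\lambda(M^t(G))\leq (2^t-1)(n+1)+\lambda(G)$ by iterating Theorem~\ref{th3.1}, so that the bound would follow from the $\bigtriangleup^2$-conjecture applied to $G$. Your proposal is therefore not competing with a proof in the paper, and, more importantly, it is not itself a proof. Your own summary concedes this: the ``intermediate band'' $\bigtriangleup^2+2\leq n\leq 2\bigtriangleup^2+1$ with $\overline{G^2}$ lacking a perfect $2$-matching is left open, and for $t=1$ you state explicitly that the problem there is ``essentially as hard as the $\bigtriangleup^2$-conjecture.'' Since the $\bigtriangleup^2$-conjecture is itself open (Havet et al.\ prove it only for large $\bigtriangleup$), the reduction does not close the argument. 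What you have is a correct and somewhat sharper map of which regimes are already settled unconditionally --- the observation that $n\leq\bigtriangleup^2+1$ is covered for $t\geq 2$ by the trivial bound $2^t(n+1)-2$, and that $n\geq 2\bigtriangleup^2+2$ forces a perfect $2$-matching of $\overline{G^2}$ via Theorem~\ref{th4.6} and hence the exact value $2^{t-1}(n+2)-2$ via Theorem~\ref{th4.7}, are both sound and go beyond what the paper records --- but a map of the remaining difficulty is not a resolution of it.

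Two concrete slips beyond the main gap. First, your treatment of $\bigtriangleup=1$ is incomplete for $t=1$: Theorems~\ref{th4.2} and~\ref{th4.7} apply only to $t\geq 2$, while for $G=mK_2$ with $m\geq 2$ Theorem~\ref{th3.1} gives only $\lambda(M(G))\leq n+3$ and Theorem~\ref{th3.4} gives only $2n$, both exceeding the conjectured $n+2$; one can verify $\lambda(M(2K_2))\leq 6$ by hand, but none of the tools you invoke deliver it. Second, Theorem~\ref{th4.7} requires $G$ to have no isolated vertices, so the $n\geq 2\bigtriangleup^2+2$ regime still needs the isolated-vertex split you only sketch. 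Neither of these is fatal to the plan as a plan, but they reinforce that the proposal should be read as a partial reduction of an open conjecture, not as a proof of the statement.
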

It is clear from Theorem~\ref{th3.1} and Theorem~\ref{th4.1} that if $\lambda(G)\leq \bigtriangleup^2$, then for any $t\geq 1$, $\lambda(M^t (G))\leq  (2^t-1)(n+1)+\bigtriangleup^2$, also if it is true for an iteration $t$ then it is for any iteration greater. From our study, for any $t\geq 1$, the only graphs with at least one edge that we know having $\lambda(M^t (G))=(2^t-1)(n+1)+\bigtriangleup^2$, are the graph $K_2$, and the graphs achieving the bound in Corollary~\ref{cor3.1}, which are the cycle $C_5$, the Petersen graph, the Hoffman-Singleton graph, and possibly a diameter two Moore graph of maximum degree $57$, and order $57^2+1$ if such graph exists.  
\par The complexity of the $L(2,1)$-labeling problem for the Mycielski of graphs should be more investigated, whether for general graphs or the Mycielski of graphs not still studied. For instance, trees since the $L(2,1)$-labeling number can be determined in polynomial time for trees \cite{changkuo}, we may ask if it is also the case for the Mycielski graphs generated from trees?

\end{document}